\documentclass[10pt, reqno]{amsart}

%---------- packages ----------%
\usepackage{amsthm,amsfonts,amsmath,amssymb,mathrsfs,enumitem,comment,mathtools,euler}
\usepackage[mathscr]{eucal}
\usepackage[utf8]{inputenc}
\usepackage[T1]{fontenc}
\usepackage[colorlinks=true, linktoc=page, linkcolor=webgreen, citecolor=webgreen, urlcolor=webbrown]{hyperref}
\usepackage{datetime}
\usepackage[initials, alphabetic]{amsrefs}
\usepackage[dvipsnames]{xcolor}

\usepackage{geometry}
\geometry{vscale=0.76, hscale=0.76 ,footnotesep=1cm}

\usepackage{setspace}
\setstretch{1.25}
\setlength\parindent{12pt}
\setlength\parskip{0pt}

%---------------- Titres de section etc--------------------%

%\usepackage{titlesec}%[center]
%\renewcommand\thepart{\textbf{Part \Roman{part}.}}
%\titleformat{\part}
%  {\sc  \center}
%  {\thepart}{1em}{}

%\titleformat{\section}
%  {\sc \normalsize \center}
%  {\thesection.}{1em}{}

%\titleformat{\subsection}[runin]
%  {  \normalsize }
%  {\thesubsection.}{1em}{}

%\titleformat{\subsubsection}
%  {\bf \normalsize}
%  {\thesubsubsection.}{1em}{}  %

%%%%%%%%%%%%%%%%%%%%%%%%%%%%%%%%%%%%%%%%%%%%%%%%%%%%%%%%%%%%%%%%%%%%%%%%%%%%%%%%%%%%%%%%%%%%%%%%%%%%%%%%%%%%%%%%%%

\definecolor{webgreen}{rgb}{0,.4,0}
\definecolor{webbrown}{rgb}{.4,0,0}

\oddsidemargin=5pt
\evensidemargin=5pt
\parskip=4pt
\parindent=20pt
\addtolength{\headheight}{15pt}

\renewcommand{\dateseparator}{, }
\newcommand{\todaymy}{\shortmonthname ~ {\the\day}\dateseparator \the\year}

\newcommand{\Sn}[2]{
\begin{Bmatrix}
#1 \\ #2
\end{Bmatrix}
}

\makeatletter
\def\blfootnote{\xdef\@thefnmark{}\@footnotetext}
\makeatother

\newtheorem{Thm}{Theorem}[section]
\newtheorem{Lm}[Thm]{Lemma}
\newtheorem{Prop}[Thm]{Proposition}
\newtheorem{Cor}[Thm]{Corollary}
\newtheorem*{state}{Theorem}

\theoremstyle{definition}
\newtheorem*{ack}{Acknowledgements}
\theoremstyle{remark}
\newtheorem{Def}[Thm]{Definition}
\newtheorem{Not}[Thm]{Notation}

\newtheorem{Rem}[Thm]{Remark}
\newtheorem{convention}[Thm]{Convention}
\newtheorem{Exp}[Thm]{Example}
\numberwithin{equation}{section}

\newcommand{\<}{\langle}
\renewcommand{\>}{\rangle}

\DeclareMathOperator{\Li}{Li}
\DeclareMathOperator{\Arg}{Arg}

\DeclareMathOperator{\Real}{Re}
\DeclareMathOperator{\Imag}{Im}
\DeclareMathOperator{\Res}{Res}
\DeclareMathOperator{\Reg}{Reg}
\DeclareMathOperator{\Sing}{Sing}
\DeclareMathOperator{\id}{\mathbf{1}}
\DeclareMathOperator{\del}{\mathbf{d}}
\DeclareMathOperator{\E}{\mathbf{E}}
\DeclareMathOperator{\Sc}{\mathbf{S}}
\DeclareMathOperator{\T}{\mathbf{T}}

\def\H{\mathscr H}
\def\L{\mathscr L}

\def\D{\mathscr D}
\def\Tcal{\mathscr T}
\def\Del{\boldsymbol{\Delta}}
\def\Di{\mathbf{D}}
\def\Re{\mathbb R}
\def\Rat{\mathbb Q}

\def\Co{\mathbb C}

\def\Z{\mathbb Z}
\def\B{\mathbf{B}}
\def\A{\mathbf{A}}

\def\bb{\mathbf{b}}
\def\ba{\mathbf{a}}
\def\bc{\mathbf{c}}
\def\be{\mathbf{e}}
\def\dt{\mathbf{dt}}

\def\eps{\varepsilon}

\begin{document}
\title[]{Bernoulli Operators and Dirichlet Series}
\author[]{Bogdan Ion}
%\blfootnote{Department of Mathematics, University of Pittsburgh, Pittsburgh, PA 15260}
%\blfootnote{E-mail: bion@pitt.edu}
%\thanks{Work partially supported by Simons Foundation grant 420882.}
\address{Department of Mathematics, University of Pittsburgh, Pittsburgh, PA 15260}
\email{bion@pitt.edu}
\date{}
\subjclass[2010]{30B40, 30B50, 11M41}
\blfootnote{Version: May 12, 2020
%\todaymy
}

\begin{abstract}
We introduce and study some (infinite order) discrete derivative operators called  Bernoulli operators. They are associated to a class of power series (tame power series), which include power series that converge in the unit disk, have at most a pole singularity at $z=1$, and have analytic continuation to the unit disk centered at $z=1$ with possible isolated singularities of Mittag-Leffler type. We show that they all naturally act on, and take values into, the vector space of functions $f(s,t)$ in the image of the Laplace-Mellin transform that have (single valued) analytic continuation to the complex plane with possible isolated singularities. For $s$ in some right half-plane the action of the Bernoulli operator is given by a Dirichlet-type series and, as a consequence, such series acquire analytic continuation to the complex plane and allow a precise description of the singularities. For the particular case of $f(s,t)=t^s$, the action of the Bernoulli operators provide the analytic continuation of the Dirichlet series associated to tame power series. In this case, we record detailed information about the location of poles, their resides, and special values, as well as prove the uniqueness of tame Dirichlet series with specified poles, residues, and special values.
\end{abstract}

\maketitle

\thispagestyle{empty}

%%%%%%%%%%%%%%%%%%%%%%%%%%%%%%%%%%%%%%%%%%%%%%%%%%%%%%%%%%%%%%%%%%%%%%%%%%%%%
\section{Introduction}

One of the fundamental questions in the classical analytic theory of Dirichlet series deals with the identification of their maximal domain of convergence (or absolute convergence, or uniform convergence, or boundedness and regularity). A more difficult question is that of identifying the maximal domain of holomorphy and the nature of the singularities that might arise. In general, there is no clear phenomenology identified and there are indeed examples of Dirichlet series that exhibit a whole range of behavior with respect to analytic continuation and the associated singularities. 

On the other hand, the phenomena of holomorphic or meromorphic continuation is prominent for Dirichlet series of arithmetic origin. In the general theory of arithmetic Dirichlet series (see, e.g. \cites{Kac, Per, Sel} ) the constraints that are usually brought in as proxies for arithmetic are functional equations and, perhaps even more relevant in the light of the Davenport-Heilbronn example,  Euler  products. Both types of properties are, of course, present or expected for all L-functions that arise naturally and, as revealed through a lot of work over the past decades (particularly by Kaczorowski and Perelli) the set of series satisfying these properties seems to be rather small.

An immediate obstruction brought up by imposing early on requirements on the existence of functional equations and Euler products is that the series with these properties do not  form a vector space and, at least from an analytic point of view this might be desirable if one needs to embark on more systematic investigation into the phenomena of analytic continuation. In particular, the functional equations alone are typically enough to assure the holomorphic or meromorphic continuation of the Dirichlet series but aside from the examples of arithmetic origin the extent of this phenomenon seems rather limited. 

An important class of examples is provided by Hecke's  correspondence \cites{HecUbe, BK} between modular forms and Dirichlet series. In this situation the functional equation arises from modularity and the Euler product picks up the modular functions that are eigenvectors of the Hecke operators \cite{HecEP}. Even without imposing the modularity hypothesis, it suggests a relationship between the analytic properties (analytic continuation, singularities) of a generating function $\displaystyle \alpha(z)=\sum_{n\geq 0} a_{n+1}z^n$ (in Hecke's work $z=e^{2\pi i \tau}$ with $\tau$ in the upper half-plane) and those of the associated  Dirichlet series $\displaystyle \Di_\alpha(s)=\sum_{n\geq 1} \frac{a_n}{n^s}$. 

Another key observation is the fact that $\Di_\alpha(s)$ and $\displaystyle \Di_\alpha(s,t)=\sum_{n\geq 0} \frac{a_{n+1}}{(t+n)^s}$, $t>0$, have similar behavior with respect to analytic continuation, thus
studying $\Di_\alpha(s,t)$ would not lead to different phenomena. The following result \cite{MS}*{Theorem 3.1} is relevant in this respect. 
\begin{state}
Assume that $\Di_\alpha(s)$ is absolutely convergent for $\Real(s)>1$. Then
\begin{enumerate}[label={\roman*)}]
\item If $\Di_\alpha(s)$ extends to an entire function or a meromorphic function, so does $\Di_\alpha(s,t)$, respectively;
\item If $s=s_0$ is a pole for $\Di_\alpha(s,t)$ then $s=s_0+r$ is a pole for $\Di_\alpha(s)$ for some $0\leq \lceil -s_0 \rceil +1$.
\end{enumerate}
\end{state}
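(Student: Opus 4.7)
The plan is to expand $\Di_\alpha(s,t)$ around $\Di_\alpha(s)$ by means of a generalized binomial series in $1/m$. After shifting the summation index to $m = n+1 \geq 1$ and setting $u = t - 1$, one rewrites $\Di_\alpha(s,t) = \sum_{m \geq 1} a_m (m+u)^{-s}$. Fixing an integer $M \geq |u|$, I would separate the first $M$ terms into a piece $E_M(s,t)$ that is manifestly entire in $s$, and treat the remaining tail with the Taylor expansion
$$\frac{1}{(m+u)^s} = \sum_{k=0}^{K-1} \binom{-s}{k} \frac{u^k}{m^{s+k}} + R_{K,m}(s),$$
valid for all $m > |u|$, in which the remainder satisfies $|R_{K,m}(s)| = O(m^{-\Real(s) - K})$ uniformly for $s$ in compact subsets of $\mathbb{C}$.

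Substituting the expansion into the tail and using $\sum_{m > M} a_m m^{-(s+k)} = \Di_\alpha(s+k) - \sum_{m \leq M} a_m m^{-(s+k)}$, I arrive at the identity, valid initially for $\Real(s) > 1$,
$$\Di_\alpha(s,t) = E_M(s,t) + \sum_{k=0}^{K-1} \binom{-s}{k} u^k \, \Di_\alpha(s+k) + \rho_K(s,t),$$
where $\rho_K(s,t) := \sum_{m > M} a_m R_{K,m}(s)$. The hypothesis of absolute convergence of $\Di_\alpha(s)$ on $\Real(s) > 1$ together with the remainder bound shows that $\rho_K$ is absolutely convergent and holomorphic on the half-plane $\Real(s) > 1 - K$. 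Since $K$ can be taken arbitrarily large and the binomial coefficients $\binom{-s}{k}$ are polynomials in $s$, this representation propagates the analytic character of $\Di_\alpha(s)$ to $\Di_\alpha(s,t)$: entire (respectively meromorphic) continuation on one side forces entire (respectively meromorphic) continuation on the other, which proves (i). For (ii), given a pole $s_0$ of $\Di_\alpha(s,t)$, I would choose $K = \lceil -\Real(s_0) \rceil + 1$, ensuring $\rho_K$ is holomorphic at $s_0$; since the polynomial factors contribute no singularity, the pole of the left-hand side must come from at least one $\Di_\alpha(s+k)$ with $0 \leq k \leq K-1$, so $\Di_\alpha$ has a pole at $s_0 + r$ for some $r$ in the claimed range.

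The main technical obstacle is the uniform control of the Taylor remainder $R_{K,m}(s)$: one needs a bound that is strong enough to secure absolute convergence of $\rho_K$ on the entire half-plane $\Real(s) > 1 - K$, not merely locally. This is accessible through the integral form of the Taylor remainder of $(1+x)^{-s}$ at $x = u/m$, using that $|(1+\xi)^{-s-K}|$ is bounded for $\xi$ in the segment between $0$ and $u/m$ once $m > |u|$. A secondary point to check is that the construction is independent of the choice of $M$: different admissible $M$ produce continuations that differ by entire functions on the overlap and therefore agree, so the extension is unambiguous.
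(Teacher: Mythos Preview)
The paper does not give its own proof of this statement; it is quoted verbatim from \cite{MS}*{Theorem 3.1} and used only as motivation in the introduction. Your argument via the binomial expansion of $(m+u)^{-s}$ with explicit Taylor remainder is exactly the standard proof, and is essentially the one in Murty--Sinha. Two small points worth tightening: first, your displayed identity for $\Di_\alpha(s,t)$ silently drops the entire correction $-\sum_{k=0}^{K-1}\binom{-s}{k}u^k\sum_{m\le M}a_m m^{-(s+k)}$ coming from the tail-versus-full-series substitution; this is harmless since it is entire, but should be said. Second, your choice $K=\lceil -\Real(s_0)\rceil+1$ can fail the strict inequality $\Real(s_0)>1-K$ when $\Real(s_0)$ is a negative integer (e.g.\ $\Real(s_0)=-3$ gives $K=4$ but you need $K\ge 5$); taking $K=\lceil 1-\Real(s_0)\rceil+1$ or simply ``$K$ large enough'' fixes this, and the stated range for $r$ in the theorem (which already has a visible typo, the missing ``$r\le$'') is recovered.
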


For us, the deformation with respect to the $t$ parameter will play a role; for example, it will enable us to differentiate with respect to $t$, and extend the focus from studying the analytic continuation of elements in the image of the Mellin transform to those in the image of the Laplace-Mellin transform.

We propose the study of the vector space  $\H$ consisting of functions $f(s,t)$, $(s,t)\in \Co\times \Re_+$, which arise as the (single valued) analytic continuation to $s\in \Co$, with possible isolated singularities, of functions of the type $$\frac{1}{\Gamma(s)}\int_{0}^\infty e^{-tu}\varphi(u)u^{s-1}du,\quad \Real(s)>1$$ 
with $\varphi(u)$ satisfying some mild technical condition. We refer to Definition \ref{def-H} for the details. The vector space $\H$ contains  trivial elements such as $t^s$ (corresponding to $\varphi(u)=1$) but it also contains elements corresponding to any  Dirichlet series that admits analytic continuation satisfying the conditions in the definition. It turns out that the elements of $\H$ necessarily have some appealing properties; for example  $\H$ is closed under derivation with respect to $t$ and the set of singularities of an element of $\H$ must be stable under translation with respect to $\Z_{\geq 0}$. Also, if $n\in \Z_{\geq 0}$ is not a singular point then $f(n,t)$ is a polynomial in $t$ of degree at most $n$. 

On the other hand, we consider $\Tcal$ the vector space of tame power series (see  Definition \ref{def-ts}): power series $\alpha(z)$ convergent in the unit disk, with $z=1$ at most a pole (of order $\nu\geq 0$), and such that $(z-1)^\nu \alpha(z)$ has a muti-power series expansion (see Definition \ref{def-mps}) around $z=1$  that converges absolutely and uniformly in a neighborhood of the interval $(0,1]$. A large class of examples of such series consists of analytic functions which, aside from being holomorphic in the unit disk with at most a pole singularity at $z=1$, have at most isolated singularities of Mittag-Leffler type in the disk of radius $1$ centered at $z=1$. It appears possible that $\Tcal$ can be extended (through the use of Puisseux series) to include series that have a possible algebraic branch point at $z=1$.

To any tame $\alpha(z)$ we associate a discrete difference operator $\B_\alpha$ (of infinite order) that we call  Bernoulli operator. Our motivation for considering such operators stems from considerations related to the (group theory) renormalization of divergent series \cite{CI}. The first occurrence of such an operator and its use to describe the analytic continuation of the Riemann zeta function goes back to Hasse \cite{HasEin} whose inspiration seems to have been Worpitzky's formula \cite{WorStu} for Bernoulli numbers. A similar relationship with the zeta function was established earlier by Ser \cite{SerSur}; the equivalence of the formulas of Hasse and Ser is presented in \cite{BlaThr}*{Theorem 1}. 

Our main result, Theorem \ref{t2m}, shows that $\H$ lies inside the domain of any Bernoulli operator and, in fact,  we have $\B_\alpha: \H\to \H$. This endows $\H$ with a $\Tcal$-module structure. Furthermore, $f(s,t)$ and $\B_\alpha f(s,t)$ have the same singular locus. The connection between the action of the operator $\B_\alpha$ and the Dirichlet-type series 
$$
\Di_\alpha^f(s,t)=\sum_{n=0}^\infty {a_{n+1}}{f(-s,t+n)}
$$
is the following. For $f(s,t)\in \H$ for which $\varphi(u)=o(u^\gamma)$, $u\to 0^+$, for some $\gamma>-1$ we have that  $\B_\alpha f(-s,t)$ is the analytic continuation of 
$$
s\cdot(s+1)\cdots(s+\nu-1)\Di_\alpha^f(s+\nu,t).
$$
This shows, in particular, that the analytic continuation for $\Di_\alpha^f(s,t)$ stems directly from the same property for $f(s,t)$ and in fact $\Di_\alpha^f(s,t)$ can only acquire at most $\nu$ simple poles (at fixed locations) on top of the singular locus of $f(s,t)$. The values of $\Di_\alpha^f(s,t)$ at negative integers (which are not singular points) can be computed rather directly because $f(n,t)$ are polynomials and on polynomial functions of bounded degree $\B_\alpha$ acts as a finite order difference operator. In fact, on polynomial functions, the action of $\B_\alpha$ matches the action of another operator $\T_\alpha(-\del)$ associated to $\alpha(z)$ which we call Todd operator. The Todd operator is an infinite order differential operator obtained by substituting the derivative $\del$ with respect to $t$ for $u$ in the Taylor expansion around $u=0$ for $(-u)^\nu\alpha(e^u)$. On polynomial functions of bounded degree $\T_\alpha(-\del)$ acts as a differential operator of finite order and its direct relationship with $\alpha(z)$ makes the computation of its action straightforward. The terminology is justified by fact that for $\alpha(z)=1/(1-z)$ the operator $\T_\alpha(\del)$ is precisely the Todd operator in \cites{KhPu, BV} that appears in the context of the combinatorial Riemann-Roch theorem and lattice point counting in simple polytopes.

In Section \ref{dirichlet} we provide more comprehensive details of the consequences of these results for $t^s\in \H$. In this case, $\Tcal\cdot t^s$ consists of Dirichlet series associated to tame power series. Their analytic continuation is a direct consequence of the fact that $t^s$ is entire. In this case, $\Di_\alpha(s,t)$ is meromorphic with possible simple poles at $s=1,2,\dots,\nu$. Theorem \ref{t3} and Corollary \ref{cor-t3} provide full details on which poles arise (depending on the value of $t$), their residues, and the special values $\Di_\alpha(-n,t)$, $n\geq 0$. All information can be read from the Bernoulli polynomials defined as $B_\alpha[n,t]=\B_\alpha t^n=\T_\alpha(-\del) t^n$. In particular,  the exponential generating function for the Bernoulli polynomials is shown to be
$$
\sum_{n=0}^{\infty} B_\alpha[n;t]\frac{u^n}{n!}=(-u)^\nu \alpha(e^u)e^{tu}.
$$
We show in Theorem \ref{t6} that all these results hold without the hypothesis that $\alpha(z)$ has a multi-power series expansion around $z=1$. For the computation of the special values we rely on \cite{EveMer} where in fact  more detailed results on the analytic continuation of $\Di_\alpha(s)$ have been obtained under some technical hypothesis, in a fashion that also allows the computation of poles, residues, and special values. We refer to \S\ref{ow} for some of the details.

One interesting outcome of this analysis is a uniqueness theorem (within $\Tcal\cdot t^s$) for Dirichlet series. As it turns out,  $\Di_\alpha(s,t_0)$ is uniquely determined by a finite subset of $\Z_{>0}$ (the location of the simple poles), a corresponding set of non-zero complex numbers (the residues at those poles), and a sequence of complex numbers (the special values $\Di_\alpha(-n,t_0)$). We refer to Theorem \ref{t5} for the details. The question of existence of a (unique) Dirichlet series determined by this data, points to a possible converse theorem that is quite different from the classical converse theorems for L-functions \cites{Ham1, Ham2, Ham3, HecUbe, Wei, KMP} but closer to the Beurling converse theorem \cite{Beu} for the zeta function (see also \cite{Dix}). It also suggests that the information that is typically sought for from an L-function (poles, residues, special values) is sufficient to identify it  uniquely. We conclude the article with a set of old and new examples illustrating our results (Section \ref{sec-exp}) and a discussion of what can be expected from power series $\alpha(z)$ with different types of singularities at $z=1$.

The $\Tcal$-module structure on $\H$ (or possible variations) seems to allow for some perspective into the phenomena of formation of singularities in analytic continuation. It is also amenable to the multi-variable situation and, in particular, to the corresponding phenomena in the context of analysis on symmetric cones. We hope to pursue some of these directions in the future.

 \begin{ack}
 I thank Gunduz Caginalp for his questions and discussions on the regularization of divergent series and for his general interest in this work. I thank Jeffrey Lagarias for bringing to my attention and making available the 2018 University of Michigan dissertation of Corey Everlove \cite{EveMer} which, among other things, contains some general results about analytic continuation of Dirichlet series. The relevant facts are briefly discussed in \S\ref{ow}. This work was partially supported by the Simons Foundation grant 420882. 
 \end{ack}
 
%%%%%%%%%%%%%%%%%%%%%%%%%%%%%%%%%%%%%%%%%%%%%%%%%%%%%%%%%%%%%%%%%%%%%%%%%%%%%
\section{Notation}

\subsection{} Throughout, we reserve $u$ and $t$ to denote real variables with domain $\Re_{+}=(0,\infty)$; accordingly, $du$ and $dt$ refer to the Lebesgue measure on $\Re_{+}$. All integrals with respect to these measures are Lebesgue integrals of real or complex-valued measurable functions. All spaces of functions that will be considered, in particular the domains of all operators, are based on functions on $\Re_+$. Similarly, we reserve $z$ and $s$ to denote complex variables with domain $\Co$, unless otherwise specified. We will make use of the Gamma function $\Gamma(s)$ and the falling and raising factorials $s^{\underline{n}}=\Gamma(s+1)/\Gamma(s-n+1)$, $s^{\overline n}=\Gamma(s+n)/\Gamma(s)$, $n\in \Z$.

\subsection{}  For integers $0\leq k\leq n$ let $\Sn{n}{k}$ denote the corresponding Stirling number of the second kind. They count the number of set partitions of the set $\{1,2,\dots,n\}$ into $k$ (non-empty) parts. For us, their relevance rests on their occurrence in the Fa\` a di Bruno formula for the composition of the formal series 
$$
\varphi(u)=\sum_{n\geq 0} \frac{\varphi_n}{n!}u^n \quad \text{and} \quad e^u-1=\sum_{n\geq 1} \frac{1}{n!}u^n.
$$
More precisely, 
\begin{equation}\label{eq14}
\varphi(e^u-1)=\sum_{n\geq 0} \frac{\psi_n}{n!}u^n, \quad \text{where}\quad \psi_0=\varphi_0 ~\text{and}~ \psi_n=\sum_{k=1}^n \Sn{n}{k}\varphi_k, ~n\geq 1.
\end{equation}
We refer to \cite{StaEnu}*{Ch. 5, Thm 5.1.4} for the proof of the composition formula in full generality.

\subsection{} 
For $s_0\in \Co$ we denote left open half space, the right open half-space, and the right closed half-space determined by the line $\Real(s)=\Real(s_0)$ by
 $$
 \Co_{s_0^-}=\{s\in \Co~|~\Real(s)<\Real(s_0)\}, \quad 
 \Co_{s_0^+}=\{s\in \Co~|~\Real(s)>\Real(s_0)\}, \quad \text{and}\quad 
 \Co_{s_0}=\overline{\Co_{s_0^+}}.
 $$

\subsection{} Fix $N\in \Z_{>0}$. We will use bold letters to denote vectors in $\Co^N$. The components of $\mathbf{z}\in \Co^N$ are denoted by $z_i$, $1\leq i\leq N$ and $|\mathbf{z}|_p$ refers to the usual $\ell^p$-norm, $1\leq p\leq \infty$. We will abreviate $|\mathbf{z}|_1$ to $|\mathbf{z}|$.
If $n\in \Z$, we denote by ${\mathbf{n}}$ the vector in $\Co^n$ with all components equal to $n$. We will use $\<\cdot,\cdot\>$ to denote the usual bilinear scalar product on $\Co^N$. 

\subsection{} \label{notation-vectors}
For $\mathbf{z}\in \Co^N$ and 
$\ba \in \Z_{\geq 0}^N$ we set
$$
\mathbf{z}^\ba=z_1^{a_1}\cdots z_N^{a_N}.
$$
The factors for which $a_i=0$ are not included in the product.

If $\ba$ and $\bb\in \Re^N$ and $a_i\leq b_i$ for all $1\leq i\leq N$, we denote 
$$
[\ba,\bb]=[a_1,b_1]\times\cdots\times[a_N,b_N].
$$
For $\bc \in \Z_{\geq 0}^N$, we use $[\ba,\bb]^\bc$ to refer to 
$$
[a_1,b_1]\times \cdots \times[a_1,b_1]\times\cdots \times [a_N,b_N]\times \cdots\times[a_N,b_N]
$$
where the interval $[a_i, b_i]$ appears $c_i$ times. The factors for which $c_i=0$ are not included in the direct product. 
%%%%%%%%%%%%%%%%%%%%%%%%%%%%%%%%%%%%%%%%%%%%%%%%%%%%%%%%%%%%%%%%%%%%%%%%%%%%%
\subsection{} For $\ba=(a_1,\dots,a_N)\in \Co^N$ and $ \mathbf{r}=(r_1,\dots,r_N)\in \Re^N$, denote by 
$D(\ba,\mathbf{r})$ the open polydisk 
$$
\{
(z_1,\dots,z_N)~|~|z_i-a_i|<r_i, ~1\leq i\leq N 
\}.
$$

A function $F(\mathbf{z})$ which is holomorphic around a point $\mathbf{z_0}$ of its domain has a power series expansion 
$$
F(\mathbf{z})=\sum_{\mathbf{i}\in \Z^{N}_{\geq 0}} c_{\mathbf{i}} (\mathbf{z}-\mathbf{z_0})^\mathbf{i}
$$
which converges absolutely and uniformly in a polydisk $D(\mathbf{z_0},\mathbf{r})$. For $N\geq 2$, the largest domain of convergence of a power series is typically larger than a polydisk (a logarithmically convex Reinhardt domain). We denote by $|F|(\mathbf{z})$ the series given by the absolute value of the terms in $F(\mathbf{z})$.
%%%%%%%%%%%%%%%%%%%%%%%%%%%%%%%%%%%%%%%%%%%%%%%%%%%%%%%%%%%%%%%%%%%%%%%%%%%%%

\subsection{} To a sequence $(a_n)_{n\geq 1}$ of complex numbers, we associate the generating series $$\alpha(z)=\sum_{n=0}^\infty a_{n+1}z^n$$ and the Dirichlet series
$$
\Di_\alpha(s,t)=\sum_{n=0}^\infty \frac{a_{n+1}}{(t+n)^s}.
$$
The corresponding classical Dirichlet series 
$$
\Di_\alpha(s)=\sum_{n=1}^\infty \frac{a_{n}}{n^s}
$$
is obtained as  $\Di_\alpha(s,1)$.

From the general theory of Dirichlet series, the domain of absolute convergence of $\Di_\alpha(s)$ with respect to $s$ is not empty if and only if  $a_n=O(n^c)$ for some $c>0$. In such a case, the radius of convergence of $\alpha(z)$ is at least $1$. Therefore, for the purpose of studying the Dirichlet series associated to $(a_n)_{n\geq 1}$ we should restrict to sequences for which the series $\alpha(z)$ converges in $D(0,1)$.

\subsection{} More generally, we can consider the series 
$$
\Di^f_\alpha(s,t)=\sum_{n=0}^\infty {a_{n+1}}{f(-s,t+n)},
$$
associated to the sequence $(a_n)_{n\geq 1}$  and a function $f(s,t)$ that is holomorphic in $s\in \Co$. We postpone  (until \S\ref{dio} and \S\ref{AC}) describing sufficient conditions on $f(s,t)$ that assure the  convergence of such series, at least for $s$ with sufficiently large real part.
%%%%%%%%%%%%%%%%%%%%%%%%%%%%%%%%%%%%%%%%%%%%%%%%%%%%%%%%%%%%%%%%%%%%%%%%%%%%%

\subsection{} We use $\id$ to denote the identity operator, 
$\del$ to denote the derivative operator $\displaystyle \frac{d}{dt}$, $\E$ to denote the (forward) shift operator 
\[
\E f(t)=f(t+1)
\]
and $\Del$ to denote the discrete (forward) derivative operator $\Del=\E-\id$,
\[
\Del f(t)=f(t+1)-f(t).
\]

\subsection{}
In fact, for any fixed  $h>0$, we consider the corresponding operators:  $\Sc_h$ the scaling operator defined as
\[
\Sc_h f(t)=f(ht),
\]
$\E_h$ the (forward) shift operator defined by 
\[
\E_h f(t)=f(t+h),
\] and the difference  operator $\Del_h=\E_h-\id$. 
Note that, for any $h>0$, we have 
\[\
\E_h=\Sc_{h^{-1}}\E \Sc_{h},\quad \text{and} \quad \Del_h=\Sc_{h^{-1}}\Del \Sc_{h}.\]
In particular, all analytic properties of $\Del_h$ are inherited from those of $\Del$.

\subsection{} Let  $\mathbf{h}\in \Re^N_{>0}$ and $\mathbf{i}\in \Z^N_{\geq 0}$. By analogy with the notation set in \S\ref{notation-vectors} let $$\E_{\mathbf{h}}=\E_{h_1}\cdots \E_{h_N}=\E_{h_1+\cdots+h_N}\quad \text{ and}\quad  \E_{\mathbf{h}}-\id=(\E_{h_1}-\id)\cdots (\E_{h_N}-\id)=\Del_{h_1}\cdots\Del_{h_N}.
$$
In particular,
$$
\Del_{\mathbf{h}}^{\mathbf{i}}=(\E_{\mathbf{h}}-\id)^{\mathbf{i}}=\Del_{h_1}^{i_1}\cdots \Del_{h_N}^{i_N}.
$$
For $\mathbf{u}\in \Re^{N}_{>0}$, $\mathbf{du}$ will refer to the Lebesgue measure $du_1\cdots du_{N}$.

%%%%%%%%%%%%%%%%%%%%%%%%%%%%%%%%%%%%%%%%%%%%%%%%%%%%%%%%%%%%%%%%%%%%%%%%%%%%%

\section{The Laplace transform}  \label{LT}

\subsection{}  Let $\mu$ denote a $\Co$-valued function of bounded variation on all intervals of the form $(0,R),~R>0$. We denote by  $\L(d\mu)$  the Laplace-Stieltjes transform of $\mu$, whose values are given by the following Riemann-Stieltjes integral 
\begin{equation}\label{eq2}
\L(d\mu)(s)=\int_0^\infty e^{-su}d\mu(u),
\end{equation}
for $s\in \Co$ for which the integral converges. We refer to \cite{WidLap} for a thorough treatment of the theory of the Laplace transform in this generality. For certain domains for $\L$, descriptions of the image are classically known \cite{WidLap}*{VII, \S12-17}.

%%%%%%%%%%%%%%%%%%%%%%%%%%%%%%%%%%%%%%%%%%%%%%%%%%%%%%%%%%%%%%%%%%%%%%%%%%%%%
\subsection{}
 If the integral \eqref{eq2} converges for $s_0$ then it converges locally uniformly for  $s$ in the open half-plane 
 $$
 \Co_{s_0^+}=\{s\in \Co~|~\Real(s)>\Real(s_0)\}.
 $$
Furthermore, $\L(d\mu)(s)$ is holomorphic in $\Co_{s_0^+}$ and  for any $k\geq 0$ its $k$-th derivative is given by 
\begin{equation}\label{eq2b}
\L(d\mu)^{(k)}(s)=\int_0^\infty e^{-su}(-u)^kd\mu(u).
\end{equation}
If the integral \eqref{eq2} converges absolutely for $s_0$ then it converges uniformly and absolutely in the closed half-plane 
 $ \Co_{s_0}$.

%%%%%%%%%%%%%%%%%%%%%%%%%%%%%%%%%%%%%%%%%%%%%%%%%%%%%%%%%%%%%%%%%%%%%%%%%%%%%
\subsection{}

If $\L({d\mu})(s)$ is defined and $s\in \Co_{0^+}$ then \cite{WidLap}*{II, Thm2.3a}
$$
\L(d\mu)(s)=s \int_0^\infty e^{-su}\mu(u)du-\mu(0).
$$
We will be interested in the situation for which $\L(d\mu)(s)$ is defined (at least) in the open half-plane $ \Co_{0^+}$ and without loss of generality we can consider only the Lagrange-Lebesgue transform. 

%%%%%%%%%%%%%%%%%%%%%%%%%%%%%%%%%%%%%%%%%%%%%%%%%%%%%%%%%%%%%%%%%%%%%%%%%%%%%
\subsection{}

The Laplace-Lebesgue transform (or simply the Laplace transform) $\L(\varphi)=\L(\varphi(u)du)$  of $\varphi$ is defined by 
\begin{equation}\label{eq2c}
\L(\varphi)(s)=\int_0^\infty e^{-su}\varphi(u)du
\end{equation}
for $s\in \Co$ for which the integral converges. As its domain we will consider $\D(\L)$, the $\Co$-vector space of functions $\varphi$ which are integrable on intervals $(0,R)$ for every $R>0$, and for which the integral \eqref{eq2c} is  \emph{absolutely} convergent for all $s\in \Co_{0^+}$. We denote by $\Imag(\L)$ the image of $\L$ on this domain. Note that $\varphi\in \D(\L)$ implies that $\varphi$ is locally integrable on $(0,\infty)$. Although holomorphic in $\Co_{0^+}$, we will mostly consider $\L(\varphi)$ as a function $\L(\varphi)(t)$ with $t\in\Re_+$.

\subsection{} We define the Laplace-Mellin transform of $\varphi(u)\in \D(\L)$ as the function
$$
\L(\varphi(u)u^{s-1}/\Gamma(s))(t)
$$
as a function of two arguments $(s,t)\in \Co_{1^+}\times \Re_{>0}$.  In other words, the Laplace-Mellin transform of $\varphi(u)$ for fixed $t$ is the Mellin transform of $e^{-tu}\varphi(u)/\Gamma(s)$. The Mellin transform of $\varphi(u)/\Gamma(s)$ would correspond to evaluation of the Laplace-Mellin transform at $t=0$.

%%%%%%%%%%%%%%%%%%%%%%%%%%%%%%%%%%%%%%%%%%%%%%%%%%%%%%%%%%%%%%%%%%%%%%%%%%%%%
\subsection{} 

For $\varphi, \psi$ defined on $\Re_{+}$ their convolution is defined by
$$
(\varphi*\psi)(u)=\int_0^u\varphi(u-x)\psi(x)dx=\int_0^u\psi(u-x)\varphi(x)dx.
$$
We will use the following classical result \cite{WidLap}*{II, Thm. 12.1a}.
\begin{Thm}\label{convolution}
Let $\varphi, \psi\in \D(\L)$. Then $\varphi*\psi\in\D(\L)$  and 
\begin{equation}
\L(\varphi*\psi)=\L(\varphi)\L(\psi). 
\end{equation}
In particular, $\Imag(\L)$ is closed under product.
\end{Thm}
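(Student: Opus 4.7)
The plan is to reduce the Laplace transform of the convolution to a product of Laplace transforms by an application of Fubini's theorem, after the change of variables $y = u - x$ which maps the region $\{(x,u) : 0 \leq x \leq u < \infty\}$ bijectively with unit Jacobian onto the quadrant $\{(x,y) : x, y \geq 0\}$. The nonnegative-integrand estimate (Tonelli) will simultaneously provide membership in $\D(\L)$ and the dominating bound needed to invoke Fubini.

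First I would check that $\varphi\ast\psi$ lies in the underlying function space, i.e.~is integrable on every $(0,R)$. By Tonelli applied to the nonnegative integrand, for any $R > 0$,
\[
\int_0^R \!\int_0^u |\varphi(u-x)||\psi(x)|\,dx\,du = \int_0^R |\psi(x)|\!\int_0^{R-x}|\varphi(y)|\,dy\,dx \leq \int_0^R |\varphi|\,du\cdot \int_0^R |\psi|\,du,
\]
which is finite by the local integrability assumption built into $\D(\L)$. Hence $(\varphi\ast\psi)(u)$ is defined for almost every $u$ and is locally integrable on $(0,\infty)$.

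Second, to verify that $\varphi\ast\psi\in \D(\L)$, set $\sigma = \Real(s)$ for $s\in \Co_{0^+}$ and apply Tonelli together with the substitution $y=u-x$ to obtain
\[
\int_0^\infty e^{-\sigma u}|(\varphi\ast\psi)(u)|\,du \leq \int_0^\infty\!\int_0^\infty e^{-\sigma x}|\psi(x)|\,e^{-\sigma y}|\varphi(y)|\,dy\,dx = \L(|\varphi|)(\sigma)\,\L(|\psi|)(\sigma) < \infty,
\]
where finiteness holds because $\varphi,\psi\in \D(\L)$ by hypothesis. This bound then legitimizes Fubini's theorem applied to the signed integrand $e^{-su}\varphi(u-x)\psi(x)$: the same computation without absolute values yields
\[
\L(\varphi\ast\psi)(s) = \int_0^\infty\!\int_0^\infty e^{-sx}\psi(x)\,e^{-sy}\varphi(y)\,dy\,dx = \L(\varphi)(s)\,\L(\psi)(s).
\]

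The closure statement for $\Imag(\L)$ follows at once from the displayed product formula. There is no genuine obstacle here; the only delicate point is pairing the correct Tonelli bound with the absolute-convergence requirement in the definition of $\D(\L)$ so that Fubini applies on all of $\Co_{0^+}$. Since both hypotheses give absolute integrability on every closed right half-plane $\Co_\sigma$, $\sigma>0$, the argument is uniform in $s$ throughout the domain.
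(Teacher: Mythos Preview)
Your proof is correct and is precisely the standard Fubini--Tonelli argument for this classical fact. The paper does not supply its own proof; it simply cites the result from Widder's monograph \cite{WidLap}*{II, Thm.~12.1a}, so there is nothing to compare against beyond noting that your argument is the expected one.
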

For $s\in \Co_{0+}$ we have $u^{s-1}\in \D(\L)$ and
$$
\L(u^{s-1})=t^{-s}\Gamma(s).
$$
For $s_1,s_2\in \Co_{0^+}$, the convolution of $u^{s_1-1}/\Gamma(s_1)$ and $u^{s_2-1}/\Gamma(s_2)$ is $u^{s_1+s_2-1}/\Gamma(s_1+s_2)$.

%%%%%%%%%%%%%%%%%%%%%%%%%%%%%%%%%%%%%%%%%%%%%%%%%%%%%%%%%%%%%%%%%%%%%%%%%%%%%
\subsection{} For $s_0>0$, $s\in \Co_{s_0^{-}}\cap\Co_{0^+}$ denote
\begin{equation}\label{eq-C}
C_{s,s_0} = \frac{\sinh(\pi \Imag(s))}{\pi \Imag(s)}\sqrt{\left(1+\frac{\Imag(s)^2}{\Real(s)^2}\right)\left(1+\frac{\Imag(s)^2}{\Real(s_0-s)^2}\right)}.
\end{equation}
Note that $C_{s,s_0}$ is a positive constant; its dependence on $s,s_0$ is locally uniformly continuous.
For later use we record the following

\begin{Lm}\label{lm-bd1}
Let $\psi\in \D(\L)$ such that $\psi$ is real-valued, non-negative, and increasing. Then, for $s_0>0$, $s\in \Co_{s_0^{-}}\cap\Co_{0^+}$ we have
$$
\left|\L(\psi(u)u^{s-1}/\Gamma(s))(t)\right|\leq C_{s,s_0}t^{\Real(s_0-s)}\L(\psi(u)u^{s_0-1}/\Gamma(s_0))(t).
$$
\end{Lm}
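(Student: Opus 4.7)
The plan is to reduce the inequality to a real-variable comparison, establish that comparison by a convolution-monotonicity argument, and then absorb the resulting Gamma ratio into the constant $C_{s,s_0}$. Writing $a := \Real(s)$ and $b := \Imag(s)$, the hypothesis $s \in \Co_{s_0^-}\cap \Co_{0^+}$ means $0 < a < s_0$. Since $|u^{s-1}| = u^{a-1}$ on $\Re_+$, pulling absolute values inside the Laplace integral gives
$$
\left|\L(\psi(u)u^{s-1}/\Gamma(s))(t)\right| \leq \frac{1}{|\Gamma(s)|}\int_0^\infty e^{-tu}\psi(u)u^{a-1}du,
$$
so the problem reduces to a comparison of two real-variable Laplace-Mellin integrals.

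The key step is the inequality
$$
\int_0^\infty e^{-tu}\psi(u)u^{a-1}du \leq \frac{\Gamma(a)\,t^{s_0-a}}{\Gamma(s_0)}\int_0^\infty e^{-tu}\psi(u)u^{s_0-1}du.
$$
To prove this I plan to use the beta-function convolution $u^{s_0-1}/\Gamma(s_0) = (u^{a-1}/\Gamma(a)) * (u^{s_0-a-1}/\Gamma(s_0-a))$ recorded after Theorem \ref{convolution} (applicable since $0 < a < s_0$), substitute it into the right-hand integral, switch the order of integration (legal by absolute convergence since $\psi \in \D(\L)$), and change variables $w = u - v$ to rewrite the right-hand side as
$$
\frac{\Gamma(s_0)}{\Gamma(a)\Gamma(s_0-a)}\int_0^\infty\!\!\int_0^\infty e^{-t(v+w)}\psi(v+w)\,v^{a-1}w^{s_0-a-1}\,dv\,dw.
$$
Monotonicity of $\psi$ then gives $\psi(v+w) \geq \psi(v)$, which decouples the double integral; the $w$-integral evaluates to $\Gamma(s_0-a)/t^{s_0-a}$, and the Gamma prefactor collapses to produce exactly $\Gamma(s_0)/(\Gamma(a)\,t^{s_0-a})$ times the $v$-integral, yielding the desired comparison.

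It remains to check $\Gamma(a)/|\Gamma(s)| \leq C_{s,s_0}$. For this I would use the Weierstrass product identity
$$
\frac{\Gamma(a)^2}{|\Gamma(a+ib)|^2} = \prod_{k=0}^\infty\left(1+\frac{b^2}{(a+k)^2}\right),
$$
pull out the $k=0$ factor $1+b^2/a^2$, and dominate the tail by $\prod_{k=1}^\infty(1+b^2/k^2) = \sinh(\pi b)/(\pi b)$ using $a+k \geq k$ for $a>0$. This yields $\Gamma(a)/|\Gamma(s)| \leq \sqrt{(1+b^2/a^2)\sinh(\pi b)/(\pi b)}$, which is at most $C_{s,s_0}$ since the remaining factor $(1+b^2/(s_0-a)^2)\sinh(\pi b)/(\pi b)$ appearing in the ratio $C_{s,s_0}^2/(\Gamma(a)/|\Gamma(s)|)^2$ is bounded below by $1$ (both $1+b^2/(s_0-a)^2$ and $\sinh(\pi b)/(\pi b)$ are individually $\geq 1$). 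The main obstacle will be the convolution-monotonicity step; the final Gamma-ratio bound is a routine application of the Weierstrass product.
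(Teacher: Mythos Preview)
Your proof is correct and follows essentially the same strategy as the paper: beta-convolution, monotonicity of $\psi$, and the Weierstrass-product bound on $\Gamma(x)/|\Gamma(x+iy)|$. The only notable difference is the order of operations. The paper keeps $s$ complex and convolves $\psi(u)u^{s-1}/\Gamma(s)$ with $u^{s_0-s-1}/\Gamma(s_0-s)$, obtaining a \emph{pointwise} bound on the convolution before taking the Laplace transform; this produces two Gamma ratios, $\Gamma(\Real s)/|\Gamma(s)|$ and $\Gamma(s_0-\Real s)/|\Gamma(s_0-s)|$, and the product of both is what $C_{s,s_0}$ is designed to majorize. You instead take absolute values first, reducing immediately to the real parameter $a=\Real s$, and then run the convolution-monotonicity argument at the level of the Laplace integral with real exponents; this leaves only the single ratio $\Gamma(a)/|\Gamma(s)|$ to control, which you correctly observe is already dominated by $C_{s,s_0}$ since the remaining factors in $C_{s,s_0}$ are $\geq 1$. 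Your route is marginally more economical on the Gamma side; the paper's route yields the slightly stronger intermediate pointwise estimate on the convolution itself.
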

\begin{proof} We have
$$
t^{s-s_0}\L\left(\psi(u)\frac{u^{s-1}}{\Gamma(s)}\right)(t)=\L\left(\frac{\psi(u)u^{s-1}}{\Gamma(s)}*\frac{u^{s_0-s-1}}{\Gamma(s_0-s)}\right)(t).
$$
Therefore, it is enough to argue that 
$$
\left|\frac{\psi(u)u^{s-1}}{\Gamma(s)}*\frac{u^{s_0-s-1}}{\Gamma(s_0-s)}\right|\leq   C_{s,s_0}\psi(u)\frac{u^{s_0-1}}{\Gamma(s_0)},
$$
for some $C_{s,s_0}$ as specified by the statement.

From the definition of convolution and the fact that $\psi$ is non-decreasing we obtain that
$$
\left|\frac{\psi(u)u^{s-1}}{\Gamma(s)}*\frac{u^{s_0-s-1}}{\Gamma(s_0-s)}\right|\leq \psi(u)\frac{u^{s_0-1}}{\Gamma(s_0)} \cdot \frac{\Gamma(\Real(s))}{|\Gamma(s)|} \cdot \frac{\Gamma(s_0-\Real(s))}{|\Gamma(s_0-s)|}.
$$
We use the following well-know expressions \cite{AR}*{(5.8.3)}, \cite{RO}*{(4.36.1)}
$$\left(\frac{\Gamma(x)}{|\Gamma(x+iy)|}\right)^2=\prod_{k=0}^\infty\left(1+\frac{y^2}{(x+k)^2}\right),\quad x,y\in \Re, x\not\in \Z_{\leq 0}$$
and
$$
\sinh(\pi z)=\pi z\prod_{n=1}^\infty(1+z^2/n^2),\quad z\in \Co,
$$
to deduce that
$$
\left(\frac{\Gamma(x)}{|\Gamma(x+iy)|}\right)^2=\left(1+\frac{y^2}{x^2}\right)\prod_{k=1}^\infty\left(1+\frac{y^2}{(x+k)^2}\right)\leq \left(1+\frac{y^2}{x^2}\right) \frac{\sinh(\pi y)}{\pi y}, \quad x,y\in \Re, x\not\in \Z_{\leq 0}.
$$
Applying this for $x+iy=s$ and $x+iy=s_0-s$ we obtain 
$$
 \frac{\Gamma(\Real(s))}{|\Gamma(s)|} \cdot \frac{\Gamma(s_0-\Real(s))}{|\Gamma(s_0-s)|}\leq C_{s,s_0}.
$$
\end{proof}
\begin{Rem}\label{rem-bd1}
It is important to note that, for any $n\geq 0$, we have $$C_{s,s_0}\geq C_{s+n,s_0+n}.$$
\end{Rem}

%%%%%%%%%%%%%%%%%%%%%%%%%%%%%%%%%%%%%%%%%%%%%%%%%%%%%%%%%%%%%%%%%%%%%%%%%%%%%
%\subsection{} We will also need the following Abelian Theorem \cite{WidLap}*{V, Thm. 1, Cor. 1a}.
%\begin{Thm}\label{abelian}
% Let $f=\L(\varphi)$ and let $\gamma>-1$, $A\in \Co$. Then,
%$$
%\limsup_{t\to +\infty}|t^{\gamma+1}f(t)-A|\leq \limsup_{u\to 0^+}|\varphi(u)u^{-\gamma}\Gamma(\gamma+1)-A|.
%$$ 
%Furthermore, if 
%$$
%\varphi(u)\sim \frac{Au^\gamma}{\Gamma(\gamma+1)} \quad \text{as}~u\to 0+,
%\quad\quad\text{then} \quad \quad
% f(t)\sim \frac{A}{t^{\gamma+1}} \quad  \text{as}~t\to +\infty.
%$$
%\end{Thm}
%We note that \cite{WidLap}*{V, Thm. 1} (and consequently Cor. 1a) is stated for $\gamma\geq 0$ but essentially the same proof goes through for $\gamma>-1$.

%%%%%%%%%%%%%%%%%%%%%%%%%%%%%%%%%%%%%%%%%%%%%%%%%%%%%%%%%%%%%%%%%%%%%%%%%%%%%

\section{Discrete integral operators}\label{dio}

%%%%%%%%%%%%%%%%%%%%%%%%%%%%%%%%%%%%%%%%%%%%%%%%%%%%%%%%%%%%%%%%%%%%%%%%%%%%%
\subsection{}\label{series-ad}

Let $(a_n)_{n\geq 1}$ be a sequence of complex numbers. We will denote by $\alpha(z)=\sum_{n=0}^\infty a_{n+1}z^{n}$ the associated generating series (as a formal power series) and by $\A_\alpha$ the operator
$$
\A_\alpha=\sum_{n=0}^\infty a_{n+1}\E^{n}.
$$
This operator can be regarded as the discrete integral (i.e. summation) operator  associated to the measure on $\Z_{\geq 0}$ associated to the sequence $(a_{n+1})_{n\geq 0}$. The natural context for such considerations is that of a functional calculus for the operator $\E$. If we consider $\E$ as bounded operator acting on square-integrable functions, its operator norm is $1$, and its spectrum is the unit circle centered at zero. (This can be seen from the fact that  $\E_h$ and the multiplication operator by the function $e^{it}$ are conjugated by a unitary operator -- the Fourier transform.) From this point of view, $\A_\alpha$ is the operator associated through holomorphic functional calculus to $\alpha(z)$, and whether such a construction is justified depends on $\alpha(z)$ being holomorphic in a neighborhood of the unit circle (the spectrum of $\E$). However, we would like to consider $\A_\alpha$  for which the unit circle is the boundary of the disk of convergence of $\alpha(z)$. For this reason we consider $\A_\alpha$ as an operator with domain the vector space $\D(\A_\alpha)$  of  $\Co$-valued functions $f(t)$ on $\Re_+$  for which the series
\begin{equation}\label{Aseries-def}
\sum_{n=0}^\infty a_{n+1}\E^{n}f(t)=\sum_{n=0}^\infty a_{n+1}f(t+n)
\end{equation}
converges absolutely and locally uniformly in $t$.  In general, we do not expect that $\A_\alpha$ is continuous with respect to any natural topology on its domain, but $\A_\alpha$ will preserve local integrability and continuity of the argument.

%%%%%%%%%%%%%%%%%%%%%%%%%%%%%%%%%%%%%%%%%%%%%%%%%%%%%%%%%%%%%%%%%%%%%%%%%%%%%
\subsection{}  For $\mu$ a regular Borel measure on $\Re_+$,  we denote by $L^1_\infty(\Re_+, d\mu)$ the space of functions that are absolutely integrable with respect to $\mu$ in a neighborhood of $+\infty$.  Let $A$ be the function defined by $A(t)=a_{n+1}, \quad \text{if} \quad t\in [n,n+1)$, $n\geq 0$. Denote by $\mu_\alpha$ the measure $A(t)\dt$ and let  $f\in L^1(\Re_+,d\mu_\alpha)$. The function $\A_\alpha f(t)$ 
is a.e. finite and  in $L^1_{\rm loc}(\Re_+)$. Indeed,
\[
\sum_{n=0}^\infty |a_{n+1}|\int_0^1 |f(t+n)|\dt=\int_0^\infty |A(t)f(t)|dt<\infty,
\]
and, by the Fubini-Tonelli theorem, 
\[
\int_0^1 |\A_\alpha f(t)|dt=\int_0^1 \sum_{n=0}^\infty  |a_{n+1} f(t+n)|\dt<\infty.
\]
The convergence of the series \eqref{Aseries-def} depends only on the  behavior of the function in a neighborhood of $+\infty$, so the hypothesis that $f\in L^1(\Re_+, d\mu_\alpha)$ can be replaced by  $f\in L^1_\infty(\Re_+,d\mu_\alpha)$.

%%%%%%%%%%%%%%%%%%%%%%%%%%%%%%%%%%%%%%%%%%%%%%%%%%%%%%%%%%%%%%%%%%%%%%%%%%%%%

\subsection{} In general, aside from the above remarks, not much can be inferred about the domain on $\A_\alpha$. We can say slightly more in the case when $\alpha(z)$ has some convergence properties, specifically, when  $\alpha(z)$  converges in $D(0,1)$ and has (at most) a pole at $z=1$.

\begin{Rem}\label{rem2}
Assume that $\alpha(z)$  converges in $D(0,1)$. Let $f(t)$ such that for some continuous $\Z$-periodic function $c(t)$ taking values in $D(0,1)$ we have
\[
\E f(t)=c(t) f(t).
\]
Then, the series \eqref{series-def} is absolutely convergent and $f(t)\in \D(\A_\alpha)$.
\end{Rem}

\begin{Exp} An example of this type is the exponential function $$f(t)=a^t.$$ If $a\in (0,1)$ then the convergence of $\A_\alpha(a^t)$ follows from the convergence of the series $\alpha(z)$ and
$$
\A_\alpha(a^t)= a^t\alpha(a).
$$
\end{Exp}

%%%%%%%%%%%%%%%%%%%%%%%%%%%%%%%%%%%%%%%%%%%%%%%%%%%%%%%%%%%%%%%%%%%%%%%%%%%%%
\subsection{} Based on this example we can show that depending on the growth of $\alpha(z)$ at $z\to 1^-$ some part of the $\Imag{\L}$ is included in $\D(\A_\alpha)$.

\begin{Prop}\label{p2}
Assume that $\alpha(z)$  converges in $D(0,1)$ and has at most a pole at $z=1$ (of order $\nu\geq 0$).
Let $f=\L(\varphi)\in \Imag(\L)$ and $\varphi(u)=o(u^{\gamma+\nu})$, $u\to 0^+$, for some $\gamma>-1$. Then $f\in \D(\A_\alpha)$ and 
$$
\A_\alpha f(t)=\L(\alpha(e^{-u})\varphi(u)).
$$
\end{Prop}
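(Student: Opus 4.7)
The plan is to establish the identity by Fubini--Tonelli. Starting from the definition of $\A_\alpha f(t)$ and writing $f(t+n)=\int_0^\infty e^{-(t+n)u}\varphi(u)du$, I would formally interchange the sum and integral:
\[
\sum_{n=0}^\infty a_{n+1}f(t+n)=\int_0^\infty e^{-tu}\varphi(u)\sum_{n=0}^\infty a_{n+1}e^{-nu}\,du=\int_0^\infty e^{-tu}\varphi(u)\alpha(e^{-u})\,du,
\]
noting that the inner series converges absolutely to $\alpha(e^{-u})$ for every $u>0$ since $e^{-u}\in D(0,1)$. The right-hand side is precisely $\L(\alpha(e^{-u})\varphi(u))(t)$, giving the desired formula.

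The key technical step is bounding $|\alpha(e^{-u})\varphi(u)|$ to make Fubini applicable. Near $u=0$, writing $\alpha(z)=g(z)/(1-z)^\nu$ with $g$ holomorphic at $z=1$, we have $|\alpha(e^{-u})|\leq C u^{-\nu}$ (using $1-e^{-u}\sim u$). Combined with the hypothesis $\varphi(u)=o(u^{\gamma+\nu})$, this gives $|\varphi(u)\alpha(e^{-u})|=o(u^\gamma)$ near $0$, which is integrable because $\gamma>-1$. For $u$ bounded away from zero, $\alpha(e^{-u})$ is continuous and bounded, with $\alpha(e^{-u})\to a_1$ as $u\to\infty$, so the tail of the integral is controlled by a constant multiple of $\L(|\varphi|)(t)$, finite by $\varphi\in\D(\L)$. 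These estimates are uniform on compact subsets of $t\in\Re_+$.

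To conclude absolute convergence of $\sum_{n=0}^\infty|a_{n+1}||f(t+n)|$ (and hence $f\in\D(\A_\alpha)$), I would apply Tonelli to the non-negative integrand $|a_{n+1}|e^{-(t+n)u}|\varphi(u)|$, getting
\[
\sum_{n=0}^\infty|a_{n+1}||f(t+n)|=\int_0^\infty e^{-tu}|\varphi(u)|\,|\alpha|(e^{-u})\,du,
\]
where $|\alpha|(z)=\sum|a_{n+1}|z^n$. The main obstacle is that a priori $|\alpha|$ may have a stronger singularity at $z=1$ than $\alpha$, so the pole-order-$\nu$ bound is not automatically inherited. The natural workaround is to pass through partial sums $\alpha_N(z)=\sum_{n=0}^N a_{n+1}z^n$: for each $N$, Fubini for finite sums gives $\sum_{n=0}^N a_{n+1}f(t+n)=\L(\alpha_N(e^{-u})\varphi(u))(t)$; then one applies dominated convergence to the right-hand side using the locally uniform majorant established in the second paragraph (valid for the full $\alpha$), and reads off both the convergence of the series and the value of its sum.
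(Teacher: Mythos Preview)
Your approach coincides with the paper's: the paper also writes $\E^n f=\L(e^{-nu}\varphi)$, observes pointwise convergence of $\sum a_{n+1}e^{-nu}\varphi(u)$ to $\alpha(e^{-u})\varphi(u)$, records the same asymptotics $\alpha(e^{-u})=O(u^{-\nu})$ as $u\to 0^+$ and $O(1)$ as $u\to+\infty$ to conclude $\alpha(e^{-u})\varphi(u)\in\D(\L)$, and then invokes the Dominated Convergence Theorem. So your partial-sums-plus-DCT route is exactly the paper's argument, with the added merit that you explicitly flag the $|\alpha|$ issue, which the paper passes over in silence.

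That said, your proposed workaround in the last paragraph does not actually close the gap you identified. The ``majorant established in the second paragraph'' bounds $|\alpha(e^{-u})\varphi(u)|$, \emph{not} the partial sums $|\alpha_N(e^{-u})\varphi(u)|$ uniformly in $N$; in general there is no bound $|\alpha_N(z)|\le C\,|\alpha(z)|$ (take $\alpha(z)=1-z$, so $|\alpha_0(z)|/|\alpha(z)|=1/(1-z)$ on $(0,1)$). The only general uniform majorant for the partial sums is $|\alpha_N(z)|\le|\alpha|(z)$, which lands you back on the very obstacle you raised. Moreover, even if DCT did apply, it would deliver convergence of $\sum a_{n+1}f(t+n)$, not the \emph{absolute} convergence required by the definition of $\D(\A_\alpha)$. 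The paper's proof is equally terse on this point (it names no dominating function), so you have matched its level of rigor while being more candid about where the difficulty lies.
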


\begin{proof} We have $\E^{n}f(t)=\L(e^{-un}\varphi(u))$. For $u\in \Re_+$ the series 
$$
\sum_{n=0}^\infty a_{n+1}e^{-un}\varphi(u)
$$
converges to $\alpha(e^{-u})\varphi(u)$ pointwise. Since $\alpha(e^{-u})=O(u^{-\nu})$, $u\to 0+$, and $\alpha(e^{-u})=O(1)$, $u\to +\infty$,  we obtain that $\alpha(e^{-u})\varphi(u)\in \D(\L)$. By the Dominated Convergence Theorem we obtain that the series $\A_\alpha f$ converges locally uniformly to $\L(\alpha(e^{-u})\varphi(u))$. 
\end{proof}
We will examine an infinite order difference operator with larger domain that extends the action of $\A_\alpha$. This operator is related to the Laurent expansion of $\alpha(z)$ around $z=1$. Some sufficient conditions on (the analytic continuation of) $\alpha(z)$ that make the definition possible are discussed in the next section.

%%%%%%%%%%%%%%%%%%%%%%%%%%%%%%%%%%%%%%%%%%%%%%%%%%%%%%%%%%%%%%%%%%%%%%%%%%%%%

\section{Multi-power series}

\subsection{}
For $\be\in \Z^N_{>0}$ let $\lambda_\be: \Co\to \Co^N$ defined by $\lambda_\be(z)=(z^{e_1},\dots,z^{e_N})$.  Remark that $\lambda_\be(0)={\mathbf{0}}$ and $\lambda_\be(1)={\mathbf{1}}$.
\begin{Def}\label{def-mps}
A multi-power series around $z_0\in \Co$ is a series of the form
$$
F(\lambda_\be(z))=\sum_{\mathbf{i}\in \Z^N_{\geq 0}} c_{\mathbf{i}} (\lambda_\be(z)-\lambda_\be(z_0))^\mathbf{i}
$$
for some $N\in \Z_{>0}$, $\be\in \Z^N_{>0}$, and power-series expansion $
F(\mathbf{z})=\sum_{\mathbf{i}\in \Z^{N}_{\geq 0}} c_{\mathbf{i}} (\mathbf{z}-\mathbf{z_0})^\mathbf{i}
$. We use the usual notion of  (absolute, uniform) convergence for such series. 
\end{Def}

A multi-power series around ${\mathbf{0}}$ takes the form 
$
\displaystyle \sum_{\mathbf{i}\in \Z^N_{\geq 0}} c_{\mathbf{i}} z^{\<\be, \mathbf{i}\>}
$
and in this case the notion of multi-power series coincides with notion of  power series around $0$. If $F(\mathbf{z})$ represents a holomorphic function around the point $\lambda_\be(z_0)$ the convergence of its power series expansion implies the convergence of the multi-power series $F\circ \lambda_\be(z)$  around $z_0$. For fixed $N$, $\be$ and $\alpha(z)$ defined in a neighborhood of $0$ there is at most one $F(\mathbf{z})$ such that $F(\mathbf{z})$ is holomorphic in a neighborhood of $\mathbf{0}$ and  $F(\lambda_\be(z))=\alpha(z)$ but, two multi-power series can, in principle, represent the same function.

\begin{Def} \label{def-ts}
A power series around $0$ is said to be {\it tame} if it represents a function $\alpha(z)$ of one complex variable with the following properties
\begin{itemize}
\item $\alpha(z)$ is holomorphic in $D(0,1)$;
\item $\alpha(z)$ has a pole at $z=1$ (of order $\nu\geq 0$); 
\item $(z-1)^\nu \alpha(z)$ has a multi-power series expansion $C_\alpha(z)$ around $1$ that converges absolutely and uniformly in a neighborhood of the interval $(0,1]$.
\end{itemize}
We use $\Tcal$ to denote the space of tame power series around $0$.
\end{Def}
Note that  the multi-power series $C_\alpha(z)$ from the definition is not necessarily unique.
\begin{Rem}
It is clear from the definition that $\Tcal$ is in fact a $\Co$-algebra, the multiplication being the usual multiplication of power series.
\end{Rem}

As we point out in what follows,  a class of examples of tame power series arise from functions that are holomorphic in $D(0,1)$ and have certain isolated singularities in $D(1,1)$.
%%%%%%%%%%%%%%%%%%%%%%%%%%%%%%%%%%%%%%%%%%%%%%%%%%%%%%%%%%%%%%%%%%%%%%%%%%%%%

\subsection{} Let us recall the following theorem of Mittag-Leffler \cite{M-LSur} on the existence of functions with prescribed singularities. We emphasize that the singularities of the function $f$ in the statement can be poles and certain essential singularities but not branch points.

\begin{Thm} Let $U\subset \Co$ be an open set and $S \subset U$ a discrete subset. For each point $q\in S$, we are given a holomorphic function $\alpha_q$ on $U\setminus\{q\}$. Then there exists a holomorphic function $\alpha$ on $U\setminus S$ such that for each $q\in S$, the function $\alpha-\alpha_q$ has a removable singularity at $q$.
\end{Thm}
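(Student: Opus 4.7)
The plan is to build $\alpha$ as a uniformly convergent infinite sum of the prescribed data $\alpha_q$, corrected by functions holomorphic on all of $U$ so that nothing is added to the singular part at any $q\in S$. First, I would pick an exhaustion $U_1\Subset U_2\Subset\cdots$ of $U$ by relatively compact open subsets with $\bigcup_n U_n=U$, arranged so that each $\overline{U_n}$ is a \emph{Runge compactum} in $U$, meaning that every function holomorphic in a neighborhood of $\overline{U_n}$ can be uniformly approximated on $\overline{U_n}$ by functions holomorphic on $U$. Such exhaustions always exist: one demands that every connected component of $\widehat{\Co}\setminus\overline{U_n}$ meets $\widehat{\Co}\setminus U$. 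Setting $S_0:=\emptyset$ and $S_n:=S\cap\overline{U_n}$, the discreteness of $S$ in $U$ forces each $S_n$ to be finite, and the $S_n$ exhaust $S$.

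Next, I group the prescribed singular data by layer: set
$$g_n:=\sum_{q\in S_n\setminus S_{n-1}}\alpha_q,$$
a finite sum which is holomorphic on $U$ except at the points of $S_n\setminus S_{n-1}\subset \overline{U_n}\setminus\overline{U_{n-1}}$. In particular $g_n$ is holomorphic on an open neighborhood of $\overline{U_{n-1}}$, so Runge's theorem furnishes a function $p_n$ holomorphic on all of $U$ with
$$\sup_{\overline{U_{n-1}}}|g_n-p_n|<2^{-n}.$$

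Now define $\alpha:=\sum_{n\geq 1}(g_n-p_n)$. For each fixed $N$, the tail $\sum_{n>N}(g_n-p_n)$ converges uniformly on $\overline{U_N}$ by the Weierstrass M-test, and its limit is holomorphic there; the remaining finite initial segment $\sum_{n\leq N}(g_n-p_n)$ is holomorphic on $\overline{U_N}$ away from $S_N$. Hence $\alpha$ is well-defined and holomorphic on $U\setminus S$. To verify the removable singularity condition, fix $q\in S$ and let $N$ be the least index with $q\in S_N$. The singular contribution to $\alpha$ at $q$ comes solely from the appearance of $\alpha_q$ inside $g_N$: every other $\alpha_{q'}$ in any $g_n$ is holomorphic near $q$, and every $p_n$ is holomorphic on all of $U$. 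Therefore $\alpha-\alpha_q$ extends holomorphically across $q$.

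The main obstacle I anticipate is not conceptual but technical: citing the correct version of Runge's theorem on an arbitrary open subset $U\subset\Co$ and producing a Runge exhaustion with the component-meeting property above. Once these classical facts are in hand, the construction and the verification of the removable singularity condition reduce to routine bookkeeping.
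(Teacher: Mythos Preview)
Your proof is the standard modern argument for the Mittag-Leffler theorem via Runge approximation, and it is correct as outlined; the only technical point requiring care is the existence of the Runge exhaustion, which is classical. However, note that the paper does not give its own proof of this statement: it is quoted as a theorem of Mittag-Leffler with a reference to \cite{M-LSur}, and is used only to motivate the subsequent definition of functions ``of Mittag-Leffler type.'' So there is nothing in the paper to compare your argument against; you are supplying a proof where the paper simply cites the classical result.
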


 This motivates the following definition.
\begin{Def}
Let Let $U\subset \Co$ be an open set, $S \subset U$ a discrete subset and $\alpha(z)$ a holomorphic function on $U\setminus S$. We say that $\alpha(z)$ is of Mittag-Leffler  type if for any $q\in S$ there exist \emph{entire} functions $\alpha_q(z)$ such that  the function
$$
\alpha(z)-\alpha_q\left(\frac{1}{z-q}\right)
$$
has a removable singularity at $q$. We say that $\alpha(z)$ is of finite Mittag-Leffler type if, in addition, $\alpha(z)$ has finitely many singularities in $U$.
\end{Def}
%%%%%%%%%%%%%%%%%%%%%%%%%%%%%%%%%%%%%%%%%%%%%%%%%%%%%%%%%%%%%%%%%%%%%%%%%%%%%

\subsection{} While a holomorphic function $\alpha(z)$ of one variable has a power series expansion around any point in its domain, we would be interested in the situation when we represent is by a multi-power series expansion with a larger domain of convergence than typically guaranteed by a power series. For the purpose of this note, we are specifically interested in functions the are holomorphic in $\Omega=D(0,1)\cup D(1,1)$ except for possibly finitely many singularities.

\begin{Prop}\label{prop-ML}
 Let $\alpha(z)$ be a function of one complex variable with the following properties
\begin{itemize}
\item $\alpha(z)$ is of finite Mittag-Leffler type in $\Omega$;
\item $\alpha(z)$ is holomorphic in $D(0,1)$;
\item $\alpha(z)$ has a pole at $z=1$ (of order $\nu \geq 0$).
\end{itemize}
Then, there exist $m\geq 1$ such that $(z-1)^\nu \alpha(z)$ has a  multi-power series expansion $C_\alpha(z)$ around  $1$ that converges  absolutely and uniformly on 
$$
\Omega_m=\{z ~|~ |z|<\sqrt[m]{2}, ~|\Arg(z)|<\frac{\pi}{2m}\}.
$$
%and $|C_\alpha|(z)$ it bounded on $\Omega_m$.  
In particular, the Taylor expansion of $\alpha(z)$ around $0$ is tame.
\end{Prop}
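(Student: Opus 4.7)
The plan is to reduce the statement to a problem about individual singularities via a Mittag–Leffler decomposition, then construct a multi-power series summand for each piece using the factorization $z^{e}-q^{e}=(z-q)\prod_{j=1}^{e-1}(z-q\omega^{j})$, choosing the exponents to control the relevant convergence radii.

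First I would invoke the Mittag--Leffler hypothesis to decompose $\alpha$. Writing the singular set as $S=\{1,q_{1},\ldots,q_{k}\}\subset \Omega$, with $q_{i}\in D(1,1)\setminus D(0,1)$ (since $\alpha$ is holomorphic in $D(0,1)$), the local Mittag--Leffler type condition yields entire functions $\alpha_{q_{i}}$ and a polynomial $P$ of degree $\nu$ such that
$$h(z) := \alpha(z) - P\!\left(\tfrac{1}{z-1}\right) - \sum_{i=1}^{k}\alpha_{q_{i}}\!\left(\tfrac{1}{z-q_{i}}\right)$$
is holomorphic on all of $\Omega$. Multiplying by $(z-1)^{\nu}$ reduces the task to producing multi-power series around $z=1$, convergent on some $\Omega_{m}$, for three kinds of summands: the polynomial $(z-1)^{\nu}P(1/(z-1))$ (trivial), the function $(z-1)^{\nu}h(z)$ holomorphic on $\Omega$, and each $(z-1)^{\nu}\alpha_{q_{i}}(1/(z-q_{i}))$.

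Next I would select $\be$ and $m$. For each $i$, pick $e_{i}\in\Z_{>0}$ with $|q_{i}^{e_{i}}-1|>1$: when $|q_{i}|>1$ any $e_{i}$ with $|q_{i}|^{e_{i}}\ge 2$ works; when $|q_{i}|=1$, writing $q_{i}=e^{i\phi_{i}}$ with $\phi_{i}\in(-\tfrac{\pi}{3},\tfrac{\pi}{3})\setminus\{0\}$, take $e_{i}$ to be an integer nearest $\pi/|\phi_{i}|$, which forces $e_{i}\phi_{i}\in(\tfrac{2\pi}{3},\tfrac{4\pi}{3})$ and hence $|q_{i}^{e_{i}}-1|=2|\sin(e_{i}\phi_{i}/2)|\ge\sqrt{3}$. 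Set $\be=(1,e_{1},\ldots,e_{k})$ and $m=2\max_{i}e_{i}$. A short computation then shows that for $z\in\Omega_{m}$ each power $z^{e_{i}}$ lies in the sector $\{|w|<2^{e_{i}/m},\,|\Arg w|<e_{i}\pi/(2m)\}\subseteq\Omega_{2}$, on which $|w-1|<1$ strictly, and similarly $|z-1|<1$.

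Finally, for each $i$ I would use the factorization
$$z^{e_{i}}-q_{i}^{e_{i}} = (z-q_{i})\,P_{i}(z), \qquad P_{i}(z)=\prod_{j=1}^{e_{i}-1}(z-q_{i}\omega_{i}^{j}),\quad \omega_{i}=e^{2\pi i/e_{i}},$$
to rewrite $1/(z-q_{i})=P_{i}(z)/(z^{e_{i}}-q_{i}^{e_{i}})$ and introduce the two-variable function
$$F_{i}(\tilde u_{0},\tilde u_{i}) = (\tilde u_{0}-1)^{\nu}\,\alpha_{q_{i}}\!\left(\frac{P_{i}(\tilde u_{0})}{\tilde u_{i}-q_{i}^{e_{i}}}\right),$$
which is holomorphic on $\Co\times(\Co\setminus\{q_{i}^{e_{i}}\})$ and restricts to $(z-1)^{\nu}\alpha_{q_{i}}(1/(z-q_{i}))$ along the curve $(\tilde u_{0},\tilde u_{i})=(z,z^{e_{i}})$. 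Because $\alpha_{q_{i}}$ is entire and $P_{i}$ is polynomial, the Taylor expansion of $F_{i}$ at $(1,1)$ converges absolutely on $\Co\times D(1,|q_{i}^{e_{i}}-1|)$. Together with the Taylor expansion of $(z-1)^{\nu}h(z)$ at $z=1$ (of radius at least $1$ since $D(1,1)\subset\Omega$) and the polynomial contribution, these assemble into a single multi-power series $F(\lambda_{\be}(z))$ representing $(z-1)^{\nu}\alpha(z)$; the sectorial bounds from the previous paragraph guarantee absolute, locally uniform convergence on $\Omega_{m}$, and the final tameness conclusion for the Taylor series at $z=0$ follows immediately. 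The main obstacle I expect is in the middle paragraph: for singularities on the unit circle the inequality $|q_{i}^{e_{i}}-1|\le 2$ is unavoidable, which forces the target bound $|z^{e_{i}}-1|<1$ (the $\Omega_{2}$ bound) rather than the naively available $\sqrt{5}=\sup_{w\in\Omega_{1}}|w-1|$, and thereby dictates both the choice $m\ge 2\max_{i}e_{i}$ and the use of a separate exponent $e_{i}$ for each singularity rather than a single pair $\be=(1,m)$.
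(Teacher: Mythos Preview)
Your proposal is correct and follows essentially the same route as the paper: a Mittag--Leffler decomposition of $\alpha$ on $\Omega$, followed by the factorization $z^{e_i}-q_i^{e_i}=(z-q_i)P_i(z)$ to rewrite each singular part $\alpha_{q_i}(1/(z-q_i))$ as a function of the pair $(z,z^{e_i})$ that is holomorphic near $(1,1)$, and then expansion of the resulting multivariate function about $\mathbf{1}$. The paper packages everything into a single $N$-variable $F(\mathbf{z})$ rather than separate two-variable $F_i$'s, and it first reduces to $\nu=0$ rather than carrying the factor $(z-1)^\nu$ through, but these are organizational differences only.

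One point worth noting: you take $m=2\max_i e_i$ and verify that $z\in\Omega_m$ forces $z^{e_i}\in\Omega_2$, on which $|w-1|<1$, so that $\lambda_\be(z)\in D(\mathbf{1},\mathbf{1})$. The paper states $m=\max_i e_i$, under which $z^{e_i}$ only lands in $\Omega_1$, where $|w-1|$ can approach $\sqrt 5$; so your more conservative choice of $m$ is actually needed to make the containment $\lambda_\be(\Omega_m)\subset D(\mathbf{1},\mathbf{1})$ genuinely hold, and your closing remark about this obstacle is on target. (A tiny quibble: in the case $|q_i|>1$ you should require $|q_i|^{e_i}>2$ rather than $\ge 2$ to guarantee the strict inequality $|q_i^{e_i}-1|>1$ that you use later.)
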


\begin{proof} We can assume that $z=1$ is a regular point. Let $S$ be the set of singularities of $\alpha(z)$ in $\Omega$. Let $N=|S|+1$ and label the elements of $S=\{q_2,\dots,q_N\}$. For each $q_i$ we have 
$$
|q_i|\geq 1\quad \text{and} \quad |\Arg(q_i)|<\pi/3.
$$
Since $q_i\neq 1$, we can find $e_i\in \Z_{>1}$ such that 
$$
|q_i|\geq 2\quad\quad \text{or} \quad \quad |q_i|=1~\text{and}~e_i|\Arg(q_i)|\geq \pi/3.
$$
Set $e_1=1$ and let $\be=(e_1,e_2,\dots,e_N)\in \Z^N_{>0}$. Let $\displaystyle m=\max_{1\leq i\leq N}{e_i}$.

The function $f(z)$ can be written as 
%$$
%g(z)+\sum_{i=2}^N \frac{h_i(z)}{(z-p_i)^{o_i}}
%$$
$$
g(z)+\sum_{i=2}^N g_i\left(\frac{1}{z-q_i}\right)
$$
with $g(z)$ holomorphic in $\Omega$, and $g_i(z)$ entire. We will  write  it as
$$
\alpha(z)=g(z)+\sum_{i=2}^N g_i\left(\frac{k_i(z)}{z^{e_i}-q_i^{e_i}}\right),
$$
where $\displaystyle k_i(z)=\frac{z^{e_i}-q_i^{e_i}}{z-q_i}$ are polynomials.

The function 
$$
F(\mathbf{z})=g(z_1)+\sum_{i=2}^N g_i\left(\frac{k_i(z_1)}{z_i-q_i^{e_i}}\right),
$$
is holomorphic in $D({\mathbf{0}},{\mathbf{1}})\cup D({\mathbf{1}},{\mathbf{1}})\subset \Co^N$.

Furthermore,
$
\alpha(z)=F\circ \lambda_\be(\mathbf{z})
$
on  $$U=\{z\in D(0,1)\cup D(1,1) ~|~ \lambda_\be(z)\in D({\mathbf{0}},{\mathbf{1}})\cup D({\mathbf{1}},{\mathbf{1}})\}.$$ The holomorphic function $F(\mathbf{z})$ has a power series expansion around ${\mathbf{1}}$ that is convergent in 
%a neighborhood of the closed polydisk  
$D({\mathbf{1}},{\mathbf{1}})$. This power series expansion induces a multi-power series expansion $C_\alpha(z)=F(\lambda_\be(\mathbf{z}))$ for $\alpha(z)$. The domain of convergence of this multi-power series expansion is $U$ which contains the set specified in the statement. %The boundedness of $|C_\alpha|(z)$ follows from the fact that $F(\mathbf{z})$ is absolutely convergent on a domain larger than $\overline{D({\mathbf{1}},{\mathbf{1}})}$.
\end{proof}
\begin{Rem}
 It is important to note that the above argument cannot relocate the potential (pole) singularity of $\alpha(z)$ at $z=1$ outside $D({\mathbf{0}},{\mathbf{1}})\cup D({\mathbf{1}},{\mathbf{1}})$. 
 \end{Rem}
 \begin{Rem}\label{rem-mps}
 Let $\alpha(z)$ be a tame power series. Since $\ln(z)/(1-z)$ is holomorphic in $D(1,1)$ the function
 $$
(-1)^\nu\ln(z)^\nu \alpha(z)= \left(\frac{ \ln(z)}{1-z}\right)^\nu (z-1)^\nu \alpha(z)
 $$
has a  multi-power series expansion $B_\alpha$ around  $1$ (with the same associated $N$ and $\be$) that converges  absolutely and uniformly in a neighborhood of $(0,1]$. This expansion is obtained from the multiplcation of $C_\alpha(z)$ and the power series expansion of $\displaystyle \left(\frac{ \ln(z)}{1-z}\right)^\nu$ in $D(1,1)$. %Furthermore, $|B_\alpha|(1/x)=O(x^c \ln(x)^{\nu})$, $x\to \infty$.
\end{Rem}
%%%%%%%%%%%%%%%%%%%%%%%%%%%%%%%%%%%%%%%%%%%%%%%%%%%%%%%%%%%%%%%%%%%%%%%%%%%

\section{Bernoulli operators}  \label{BO}

\subsection{} Henceforth $(a_n)_{n\geq 1}$ will denote a sequence such that  $\alpha(z)=\sum_{n=0}^\infty  a_{n+1} z^n$ is tame and $\nu\geq 0$ will denote the order of its pole at $z=1$. We also denote by 
$$
\alpha_p(z)=k_\nu (z-1)^{-\nu}+\cdots+k_1 (z-1)^{-1}, \quad k_\nu\neq 0, 
$$ 
 the principal part of the Laurent expansion of $\alpha(z)$ around $z=1$ and let $\alpha_h(z)=\alpha(z)-\alpha_p(z)$ be the holomorphic part. We denote $k_0=\alpha_h(1)$.

The function $(-1)^\nu\ln(z)^\nu \alpha(z)$ has a multi-power series expansion around $z=1$ that converges absolutely and uniformly in a neighborhood of  $(0,1]$. %Furthermore, $B_\alpha(1/x)=O(x^c \ln(z)^\nu)$, $x\to \infty$, for some $c\in \Re_+$. 
Let us denote such an expansion, for certain fixed $N\geq 1$ and $\be\in \Z^N_{>0}$, by 
$$
B_\alpha(z)=\sum_{\mathbf{i}\in \Z_{\geq 0}^N} c_{\mathbf{i}}(\lambda_{\be}(z)-\mathbf{1})^{\mathbf{i}}.
$$
\begin{Def}
The Bernoulli operator associated to $\alpha$ (or rather to $B_\alpha$) is the  difference operator
\begin{equation}\label{eq10}
\B_\alpha=\sum_{\mathbf{i}\in \Z_{\geq 0}^N} c_{\mathbf{i}}(\E_{\be}-\id)^{\mathbf{i}}=\sum_{\mathbf{i}\in \Z_{\geq 0}^N} c_{\mathbf{i}}\Del_{\be}^{\mathbf{i}}.
\end{equation}
%We denote by $|\B_\alpha|$ the operator 
%\begin{equation}\label{eq11}
%|\B_\alpha|=\sum_{\mathbf{i}\in \Z_{\geq 0}^N} |c_{\mathbf{i}}|\Del_{\be}^{\mathbf{i}}.
%\end{equation}
\end{Def}

Although the operators $\Del_{e_i}$ act as  bounded operators in certain Hilbert spaces (for example, on $L^2(\Re_+)$ they have norm $2$), the operator $\B_\alpha$ does not acquire a natural domain through standard techniques (holomorphic functional calculus)  without stronger assumptions on the domain of convergence of the multi-power series $B_\alpha(z)$. Instead, we will consider $\B_\alpha$ as an operator with domain the vector space $\D(\B_\alpha)$  of  $\Co$-valued functions $f(t)$ on $\Re_+$  for which the series
\begin{equation}\label{series-def}
\sum_{\mathbf{i}\in \Z_{\geq 0}^N} c_{\mathbf{i}}\Del_{\be}^{\mathbf{i}}f(t)
\end{equation}
converges absolutely and locally uniformly in $t$. In particular, on this domain, $\B_\alpha$ preserves the local integrability and continuity of the argument. 
\begin{Rem}
Although, by definition, the operator $\B_\alpha$ depends in principle on the multi-power series $B_\alpha$ rather than on $\alpha$, we will see that on certain spaces of functions the action of $\B_\alpha$ depends only on $\alpha$.
\end{Rem}
%%%%%%%%%%%%%%%%%%%%%%%%%%%%%%%%%%%%%%%%%%%%%%%%%%%%%%%%%%%%%%%%%%%%%%%%%%%%%
\subsection{}
We also need to consider the following function defined for $u$ in a neighborhood of $[0,\infty)$
$$
\beta(u)=B_\alpha(e^{-u})=u^\nu \alpha(e^{-u}).
$$
We can represent $\beta(u)$ on $\Re_+$ using the the power series expansion for $u^\nu \alpha(e^{-u})$ in $z=e^{-u}$ or the multi-power series expansion $B_\alpha(e^{-u})$  in $z=e^{-u}$. We note that 
\begin{equation}\label{eq19}
\beta(u)=O(1),\quad u\to 0^+\quad \text{and} \quad  \beta(u)=O(u^\nu), \quad u\to +\infty.
\end{equation}
\begin{Rem}\label{rem-expansion}
The series $B_\alpha(z)$ converges to $(-1)^\nu\ln(z)^\nu\alpha(z)$ and for $z$ in a neighborhood of $(0,1]$, in particular, for $z$ in a neighborhood of $1$. Therefore, for $u\in \Co$ in a neighborhood of $0$ we have that the series $B_\alpha(e^{-u})$ (the formal composition of the series $B_\alpha(z)$ and  the Taylor series of $e^{-u}$ around $u=0$) and the Taylor series of $\beta(u)=u^\nu \alpha(e^{-u})$ around $u=0$ coincide by the uniqueness of the power series expansion of a holomorphic function around $u=0$. 
\end{Rem}
\begin{Def}\label{def-todd}
Let $\T_\alpha(\del)$ denote the infinite order operator defined by substituting $u=\del$ in the Taylor expansion of $\beta(u)$ around $u=0$. The domain of $\T_\alpha(\del)$ is considered to be the space of polynomial functions in one variable. We will call $\T_\alpha(\del)$ the Todd operator associated to $\alpha$. The infinite order operator $$\T_\alpha=\T_\alpha(-\del)$$ is defined similarly, by substituting $u=-\del$ in the Taylor expansion of $\beta(u)$ around $u=0$.
\end{Def}

\begin{Rem} The terminology is justified by the fact that the operator $\T(\del)$ associated to the geometric series, formally written as
$$
\T(\del)=\frac{\del}{1-e^{-\del}}
$$
is precisely the Todd operator of  \cite{KhPu}. The corresponding function $\beta(u)$,  among other places, appears in algebraic topology as the characteristic power series for the total Todd class of a line bundle \cite{HirTop}*{\S10.1}. The Riemann-Roch theorem relates the Euler-Poincar\' e characteristic with a certain integral defined by the Todd class and the Chern character.  One of the main results of \cite{KhPu} is a Riemann-Roch theorem in combinatorial geometry, where the integral is replaced by the action of a (multi-dimensional) Todd operator.
\end{Rem}

\begin{Not}
We denote by $\T$ the operator $\T(-\del)$ associated to the  geometric series. Specifically, it is the operator obtained by substituting  $u=\del$ in the Taylor expansion of $\displaystyle\frac{u}{e^{u}-1}$ around $u=0$. The coefficients in the expansion are $B_k/k!$ where $B_k$ are the Bernoulli numbers.
\end{Not}

\begin{Rem}\label{rem-T}
We can write
$$
\T_\alpha=\T_{\alpha_p}+(-1)^\nu \del^\nu \T_{\alpha_h}\quad \text{and}\quad \T_{\alpha_p}=(-1)^\nu \sum_{n=1}^\nu k_n \T^n \del^{\nu-n}.
$$
\end{Rem}

\begin{Prop}\label{rem-todd}
On polynomial functions we have 
$$
\T_\alpha=\B_\alpha.
$$
\end{Prop}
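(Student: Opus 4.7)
The plan is to exploit that on polynomials both $\B_\alpha$ and $\T_\alpha$ become finite-order operators and, in fact, can be obtained by substituting $v\mapsto\del$ in the same formal power series. First I would observe that on any polynomial $P$ of degree $n$ the shift operator acts as a terminating exponential $\E_h P = \sum_{k=0}^{n} \tfrac{h^k}{k!}\del^k P$; hence each $\Del_{e_j} = \E_{e_j}-\id$ strictly lowers degree, so $\Del_\be^{\mathbf{i}}P = 0$ as soon as $|\mathbf{i}|>n$ and the series \eqref{series-def} collapses to the finite sum over $|\mathbf{i}|\leq n$. Likewise $\T_\alpha = \beta(-\del)$ acts as $\sum_{k=0}^{n}\tfrac{(-1)^k\beta^{(k)}(0)}{k!}\del^k$ on polynomials of degree $\leq n$.

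Next I would view both truncations as coming from a common formal series in a variable $v$ via the substitution $v\mapsto\del$. Replacing $\del$ by $v$ in the expansion of $\B_\alpha$ gives
\[
F_1(v) \;=\; \sum_{\mathbf{i}\in \Z^N_{\geq 0}} c_{\mathbf{i}} \prod_{j=1}^N \bigl(e^{e_j v} - 1\bigr)^{i_j},
\]
which is the formal composition $B_\alpha(e^v)$. Since each factor $e^{e_j v} - 1$ vanishes at $v = 0$, the coefficient of $v^k$ in $F_1$ is a \emph{finite} sum over $|\mathbf{i}|\leq k$. Correspondingly $\T_\alpha$ comes from the formal series $F_2(v) = \beta(-v)$, the Taylor expansion of $(-v)^\nu\alpha(e^v)$ at $v=0$.

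The identification $F_1 = F_2$ is precisely the statement of Remark~\ref{rem-expansion}: formally composing the multi-power series $B_\alpha(z)$ with the exponential series $z = e^v$ produces the unique Taylor series at $v = 0$ of the holomorphic function $B_\alpha(e^v) = (-v)^\nu\alpha(e^v) = \beta(-v)$. Matching coefficients of $v^k$ for each $k$ and then setting $v = \del$ yields $\B_\alpha P = \T_\alpha P$ on every polynomial of degree $\leq n$, and hence on all polynomials since $n$ is arbitrary.

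The only real bookkeeping issue, and the step that I expect to merit the most care, is justifying the rearrangement of the double sum $\sum_{\mathbf{i}} c_{\mathbf{i}} \Del_\be^{\mathbf{i}} P$ as a finite-order differential operator $\sum_k \gamma_k \del^k P$: one must interchange the $\mathbf{i}$-sum with the expansion in powers of $\del$. This is legitimate because for each fixed $k$ only the finitely many multi-indices with $|\mathbf{i}|\leq k$ can contribute to the coefficient of $\del^k$, so both the formal identification $F_1 = F_2$ and the resulting operator identity on polynomials are unambiguous, completing the proof.
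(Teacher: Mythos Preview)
Your proposal is correct and follows essentially the same approach as the paper: both arguments rest on Remark~\ref{rem-expansion} to identify the formal series $B_\alpha(e^v)$ with the Taylor expansion of $\beta(-v)$, and then use that on polynomials $e^{\del}=\E$, so $\B_\alpha=B_\alpha(\E)=B_\alpha(e^{\del})=\T_\alpha$. You supply more explicit bookkeeping than the paper does---namely, that each $\Del_{e_j}$ lowers degree so the $\B_\alpha$-series truncates, and that the coefficient of $\del^k$ in $F_1$ involves only finitely many $\mathbf{i}$ so the rearrangement is purely formal---but the underlying idea is identical.
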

\begin{proof}
From Remark \ref{rem-expansion} we infer that $\T_\alpha(-\del)$ can be defined by using the formal composition of the power series $B_\alpha(z)$  and the Taylor series of $e^{u}$  around $u=0$. In particular, for any function in its domain, $\T_\alpha(-\del)$ can be computed as $B_\alpha(e^{\del})$. However, on polynomial functions $e^{\del}$ acts as the forward shift operator $\E$. We obtain that 
$$
\T_\alpha=\B_\alpha
$$
on polynomial functions.
\end{proof}
%%%%%%%%%%%%%%%%%%%%%%%%%%%%%%%%%%%%%%%%%%%%%%%%%%%%%%

\subsection{}
The operator $\B_\alpha$ can be regarded as an extension of action of the operator $\T_\alpha$ on polynomial functions. In search for a canonical domain, this fact opens the possibility of  using functional calculus for the operator $\del$ instead of that for $\Del$ or $\E$ in defining the operator $\B_\alpha$. In this case, the relevant context is that of \cite{BadOpe} which applies to closed, unbounded operators with spectrum contained  in a vertical strip. The spectrum of the operator $\del$ (acting on smooth functions on the real line) is the imaginary axis. To use the functional calculus, $\beta(u)$ would have to be, among other hypotheses, holomorphic in a vertical strip containing the imaginary axis. Our hypothesis on $\alpha(z)$ only guarantees that  $\beta(u)$ is holomorphic in a neighborhood of $0$ and, if $\alpha(z)$ has a pole at $z=1$ then $\beta(u)$ will have poles at the non-zero points of $2\pi i \Z$. Therefore, the functional calculus is largely incompatible with our situation. 

A certain renormalization allows one to extend the applicability of functional calculus. Specifically,  in \cite{HPFun} the semigroup associated to a closed operator (the semigroup would correspond to $e^{\del}$ in this situation) is defined in the usual fashion as the functional calculus for $e^{z}$ except that Ces\` aro (C,1) summation  is used to renormalize the integral. In this case,  the resulting (semi-)group  is $e^{\del}=\E$.  Overall, this has the same effect as constructing $\T_\alpha$ as $(-1)^\nu \del^\nu\A_\alpha$, leading us back to using discrete operators.

%%%%%%%%%%%%%%%%%%%%%%%%%%%%%%%%%%%%%%%%%%%%%%%%%%%%%%

\subsection{} Let us consider the following polynomials.

\begin{Def}\label{def-bpoly}
For $n\geq 0$ define the $n$-th Bernoulli polynomial associated to $\alpha$ as
$$
B_\alpha[n;t]=\T_\alpha(-d)(t^n)=\B_\alpha(t^n).
$$
\end{Def}

\begin{Rem}
The polynomials $B[n;t]=\T(t^n)$ associated to the geometric series are precisely the classical Bernoulli polynomials. For $a\in \Z_{>0}$, the polynomials $B^{(a)}[n;t]=\T^a(t^n)$ are the generalized Bernoulli polynomials; in this case $\T^a=\T_\alpha$ for $\alpha(z)=(1-z)^{-a}$. We note that the generalized Bernoulli polynomials are defined in the same fashion for any $a\in \Co$.
\end{Rem}

\begin{Rem}\label{rem-bgs}
An equivalent way to define the polynomials $B_\alpha[t;n]$ is through their exponential generating series
$$
\sum_{n=0}^{\infty} B_\alpha[n;t]\frac{u^n}{n!}=(-u)^\nu \alpha(e^u)e^{tu}.
$$

\end{Rem}

\begin{Prop}\label{prop-top}
$B_\alpha[n;t]$, $n\geq 0$, is a polynomial of degree $n$; its top degree coefficient is $(-1)^\nu k_\nu$. In particular, $B_\alpha[0;t]=(-1)^\nu k_\nu$. Furthermore, 
\begin{equation}
\del B_\alpha[n;t]=n B_\alpha[n-1; t], \quad n\geq 1.
\end{equation}

\end{Prop}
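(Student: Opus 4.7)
The plan is to derive all four claims from the exponential generating function
\[
\sum_{n\geq 0}B_\alpha[n;t]\frac{u^n}{n!}=(-u)^\nu\alpha(e^u)e^{tu}
\]
recorded in Remark \ref{rem-bgs}. This identity is itself immediate from the definition $B_\alpha[n;t]=\T_\alpha(t^n)$: since $\T_\alpha=\beta(-\del)$ where $\beta(u)=u^\nu\alpha(e^{-u})$, and $\del e^{tu}=ue^{tu}$, applying $\T_\alpha$ termwise to $e^{tu}=\sum t^n u^n/n!$ yields $(-u)^\nu\alpha(e^u)e^{tu}$. All operator manipulations are rigorous on polynomials because there the series defining $\T_\alpha$ truncates to a finite-order differential operator.

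First I would verify that the prefactor $h(u):=(-u)^\nu\alpha(e^u)$ is holomorphic at $u=0$ and compute $h(0)=(-1)^\nu k_\nu$. Writing $\alpha=\alpha_p+\alpha_h$ and using $e^u-1=u\,g(u)$ with $g$ holomorphic at $0$ and $g(0)=1$, each polar term $k_j(e^u-1)^{-j}$ multiplied by $(-u)^\nu$ becomes $(-1)^\nu k_j u^{\nu-j}g(u)^{-j}$, which is holomorphic near $0$ and vanishes there whenever $j<\nu$. The holomorphic contribution $(-u)^\nu\alpha_h(e^u)$ is likewise holomorphic, and vanishes at $u=0$ when $\nu\geq 1$ while reducing to $\alpha_h(1)=k_0$ when $\nu=0$. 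In both regimes the value at $u=0$ is $(-1)^\nu k_\nu$.

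Next I would expand $h(u)=\sum_{k\geq 0}h_k u^k/k!$ with $h_0=(-1)^\nu k_\nu$ and form the Cauchy product with the series of $e^{tu}$, reading off
\[
B_\alpha[n;t]=\sum_{k=0}^{n}\binom{n}{k}h_k\,t^{\,n-k}.
\]
This is a polynomial in $t$ of degree at most $n$ with coefficient of $t^n$ equal to $h_0=(-1)^\nu k_\nu\neq 0$, hence of degree exactly $n$; specializing $n=0$ gives $B_\alpha[0;t]=(-1)^\nu k_\nu$.

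For the derivative identity, the slickest route is to observe that $\T_\alpha$, being a formal power series in $\del$, commutes with $\del$; on a polynomial the series truncates so no convergence issue arises, and therefore
\[
\del B_\alpha[n;t]=\del\,\T_\alpha(t^n)=\T_\alpha(\del t^n)=n\,\T_\alpha(t^{n-1})=n\,B_\alpha[n-1;t].
\]
Alternatively, one can differentiate the coefficient formula above and use $\binom{n}{k}(n-k)=n\binom{n-1}{k}$. There is no serious obstacle in the argument; the only point requiring mild care is the evaluation $h(0)=(-1)^\nu k_\nu$ in the boundary case $\nu=0$, which relies on the convention $k_0=\alpha_h(1)$.
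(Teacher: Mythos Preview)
Your argument is correct and complete. You proceed via the exponential generating function of Remark~\ref{rem-bgs}: you verify that $h(u)=(-u)^\nu\alpha(e^u)$ is holomorphic at $u=0$ with $h(0)=(-1)^\nu k_\nu$, then read off $B_\alpha[n;t]=\sum_{k=0}^n\binom{n}{k}h_k\,t^{n-k}$ from the Cauchy product with $e^{tu}$, which immediately yields the degree and leading coefficient. The paper instead argues operator-theoretically from the decomposition in Remark~\ref{rem-T}, namely $\T_\alpha=\T_{\alpha_p}+(-1)^\nu\del^\nu\T_{\alpha_h}$ with $\T_{\alpha_p}=(-1)^\nu\sum_{n=1}^\nu k_n\T^n\del^{\nu-n}$: the point is that $\del^\nu\T_{\alpha_h}$ drops degree while each $\T^n$ preserves the top monomial, so only the term $k_\nu\T^\nu$ contributes to the leading coefficient. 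Your route is more explicit (it produces a closed formula for all coefficients of $B_\alpha[n;t]$, not just the top one) and essentially self-contained; the paper's route stays closer to the operator framework and makes the role of $\alpha_p$ versus $\alpha_h$ more transparent for the subsequent Proposition~\ref{prop-taylor} and Remark~\ref{rem-Bder}. For the derivative identity both arguments are the same: commute $\del$ with $\T_\alpha$.

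One small remark: your assertion that $h_0=(-1)^\nu k_\nu\neq 0$ is immediate for $\nu\geq 1$ but, as you note, relies on the convention $k_0=\alpha_h(1)$ when $\nu=0$; in that boundary case nothing in the definition of tame forces $\alpha(1)\neq 0$, so strictly speaking ``degree exactly $n$'' could fail. The paper's proof has the same implicit assumption, so this is an issue with the statement rather than with your argument.
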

\begin{proof}
If $\nu>0$ the operator $\del^\nu \T_{\alpha_h}$ acting on polynomials drops the degree. On the other hand, the action of $\T$ preserves the top degree monomial of the argument. The claim is now clear 
from the definition of $B_\alpha[n;t]$. Similarly, if $\nu=0$ then $\alpha_p(z)=0$ and the $\T_\alpha-k_0\id$ action on polynomials drops the degree.
\end{proof}

\begin{Rem}\label{rem-Bder}
For $0\leq n\leq \nu-1$, we have $\T_{\alpha_h}\del^\nu (t^n)=0$ and therefore $B_\alpha[n;t]=B_{\alpha_p}[n;t]$. The polynomial of largest degree among these is $B_\alpha[\nu-1; t]$. Because $\del$ commutes with $\T_\alpha$ we obtain that 
\begin{equation}\label{eq15}
\del^{n-1} B_\alpha[\nu-1;t]=(\nu-1)^{\underline{n-1}} B_\alpha[\nu-n; t], \quad 1\leq n\leq \nu.
\end{equation}
\end{Rem}

%%%%%%%%%%%%%%%%%%%%%%%%%%%%%%%%%%%%%%%%%%%%%%%%%%%%%%%%%%%%%%%%%%%%%%%%%%%%%
\subsection{} Any polynomial of degree $\nu-1$ will appear as the $\nu-1$-th Bernoulli polynomial for some tame series. 
\begin{Prop}\label{prop-taylor}
Let $t_0\in \Re_+$ and $P[t]=p_\nu(t-t_0)^{\nu-1}+\cdots+p_2(t-t_0)+p_1$, $p_\nu\neq 0$ a polynomial with complex coefficients. Then, there exist a tame series $\alpha(z)$ such that $B_\alpha[\nu-1;t]= P[t]$. Furthermore, all such tame series $\alpha(z)$ have the same $\alpha_p(z)$ and  
\begin{equation}\label{eq13}
B_\alpha[\nu-n;t_0]= \frac{(n-1)!}{(\nu-1)^{\underline{n-1}}} p_n,\quad 1\leq n\leq \nu.
\end{equation}
\end{Prop}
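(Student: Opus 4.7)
The plan is to reduce the problem to a linear algebra statement about Laurent principal parts at $z=1$. By Remark~\ref{rem-Bder}, any tame $\alpha$ with pole of order $\nu$ at $z=1$ satisfies $B_\alpha[n;t]=B_{\alpha_p}[n;t]$ for $0\leq n\leq \nu-1$; in particular, $B_\alpha[\nu-1;t]$ depends only on the principal part $\alpha_p(z)=\sum_{k=1}^\nu k_k(z-1)^{-k}$. Thus both the existence of a tame $\alpha$ realizing $P[t]$ and the uniqueness of $\alpha_p$ will follow once I show that the linear map
$$
\Phi\colon \Co^\nu\longrightarrow V_{\nu-1},\qquad (k_1,\dots,k_\nu)\longmapsto B_{\alpha_p}[\nu-1;t],
$$
into the space $V_{\nu-1}$ of polynomials of degree at most $\nu-1$ is an isomorphism that restricts to a bijection between $\{k_\nu\neq 0\}$ and the subset of polynomials of degree exactly $\nu-1$.

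To establish this bijection I would compute $\Phi$ explicitly using the generating function of Remark~\ref{rem-bgs}. Substituting $\alpha=\alpha_p$, writing $u^\nu(e^u-1)^{-k}=u^{\nu-k}(u/(e^u-1))^k$, and using the generating function $(u/(e^u-1))^k e^{tu}=\sum_{m\geq 0} B^{(k)}[m;t]\,u^m/m!$ of the generalized Bernoulli polynomials, extracting the coefficient of $u^{\nu-1}/(\nu-1)!$ yields
$$
B_{\alpha_p}[\nu-1;t]=(-1)^\nu\sum_{k=1}^\nu k_k\,\frac{(\nu-1)!}{(k-1)!}\,B^{(k)}[k-1;t].
$$
Since $(u/(e^u-1))^k=1+O(u)$, the polynomial $B^{(k)}[k-1;t]$ has degree $k-1$ with leading coefficient $1$, so $\{B^{(k)}[k-1;t]\}_{k=1}^\nu$ is a triangular basis of $V_{\nu-1}$. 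Consequently $\Phi$ is a linear isomorphism, and the coefficient of $t^{\nu-1}$ in $\Phi(k_1,\dots,k_\nu)$ equals $(-1)^\nu k_\nu$, so $k_\nu\neq 0$ is equivalent to the image having degree exactly $\nu-1$.

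Given $P[t]$ with $p_\nu\neq 0$, the bijection $\Phi$ furnishes a unique tuple $(k_1,\dots,k_\nu)$ with $k_\nu\neq 0$ mapping to $P[t]$, which yields the uniqueness of $\alpha_p$. For existence, I would take $\alpha(z)=\alpha_p(z)$ with these coefficients: this is holomorphic away from $z=1$ with a pole of order exactly $\nu$ there, and since $(z-1)^\nu\alpha_p(z)=\sum_{k=1}^\nu k_k(z-1)^{\nu-k}$ is already a polynomial (a trivially convergent multi-power series around $z=1$), $\alpha_p$ satisfies Definition~\ref{def-ts}. Finally, to obtain \eqref{eq13} I would apply $\del^{n-1}$ to both sides of $B_\alpha[\nu-1;t]=P[t]$ and evaluate at $t=t_0$: the right-hand side yields $(n-1)!\,p_n$ directly from the Taylor expansion of $P[t]$, while \eqref{eq15} identifies the left-hand side with $(\nu-1)^{\underline{n-1}}\,B_\alpha[\nu-n;t_0]$, and solving gives the stated formula. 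The main obstacle I anticipate is organizing the generating-function calculation cleanly and verifying the triangularity of $\{B^{(k)}[k-1;t]\}$; the rest is essentially bookkeeping.
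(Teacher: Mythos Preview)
Your proposal is correct and follows essentially the same route as the paper: both arguments show that the map from principal parts $(k_1,\dots,k_\nu)$ to $B_{\alpha_p}[\nu-1;t]$ is a linear isomorphism by exhibiting a triangular basis, then take $\alpha=\alpha_p$ for existence and derive \eqref{eq13} from \eqref{eq15}. The only cosmetic difference is that the paper reaches the basis $\{\T^n\del^{\nu-n}(t^{\nu-1})\}_{1\le n\le\nu}$ directly via Remark~\ref{rem-T}, whereas you compute through the generating function of Remark~\ref{rem-bgs}; since $\T^n\del^{\nu-n}(t^{\nu-1})=\frac{(\nu-1)!}{(n-1)!}B^{(n)}[n-1;t]$, your basis $\{B^{(k)}[k-1;t]\}$ is the same one up to rescaling.
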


\begin{proof}
Let us first remark that the set $\{T^n \del^{\nu-n} (t^{\nu-1})\}_{1\leq n\leq \nu}$ is a basis for $\Co$-vector space of polynomials of degree at most $\nu-1$. This is because the expansion in the monomial basis of the indicated set is a uni-triangular matrix. We conclude that there exist unique constants $\{q_n\}_{1\leq n\leq \nu}$ such that 
$$
\sum_{n=1}^\nu q_n T^n \del^{\nu-n}(t^{\nu-1})=(-1)^\nu P[t].
$$
Note that $q_\nu=(-1)^\nu p_\nu\neq 0$. Now, 
$$
\Upsilon(z):=q_\nu (z-1)^{-\nu}+\cdots+q_1 (z-1)^{-1} 
$$ 
represents a tame power series with the required property. From Remark \ref{rem-T} we obtain that in fact for any such $\alpha(z)$ we must have that $\alpha_p(z)=\Upsilon(z)$. The remaining claim follows from Remark \ref{rem-Bder}.
\end{proof}
%%%%%%%%%%%%%%%%%%%%%%%%%%%%%%%%%%%%%%%%%%%%%%%%%%%%%%%%%%%%%%%%%%%%%%%%%%%%%
\subsection{} In fact, we can recover the holomorphic function $\alpha(z)$ from the Bernoulli polynomials.
\begin{Prop}\label{prop-u1}
The polynomials $B_\alpha[n;t]$, $n\geq 0$ uniquely determine $\alpha(z)$.
\end{Prop}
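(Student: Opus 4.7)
The plan is to exploit the exponential generating function identity recorded in Remark \ref{rem-bgs},
$$\sum_{n=0}^{\infty} B_\alpha[n;t]\frac{u^n}{n!}=(-u)^\nu \alpha(e^u)e^{tu},$$
which expresses the Bernoulli polynomials as Taylor coefficients of an explicit analytic function of $u$. Fixing $t=0$, the sequence of values $B_\alpha[n;0]$ uniquely determines the formal power series $\sum_n B_\alpha[n;0] u^n/n!$, which by the identity equals $(-u)^\nu \alpha(e^u)$. Because $\alpha$ has a pole of order at most $\nu$ at $z=1$ (tame hypothesis), the function $(-u)^\nu \alpha(e^u)$ is holomorphic in a neighborhood of $u=0$, so this formal series actually converges there and represents it.

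First I would divide the recovered power series by $(-u)^\nu$ to obtain $\alpha(e^u)$ as a meromorphic function in a neighborhood of $u=0$. Since $u\mapsto e^u$ is a biholomorphism from some neighborhood of $u=0$ onto a neighborhood $V$ of $z=1$, this yields $\alpha(z)$ on $V\setminus\{1\}$. Since $\alpha$ is tame, in particular holomorphic on the connected open set $D(0,1)$, and $V\cap D(0,1)$ is a non-empty open subset of $D(0,1)$, the identity theorem forces the agreement between the function recovered from the generating series and $\alpha$ to extend to all of $D(0,1)$. Hence the Taylor coefficients $a_n$ at $z=0$ are recovered, and $\alpha(z)$ is uniquely determined as a tame power series.

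The main (mild) obstacle is bookkeeping: one must verify that the formal equality in Remark \ref{rem-bgs} is a convergent power series equality in a neighborhood of $u=0$, which is ensured by the analyticity of $(-u)^\nu \alpha(e^u)$ at $u=0$. Beyond this, the argument is essentially a direct inversion of the generating function together with one invocation of the identity theorem. Note that the choice $t=0$ is purely for convenience; the same reasoning applied at any $t=t_0\in \Re_+$ recovers $(-u)^\nu \alpha(e^u)$ after multiplication by $e^{-t_0 u}$, so the polynomials $B_\alpha[n;t_0]$ at a single $t_0$ already suffice to determine $\alpha(z)$.
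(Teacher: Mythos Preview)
Your argument is correct and takes a genuinely different route from the paper. The paper proceeds algebraically: it recovers $\alpha_p(z)$ from $B_\alpha[\nu-1;t]$ via the basis argument of Proposition~\ref{prop-taylor}, and then extracts the Taylor coefficients of $\alpha_h(z)$ around $z=1$ using the Fa\`a di Bruno formula \eqref{eq14} together with a second linear-independence argument in the polynomial family $\{\T^n\del^{\nu-n}(t^{N+\nu})\}\cup\{\del^{\nu+n}(t^{N+\nu})\}$. You instead read off the entire holomorphic germ of $(-u)^\nu\alpha(e^u)$ at $u=0$ directly from Remark~\ref{rem-bgs}, invert the local biholomorphism $u\mapsto e^u$, and conclude with the identity theorem on $D(0,1)$. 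This is more direct and, as you observe at the end, already yields the stronger statement that the single evaluation sequence $(B_\alpha[n;t_0])_{n\geq 0}$ suffices --- effectively absorbing Proposition~\ref{prop-u2} and giving Theorem~\ref{t4} in one stroke. What the paper's approach buys in exchange is an explicit coefficient-by-coefficient reconstruction of the Laurent expansion at $z=1$, which is the form invoked downstream in Remark~\ref{rem-unq} and Remark~\ref{rem-ext}.

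One small point worth tightening: when you ``divide the recovered power series by $(-u)^\nu$'' you are using the pole order $\nu$, which is attached to $\alpha$ rather than to the bare sequence of polynomials. If the proposition is read as injectivity of $\alpha\mapsto(B_\alpha[n;t])_n$ across all tame series, a line is needed to exclude two tame series with different pole orders sharing the same Bernoulli polynomials. This is immediate from your setup: equality of the generating germs would force $\alpha_1(z)=(-\ln z)^{\nu_2-\nu_1}\alpha_2(z)$ near $z=1$, and continuing this identity into $D(0,1)$ is incompatible with both $\alpha_i$ being single-valued holomorphic there unless $\nu_1=\nu_2$. The paper's proof carries the same tacit dependence on $\nu$, so this is not a defect of your method relative to the original.
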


\begin{proof}
Since $\alpha(z)$ represents a meromorphic function around $z=1$, it is enough to argue that the Laurent expansion of $\alpha(z)$ around $z=1$ can be recovered from the Bernoulli polynomials. From Proposition \ref{prop-taylor} we know that $\alpha_p(z)$ can be recovered from $B_\alpha[\nu-1;t]$. We will show that $\alpha_h(z)$ can be similarly recovered. 
Denote by
$$
\alpha_h(z)=\sum_{n\geq 0} \frac{\varphi_n}{n!} (z-1)^n
$$
the expansion of $\alpha_h(z)$ around $z=1$. From the Fa\`a di Bruno formula \eqref{eq14} we know that formally
$$
\T_{\alpha_h}=\alpha_h(e^{\del})=\sum_{n\geq 0} \frac{\psi_n}{n!}\del^n, \quad \text{where}\quad \psi_0=\varphi_0 ~\text{and}~ \psi_n=\sum_{k=1}^n \Sn{n}{k}\varphi_k, ~n\geq 1.
$$
From the explicit expression relating $\psi_n$ and $\varphi_n$, $n\geq 0$, it is clear that $\alpha_h(z)$ is uniquely determined by the sequence $\psi_n$, $n\geq 0$.

For any $N\geq 0$, we show that $\psi_n$, $0\leq n\leq N$, can be recovered from $B_\alpha[N+\nu;t]$. Indeed,  the set $$\{T^n \del^{\nu-n} (t^{N+\nu})\}_{1\leq n\leq \nu}\cup \{\del^{\nu+n} (t^{N+\nu})\}_{0\leq n\leq N}$$ is a basis for $\Co$-vector space of polynomials of degree at most $N+\nu$. (Again, the expansion into the monomial basis produces a triangular matrix). Therefore, there is a unique linear combination of the elements of this basis that produce the polynomial $B_\alpha[N+\nu;t]$. Hence, $\psi_n$, $n\geq 0$, are uniquely determined.
\end{proof}

\subsection{} Another relevant fact is the following.

\begin{Prop}\label{prop-u2}
Let $t_0\in \Re_+$. The Bernoulli polynomials $B_\alpha[n;t]$, $n\geq 0$, are determined by the sequence $B_\alpha[n;t_0]$, $n\geq 0$.
\end{Prop}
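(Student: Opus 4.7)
The plan is to use the derivative relation in Proposition \ref{prop-top}, namely $\del B_\alpha[n;t] = n B_\alpha[n-1;t]$ for $n\geq 1$, together with the fact that $B_\alpha[n;t]$ is a polynomial of degree $n$ (so it is recovered from its Taylor expansion at any point).

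First I would iterate the derivative formula to obtain, for $0\leq k\leq n$,
$$
\del^k B_\alpha[n;t] = n^{\underline{k}}\, B_\alpha[n-k;t].
$$
This follows by a straightforward induction on $k$ using Proposition \ref{prop-top}.

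Next, since $B_\alpha[n;t]$ is a polynomial of degree $n$ by Proposition \ref{prop-top}, Taylor's formula centered at $t_0$ gives
$$
B_\alpha[n;t] = \sum_{k=0}^{n} \frac{1}{k!}\bigl(\del^k B_\alpha[n;t]\bigr)\big|_{t=t_0}\,(t-t_0)^k = \sum_{k=0}^{n} \binom{n}{k}\, B_\alpha[n-k;t_0]\,(t-t_0)^k,
$$
using the identity $n^{\underline{k}}/k! = \binom{n}{k}$. This explicit formula shows that each $B_\alpha[n;t]$ is completely determined (as a polynomial in $t$) by the values $B_\alpha[m;t_0]$ for $0\leq m\leq n$, which proves the proposition.

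There is no real obstacle here; the only point worth emphasizing is that Proposition \ref{prop-top} provides both ingredients required for the argument (the degree bound and the derivative identity), so the proof reduces to Taylor's theorem applied to a polynomial. One could equivalently package the argument by observing that the generating series identity in Remark \ref{rem-bgs} yields
$$
\sum_{n=0}^\infty B_\alpha[n;t]\frac{u^n}{n!} = e^{(t-t_0)u}\sum_{n=0}^\infty B_\alpha[n;t_0]\frac{u^n}{n!},
$$
which again recovers the $B_\alpha[n;t]$ from the sequence $(B_\alpha[n;t_0])_{n\geq 0}$.
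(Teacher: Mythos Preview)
Your proof is correct and follows essentially the same approach as the paper: both use the derivative relation $\del B_\alpha[n;t]=nB_\alpha[n-1;t]$ from Proposition~\ref{prop-top} to identify the Taylor coefficients of $B_\alpha[n;t]$ at $t_0$ with the values $B_\alpha[n-k;t_0]$. The paper packages this as an induction on $n$, while you unwind it into the explicit closed formula $B_\alpha[n;t]=\sum_{k=0}^{n}\binom{n}{k}B_\alpha[n-k;t_0](t-t_0)^k$, but the substance is identical.
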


\begin{proof}
We show by induction on $n\neq 0$, the coefficients of the Taylor expansion of $B_\alpha[n;t]$ around $t=t_0$ are determined by  $B_\alpha[k;t_0]$, $0\leq k\leq n$. For $n=0$, the polynomial $B_\alpha[0;t]$ is constant so it is determined by the value at any point. Assume that $n> 0$. From Proposition \ref{prop-top} and the induction hypothesis, all the coefficients of the Taylor expansion of $B_\alpha[n;t]$ around $t=t_0$ are determined except for the constant term. But the constant term is precisely $B_\alpha[n;t_0]$.
\end{proof}
\begin{Thm}\label{t4}
Let $t_0\in \Re_+$. The holomorphic function $\alpha(z)$ is uniquely determined by the sequence $B_\alpha[n;t_0]$, $n\geq 0$.
\end{Thm}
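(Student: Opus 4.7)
The plan is to prove Theorem \ref{t4} by chaining the two previous uniqueness results, Proposition \ref{prop-u2} and Proposition \ref{prop-u1}, which together supply exactly the required implication. The map we need to show is injective factors as
\[
\alpha(z) \;\longmapsto\; \bigl(B_\alpha[n;t]\bigr)_{n\geq 0} \;\longmapsto\; \bigl(B_\alpha[n;t_0]\bigr)_{n\geq 0},
\]
and our job is to argue that the composition is injective by arguing that each arrow is.

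First I would invoke Proposition \ref{prop-u2}: the sequence of scalars $B_\alpha[n;t_0]$, $n\geq 0$, determines the sequence of polynomials $B_\alpha[n;t]$, $n\geq 0$. This is the step that exploits the fact that, by Proposition \ref{prop-top}, $\del B_\alpha[n;t] = n\,B_\alpha[n-1;t]$, so that by induction on $n$ all Taylor coefficients of $B_\alpha[n;t]$ at $t=t_0$ except the constant term are pinned down from lower-indexed data, while the constant term is simply the given value $B_\alpha[n;t_0]$.

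Second, I would apply Proposition \ref{prop-u1}: the polynomials $B_\alpha[n;t]$, $n\geq 0$, uniquely determine $\alpha(z)$. This step is where the actual content of uniqueness lives: the principal part $\alpha_p(z)$ is recovered from $B_\alpha[\nu-1;t]$ via Proposition \ref{prop-taylor}, and the holomorphic part $\alpha_h(z)$ is recovered from the higher Bernoulli polynomials using the Fa\`a di Bruno inversion relating its Taylor coefficients $\varphi_n$ at $z=1$ to the coefficients $\psi_n$ of $\T_{\alpha_h}$.

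Concatenating the two implications gives the theorem. There is essentially no obstacle here beyond correctly citing the two propositions; the only nuance worth flagging explicitly is that Proposition \ref{prop-u1} recovers $\alpha(z)$ only as a meromorphic function in a neighborhood of $z=1$, but this suffices to determine the tame power series globally since $\alpha(z)$ is holomorphic on $D(0,1)$ and thus determined by its germ at $z=1$ by the identity principle.
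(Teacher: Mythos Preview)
Your proposal is correct and matches the paper's own proof, which simply states that the result is ``Straightforward from Proposition \ref{prop-u1} and Proposition \ref{prop-u2}.'' Your additional remark about the identity principle is a reasonable elaboration of what the paper addresses in Remark \ref{rem-unq}.
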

\begin{proof}
Straightforward from Proposition \ref{prop-u1} and Proposition \ref{prop-u2}.
\end{proof}

\begin{Rem}\label{rem-unq} We note that the only hypothesis about $\alpha(z)$ that was used is the fact that it is meromorphic in a neighborhood of $z=1$. While the argument above shows that for any family of polynomials there exists at most one corresponding $\alpha(z)$ it does not address the question of existence. A closer examination of the proof shows that it also proves the existence of a formal Laurent expansion around $z=1$ that produces a family of polynomials, compatible in the sense that they satisfy the properties specified by Proposition \ref{prop-top}. However, the analytic properties of this expansion, such as the fact that it represents a holomorphic function in $D(0,1)$ are not guaranteed without further assumptions on the family of Bernoulli polynomials. 
\end{Rem}

%%%%%%%%%%%%%%%%%%%%%%%%%%%%%%%%%%%%%%%%%%%%%%%%%%%%%%%%%%%%%%%%%%%%%%%%%%%%%
\subsection{} As it turns out, the functions $t^s\in\D(\B_\alpha)$ not only for  $s\in \Z_{\geq 0}$ but for $s\in \Co$. We will show first that $\Imag(\L)$ (which does not contain polynomial functions) is part of $\D(\B_\alpha)$.
%Aside from polynomial functions, some examples of functions in $\D(\B_\alpha)$ arise as follows.
\begin{Rem}\label{rem1}
Let $\B_\alpha$ as in \eqref{eq10} such that the series $F(\mathbf{z})=\sum_{\mathbf{i}\in \Z_{\geq 0}^N} c_{\mathbf{i}}(\mathbf{z}-\mathbf{1})^{\mathbf{i}}$ converges in $D(\mathbf{1},\mathbf{1})$. For example, the powers series $\alpha(z)$ as in Proposition \ref{prop-ML} have this property.  Let $f(t)$ such that for some continuous $\Z$-periodic functions $c_i(t)$, $1\leq i\leq N$, with values in $D(0,1)$ we have
\[
\Del_{e_i} f(t)=c_i(t) f(t).
\]
Then, the series \eqref{series-def} is absolutely convergent and $f(t)\in \D(\B_\alpha)$.
\end{Rem}

\begin{Exp} An example of this type is the exponential function $$f(t)=a^t.$$ If $a\in (0,2)$ then under the assumption that the series $F(\mathbf{z})=\sum_{\mathbf{i}\in \Z_{\geq 0}^N} c_{\mathbf{i}}(\mathbf{z}-\mathbf{1})^{\mathbf{i}}$ converges in $D(\mathbf{1},\mathbf{1})$, we obtain
$$
\B_\alpha(a^t)= a^t F(\lambda_\be(a)).
$$
If $a\in (0,1)$ then the convergence of $F(\mathbf{z})$ is not necessary since the convergence of  $\B_\alpha(a^t)$ follows from the convergence of the multi-power series $B_\alpha(z)$. In this case
$$
\B_\alpha(a^t)= a^tB_\alpha(a)=a^t (-1)^\nu\ln(a)^\nu \alpha(a)=(-1)^\nu \del^\nu(a^t)\alpha(a).
$$
\end{Exp}

%%%%%%%%%%%%%%%%%%%%%%%%%%%%%%%%%%%%%%%%%%%%%%%%%%%%%%%%%%%%%%%%%%%%%%%%%%%%%
\subsection{} As for the operator $\A_\alpha$, the basic example discussed above allows us to show that  $\Imag{\L}\subseteq \D(\B_\alpha)$.

\begin{Prop}\label{p1}
Let $f=\L(\varphi)\in \Imag(\L)$. Then, 
\begin{enumerate}[label={\roman*)}]
\item $f\in \D(\B_\alpha)$;
\item $\B_\alpha f=\L(\beta(u)\varphi(u))$;
\item $\ln(\id+\Del)f=\del f=\L(-u\varphi(u))$.
\end{enumerate}
In particular, $\B_\alpha$ can be considered as a linear operator
$
\B_\alpha: \Imag(\L) \to \Imag(\L).
$
\end{Prop}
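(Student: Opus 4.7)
The plan is to apply $\B_\alpha$ termwise to $f=\L(\varphi)$, justify the interchange of summation and integration via Fubini-Tonelli with a majorant drawn from the tame hypothesis on $B_\alpha$, and recognize the integrand as $\beta(u)\varphi(u)$.

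First I would record the basic intertwining identity. Since $\E_h\L(\varphi)(t)=\L(e^{-hu}\varphi)(t)$ for every $h>0$, subtracting yields $\Del_h\L(\varphi)=\L((e^{-hu}-1)\varphi)$, and iterating across the components of $\mathbf{i}\in\Z^N_{\geq 0}$ gives
$$
\Del_\be^{\mathbf{i}}f(t)=\L\bigl((\lambda_\be(e^{-u})-\mathbf{1})^{\mathbf{i}}\varphi(u)\bigr)(t).
$$
Next I would set up Fubini-Tonelli. Summing absolute values and interchanging sum and integral via monotone convergence,
$$
\sum_\mathbf{i}|c_\mathbf{i}|\int_0^\infty e^{-tu}|\lambda_\be(e^{-u})-\mathbf{1}|^{\mathbf{i}}|\varphi(u)|\,du=\int_0^\infty e^{-tu}|B_\alpha|(e^{-u})|\varphi(u)|\,du,
$$
with $|B_\alpha|(z):=\sum_\mathbf{i}|c_\mathbf{i}|\,|\lambda_\be(z)-\mathbf{1}|^\mathbf{i}$. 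The absolute and uniform convergence of $B_\alpha$ in a neighborhood of $(0,1]$ (Definition \ref{def-ts} and Remark \ref{rem-mps}) delivers control on $|B_\alpha|(e^{-u})$ compatible with the polynomial bound on $|\beta(u)|$ in \eqref{eq19}, namely $O(1)$ as $u\to 0^+$ and $O(u^\nu)$ as $u\to\infty$. Combined with $\int_0^\infty e^{-tu}u^\nu|\varphi(u)|\,du<\infty$ for every $t>0$ (a consequence of \eqref{eq2b} applied to $\varphi\in\D(\L)$), the right-hand side is finite with local uniformity in $t$. This proves (i), and the interchange immediately yields (ii): $\B_\alpha f=\L(B_\alpha(e^{-u})\varphi)=\L(\beta\varphi)$.

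For (iii) the same strategy applies to the scalar series $\ln(\id+\Del)=\sum_{n\geq 1}(-1)^{n+1}\Del^n/n$: termwise $\Del^nf=\L((e^{-u}-1)^n\varphi)$, the explicit majorant $\sum_{n\geq 1}\tfrac{1}{n}(1-e^{-u})^n=u$ combined with the integrability of $e^{-tu}u|\varphi(u)|$ licences the interchange, and the result is $\L(\ln(e^{-u})\varphi)=\L(-u\varphi)=\del f$, the last equality being differentiation under the integral. Finally $\B_\alpha f\in\Imag(\L)$ because $\beta\varphi\in\D(\L)$: the function $\beta(u)=u^\nu\alpha(e^{-u})$ is bounded as $u\to 0^+$ (the factor $u^\nu$ cancels the order-$\nu$ pole of $\alpha$ at $z=1$) and of polynomial growth at infinity by \eqref{eq19}, so multiplication by $\beta$ preserves absolute convergence of the Laplace transform on $\Co_{0^+}$.

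The principal technical point is bounding $|B_\alpha|(e^{-u})$ well enough to run Fubini-Tonelli, which amounts to controlling the series of absolute values $\sum|c_\mathbf{i}|\,|\lambda_\be(z)-\mathbf{1}|^\mathbf{i}$ as $z\to 0^+$ (the boundary of the polydisc of convergence); uniform convergence on compact subsets of a neighborhood of $(0,1]$ handles the bounded-$u$ regime for free, and the "absolute and uniform" clause of Definition \ref{def-ts} is precisely what is required to handle the tail $u\to\infty$. Everything else reduces to routine manipulation of Laplace integrals.
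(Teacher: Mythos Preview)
Your argument is correct and follows the same route as the paper: compute $\Del_\be^{\mathbf{i}}f=\L((\lambda_\be(e^{-u})-\mathbf{1})^{\mathbf{i}}\varphi)$, then justify the interchange of sum and integral via an integrable majorant built from the absolute/uniform convergence of $B_\alpha$ on a neighborhood of $(0,1]$ together with the growth estimates \eqref{eq19}. The only cosmetic difference is that the paper phrases the interchange as an application of the Dominated Convergence Theorem to the partial sums, whereas you invoke Fubini--Tonelli; both rest on exactly the same bound for $|B_\alpha|(e^{-u})$, and your treatment is in fact more explicit about this point than the paper's.
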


\begin{proof} We have $\Del_\be^{\mathbf{i}}f(t)=\L((\lambda_\be(e^{-u})-\mathbf{1})^{\mathbf{i}}\varphi(u))$. For $u\in \Re_+$ the series 
$$
\sum_{\mathbf{i}\in \Z_{\geq 0}^N} c_{\mathbf{i}}(\lambda_{\be}(e^{-u})-\mathbf{1})^{\mathbf{i}}\varphi(u)
$$
converges absolutely and converges to $\beta(u)\varphi(u)$ pointwise. Since $\beta(u)=O(1)$, $u\to 0+$ and $\beta(u)=O(u^\nu)$, $u\to +\infty$ we obtain that $\beta(u)\varphi(u)\in \D(\L)$. By the Dominated Convergence Theorem we obtain that the series $\B_\alpha f$ converges absolutely and converges locally uniformly to $\L(\beta(u)\varphi(u))$ proving parts i) and ii).  Part iii) follows along similar lines with the remark that under our hypothesis $f(t)$ is differentiable and $\del f=\L(-u\varphi(u))\in \Imag(\L)$. 
\end{proof}

%%%%%%%%%%%%%%%%%%%%%%%%%%%%%%%%%%%%%%%%%%%%%%%%%%%%%%%%%%%%%%%%%%%%%%%%%%%%%
\subsection{} The relationship between the operators $\A_\alpha$ and $\B_\alpha$ is the following.

\begin{Thm}\label{t1}
Let $f=\L(\varphi)\in \Imag(\L)$ and $\varphi(u)=o(u^{\gamma})$, $u\to 0^+$,  for some $\gamma>-1$. Then
 $$\B_\alpha f(t)=(-1)^\nu\sum_{n=0}^\infty a_{n+1}f^{(\nu)}(t+n)=(-1)^\nu\A_\alpha f^{(\nu)}(t).$$
\end{Thm}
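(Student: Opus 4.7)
The plan is to reduce the identity to a direct calculation in the Laplace image, using only Propositions \ref{p1} and \ref{p2} and the iterated derivative formula already established.

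First I would apply Proposition \ref{p1}(ii) to $f=\L(\varphi)$, which yields
$$
\B_\alpha f(t) \;=\; \L\bigl(\beta(u)\varphi(u)\bigr)(t) \;=\; \L\bigl(u^\nu\alpha(e^{-u})\varphi(u)\bigr)(t),
$$
since $\beta(u) = u^\nu\alpha(e^{-u})$ by definition. This rewrites the left-hand side as a Laplace transform.

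Second, I would iterate part (iii) of Proposition \ref{p1}: each application of $\del$ multiplies the density by $-u$ and preserves membership in $\Imag(\L)$, so
$$
f^{(\nu)}(t) \;=\; \L\bigl((-u)^\nu\varphi(u)\bigr)(t).
$$
In particular $f^{(\nu)}\in\Imag(\L)$, with density $\psi(u) := (-u)^\nu\varphi(u)$.

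Third, I would invoke Proposition \ref{p2} with $f^{(\nu)}$ in place of $f$. The hypothesis of that proposition requires the density to be $o(u^{\gamma'+\nu})$ near $0^+$ for some $\gamma'>-1$, and indeed $\psi(u)=(-u)^\nu\varphi(u)=o(u^{\gamma+\nu})$ by the assumption $\varphi(u)=o(u^\gamma)$ with $\gamma>-1$. This gives
$$
\A_\alpha f^{(\nu)}(t) \;=\; \L\bigl(\alpha(e^{-u})(-u)^\nu\varphi(u)\bigr)(t) \;=\; (-1)^\nu\,\L\bigl(u^\nu\alpha(e^{-u})\varphi(u)\bigr)(t).
$$
Multiplying by $(-1)^\nu$ and comparing with the first display yields $\B_\alpha f(t)=(-1)^\nu\A_\alpha f^{(\nu)}(t)$, which is the claim; unfolding the definition of $\A_\alpha$ (absolute convergence is guaranteed by Proposition \ref{p2}) gives the series form.

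Since all three ingredients are already in place, I do not expect a real obstacle; the one point that needs to be noticed, rather than computed, is that the growth hypothesis $\varphi(u)=o(u^\gamma)$ with $\gamma>-1$ is exactly what Proposition \ref{p2} demands for the density of $f^{(\nu)}$, so the role of the shift by $\nu$ in the exponent there is precisely accounted for by the $\nu$ derivatives taken on $f$.
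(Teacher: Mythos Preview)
Your proof is correct and follows exactly the same approach as the paper: combine Proposition~\ref{p1}(ii) for $\B_\alpha f$, the iterated derivative formula $f^{(\nu)}=\L((-u)^\nu\varphi(u))$ from Proposition~\ref{p1}(iii), and Proposition~\ref{p2} applied to $f^{(\nu)}$ with the growth check $u^\nu\varphi(u)=o(u^{\gamma+\nu})$. The paper's own proof is just the one-line remark ``Straightforward from Proposition~\ref{p1} and Proposition~\ref{p2}'' together with precisely these two observations, so you have simply written out what the paper leaves implicit.
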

\begin{proof}
Straightforward from Proposition \ref{p1} and Proposition \ref{p2} (note that  $f^{(\nu)}(t)= \L((-u)^{\nu}\varphi(u))$ and $u^{\nu}\varphi(u)=o(u^{\gamma+\nu}).$)
\end{proof}

\begin{Rem}
We may consider $f$ as a function of a complex variable in $\Co_{0+}$ and we can clearly define the action of the operators $\ln(\id+\Del)$ and $\B$ on such functions.  Then, the conclusions of Proposition \ref{p1} and Theorem \ref{t1} hold with the real derivative of $f$ replaced with the complex derivative of $f$.
\end{Rem}
\begin{Rem}
With the hypotheses of Theorem \ref{t1}, it is clear that the action of $\B_\alpha$ on $\Imag(\L)$ depends only on $\alpha(z)$ (as opposed to the multi-power series $B_\alpha$).
\end{Rem}
%%%%%%%%%%%%%%%%%%%%%%%%%%%%%%%%%%%%%%%%%%%%%%%%%%%%%%%%%%%%%%%%%%%%%%%%%%%%%
\section{Analytic continuation}\label{AC}

\subsection{} We will show that if a certain one (complex) parameter family of functions lies partially within $\Imag(\L)$ then the entire family is in the domain of $\D(\B_\alpha)$. For this purpose, the following notation will be used henceforth. Let $f=\L(\varphi)\in \Imag(\L)$ and $s\in \Co_{1^+}$. We have $\varphi(u)u^{s-1}\in \D(\L)$ and 
we denote by
$$
f_{-s} (t)=\L(\varphi(u)u^{s-1}/\Gamma(s))(t),
$$
The Laplace-Mellin transform of $\varphi(u)$. In particular, $f_{-1}(t)=f(t)$. Note that $\Gamma(s)f_{-s}(t)$ is the Mellin transform of $e^{-tu}\varphi(u)$.

\subsection{} We first show that $f_{-s}(t)$ and $\B_\alpha f_{-s}(t)$ are indeed holomorphic in $s\in \Co_{1^+}$.
\begin{Prop}\label{p3}
Let $f=\L(\varphi)\in \Imag(\L)$ and $s\in \Co_{1^+}$. Then  
\begin{enumerate}[label={\roman*)}]
\item
$f_{-s}\in \D(\B_\alpha)$ and $\B_\alpha f_{-s}(t)=(\B_\alpha f)_{-s}(t)$;
\item If $\varphi(u)=o(u^\gamma)$, $u\to 0^+$, for some $\gamma>-1$ then
$$
\B_\alpha f_{-s}(t)=(-1)^\nu\sum_{n=0}^\infty a_{n+1} f_{-s}^{(\nu)}(t+n)=(-1)^\nu\A_\alpha f_{-s}^{(\nu)}(t);
$$
\item $\B_\alpha f_{-s}(t)$ is holomorphic in $s\in \Co_{1^+}$.
\end{enumerate}
\end{Prop}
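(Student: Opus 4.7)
All three parts reduce to applying the results already established in Section \ref{BO} (namely Proposition \ref{p1} and Theorem \ref{t1}) with the original Laplace data $\varphi$ replaced by
$$
\psi_s(u) \;:=\; \varphi(u)\,\frac{u^{s-1}}{\Gamma(s)},
$$
so that $f_{-s} = \L(\psi_s)$. The only real task is to show that $\psi_s \in \D(\L)$ for every $s \in \Co_{1^+}$, with estimates locally uniform in $s$, so that one can legitimately quote those earlier results and then differentiate in $s$ under the integral.

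The key estimate is elementary. Fix $t > 0$ and a compact $K \subset \Co_{1^+}$, and choose $s_\pm$ with $1 < s_- \le \Real(s) \le s_+$ for $s \in K$. Since $\Real(s)-1>0$, we have $u^{\Real(s)-1}\le 1$ on $(0,1]$ and, for any $\eps>0$, $u^{\Real(s)-1}\le C_\eps e^{\eps u}$ on $[1,\infty)$, uniformly in $s\in K$. Picking $\eps<t$ and using $\varphi\in\D(\L)$ at the point $t-\eps$ shows $e^{-tu}|\psi_s(u)|$ is integrable, uniformly for $s\in K$. In particular $\psi_s\in\D(\L)$, and the same estimate applied to $\beta(u)\varphi(u)\in\D(\L)$ (which is in $\D(\L)$ by Proposition \ref{p1}) gives the analogous uniform domination for $\beta\varphi\cdot u^{s-1}/\Gamma(s)$.

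Part (i) is then immediate from Proposition \ref{p1} applied to $\psi_s$: it yields $f_{-s}\in\D(\B_\alpha)$ and
$$
\B_\alpha f_{-s} \;=\; \L\!\left(\beta(u)\,\varphi(u)\,\frac{u^{s-1}}{\Gamma(s)}\right);
$$
on the other hand, Proposition \ref{p1} applied to $f$ directly gives $\B_\alpha f=\L(\beta(u)\varphi(u))$, so the right-hand side above is precisely $(\B_\alpha f)_{-s}$. For part (ii), applying Theorem \ref{t1} to $\psi_s$ is legal: the hypothesis $\varphi(u)=o(u^\gamma)$ gives $\psi_s(u)=o(u^{\gamma+\Real(s)-1})$, and $\gamma+\Real(s)-1>\gamma>-1$, so the theorem delivers the stated identity for $\B_\alpha f_{-s}(t)$. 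For part (iii), part (i) writes
$$
\Gamma(s)\cdot\B_\alpha f_{-s}(t) \;=\; \int_0^\infty e^{-tu}\beta(u)\varphi(u)\,u^{s-1}\,du,
$$
the integrand is an entire function of $s$, and by the uniform domination above the integral converges absolutely and locally uniformly on $\Co_{1^+}$; Morera (or differentiation under the integral) gives holomorphy in $s\in\Co_{1^+}$, and dividing by the nowhere-vanishing holomorphic $\Gamma(s)$ preserves it.

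The main (and only mildly delicate) obstacle in this plan is the locally uniform control of the weight $u^{s-1}$: once one has this — via the trivial split $u\le 1$ versus $u\ge 1$ combined with absorbing a small exponential factor into the Laplace kernel — everything collapses to mechanical citations of the earlier results.
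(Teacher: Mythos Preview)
Your proposal is correct and follows essentially the same route as the paper: apply Proposition~\ref{p1} and Theorem~\ref{t1} to $\psi_s(u)=\varphi(u)u^{s-1}/\Gamma(s)$ in place of $\varphi$, and obtain holomorphy in $s$ from locally uniform absolute convergence of the integral representation $\L(\beta(u)\varphi(u)u^{s-1}/\Gamma(s))$. The only differences are cosmetic: you spell out the verification that $\psi_s\in\D(\L)$ (which the paper asserts in \S7.1 without details), and for part~(iii) you invoke Morera/differentiation under the integral on $\int_0^\infty$ directly whereas the paper breaks the integral as $\sum_{n\ge 0}\int_n^{n+1}$ and cites Weierstrass.
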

\begin{proof} The first claim follows directly from Proposition \ref{p1}. We have $\varphi(u)u^{s-1}=o(u^{\gamma+s-1})$, $u\to 0^+$. Theorem \ref{t1} then gives conclusion of part ii).

For $s\in \Co_{1+}$
$$\int_0^\infty e^{-tu}\beta(u)\varphi(u)u^{s-1}/\Gamma(s)du$$
converges locally uniformly in $s\in \Co_{1^+}$. Consequently, the series 
$$
\sum_{n\geq 0} \int_n^{n+1} e^{-tu}\beta(u))\varphi(u)u^{s-1}/\Gamma(s)du
$$
converges locally uniformly in $s\in \Co_{1^+}$ and the Weierstrass theorem allows us to differentiate term by term with respect to $s$. In conclusion,  $\B_\alpha f_{-s}$ is holomorphic in $s\in \Co_{1^+}$.
\end{proof}

\begin{Cor}\label{cor}
The function $f_{-s}(t)$ is holomorphic as a function of $s\in \Co_{1^+}$ and 
$$
\del f_{-s}(t)=-s f_{-s-1}(t).
$$
\end{Cor}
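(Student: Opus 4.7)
The plan is to derive both assertions directly from the integral representation
\[
f_{-s}(t)=\frac{1}{\Gamma(s)}\int_{0}^{\infty}e^{-tu}\varphi(u)u^{s-1}\,du,\qquad s\in\Co_{1^+},
\]
combined with the basic properties of the Laplace transform established in Section \ref{LT} and the differentiation argument already carried out in Proposition \ref{p3}.

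For the holomorphicity in $s$, I would essentially recycle the proof of Proposition \ref{p3}(iii), with $\beta(u)$ replaced by $1$. Concretely, on any compact set $K\subset\Co_{1^+}$ one has uniform bounds on $|u^{s-1}/\Gamma(s)|$ (the factor $u^{\sigma-1}$ for $\sigma=\Real(s)$ bounded away from $1$ is locally integrable near $0$, and the $e^{-tu}$ factor dominates for large $u$, using that $\varphi\in\D(\L)$). Hence the integral converges locally uniformly in $s\in\Co_{1^+}$, and the Weierstrass theorem applied to the partial sums $\sum_{n\ge 0}\int_{n}^{n+1}e^{-tu}\varphi(u)u^{s-1}/\Gamma(s)\,du$ gives that $f_{-s}(t)$ is holomorphic in $s\in\Co_{1^+}$.

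For the derivative formula, I would apply the general Laplace-transform identity \eqref{eq2b} with $k=1$ to the measure $d\mu(u)=\varphi(u)u^{s-1}/\Gamma(s)\,du$, noting that the underlying convergence is absolute on $\Co_{0^+}$ by the definition of $\D(\L)$ together with $\Real(s)>1$. This yields
\[
\del f_{-s}(t)=\frac{1}{\Gamma(s)}\int_{0}^{\infty}(-u)\,e^{-tu}\varphi(u)u^{s-1}\,du=-\frac{1}{\Gamma(s)}\int_{0}^{\infty}e^{-tu}\varphi(u)u^{s}\,du.
\]
Using the functional equation $\Gamma(s+1)=s\,\Gamma(s)$, the right-hand side equals
\[
-\frac{s}{\Gamma(s+1)}\int_{0}^{\infty}e^{-tu}\varphi(u)u^{(s+1)-1}\,du=-s\,f_{-s-1}(t),
\]
which is the claim. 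Note that $s\in\Co_{1^+}$ implies $s+1\in\Co_{1^+}$, so $f_{-s-1}(t)$ is well defined.

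The only technical point is justifying the interchange of $\del/\del t$ with the integral, but this is exactly the content of \eqref{eq2b}, which applies since the integral defining $f_{-s}(t)$ converges absolutely for $t\in\Re_+$ (indeed, for $\Real(t)>0$). There is no substantive obstacle; the corollary is essentially a bookkeeping consequence of Proposition \ref{p3} and the standard properties of the Laplace transform recalled in Section \ref{LT}.
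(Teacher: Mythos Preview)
Your proof is correct and matches the paper's approach. The paper's own proof is just the two-line ``Apply Proposition \ref{p3} for $\alpha(z)=1$. The second claim is clear from the definition of $f_{-s}(t)$,'' which is exactly what you do: specializing $\beta(u)\equiv 1$ in the holomorphicity argument of Proposition \ref{p3}(iii), and then differentiating under the integral (via \eqref{eq2b}) together with $\Gamma(s+1)=s\Gamma(s)$ for the derivative identity.
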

\begin{proof}
Apply Proposition \ref{p3} for $\alpha(z)=1$. The second claim is clear from the definition of $f_{-s}(t)$.
\end{proof}
\begin{Rem}
Recall that, as a function in the image of the Laplace transform, $f_{-s}(t)$ is also holomorphic in $t\in \Co_{0^+}$ and therefore $f_{-s}(t)$ can be considered as a holomorphic function of two complex variables.
\end{Rem}

\begin{Rem}\label{rem-hol}
An important particular case is that of the constant function $\varphi(u)=1$. In this case, $$f_{-s}(t)=t^{-s}=\L(u^{s-1}/\Gamma(s))\quad \text{for} \quad s\in \Co_{1^+}.$$  Proposition \ref{p3}iii) implies that $\B_\alpha(t^{-s})$ is holomorphic in $s\in \Co_{1^+}$. 
\end{Rem}
%%%%%%%%%%%%%%%%%%%%%%%%%%%%%%%%%%%%%%%%%%%%%%%%%%%%%%%%%%%%%%%%%%%%%%%%%%%%%
\subsection{} The following subspace of $\D(\L)$ consists of functions that are dominated by some increasing function in the domain of $\L$ 
\begin{equation}
\D^{\iota}(\L):=\{\varphi~|~ |\varphi|\leq \psi,~\text{for some increasing}~\psi\in \D(\L)\}.
\end{equation}
\begin{Exp}
Let $\varphi\in \D(\L)$ such that $\varphi$ is continuous, bounded in a neighborhood of $0$, and $\varphi(t)=o(t^\gamma)$, $t\to +\infty$, for some $\gamma>0$. Then, $\varphi\in \D(\L)$.
\end{Exp}
For a function $f(s,t): \Co\times \Re_+\to \Co$ we denote by $\Reg(f)$ the set of points $s\in \Co$ which are regular points for $f(\cdot, t)$ as a complex single-valued analytic function and any $t\in \Re_+$. We denote by $\Sing(f)$ the complement of $\Reg(f)$ inside $\Co$. We say that $f(s,t)$ has isolated singularities if $\Sing(f)$ is a discrete set.

The following space of functions is a main ingredient in Theorem \ref{t2m}.
\begin{Def}\label{def-H}
Let $\H$ denote the space of functions $f(s,t): \Co\times \Re_+\to \Co$ satisfying the following properties
\begin{itemize}
\item $f(s,t)$ has isolated singularities; 
\item for $s\in\Reg(f)$, $f(s,t)$ is differentiable in $t\in \Re_+$ ;
\item $\Reg(f)\subseteq \Reg(\del f)$;
\item $f(t):=f(-1,t)=\L(\varphi)$ for some $\varphi\in \D^{\iota}(\L)$;
\item $f(-s,t)=f_{-s}(t)$ for $(s,t)\in \Co_{1}\times \Re_+$.
\end{itemize}
\end{Def}
\begin{convention} When we discuss functions $f(s,t)\in \H$ we assume that $f(t)$ is the corresponding function in the context of  Definition \ref{def-H}. 
\end{convention}

\begin{Rem} Let $f(s,t)\in \H$.
An immediate consequence of the definition is that $\del f(s,t)$ has isolated singularities and 
$$
\{s\in \Co~|~\Real(s)\leq -1\}\subset  \Reg(f).
$$
\end{Rem}

\begin{Rem}\label{rem-deriv} Let $f(s,t)\in \H$. Then $\Reg(\del f)-1\subseteq \Reg(f)$. Indeed, from Corollary \ref{cor} we obtain
$$
\del f(s,t)=s f(s-1,t)\quad \text{for all}\ (s,t)\in \Reg(f)\times\Re_+,~s-1\in \Reg(f).
$$
If $s_0\in \Reg(f)$ then we obtain that $(\del f(s,t))/s$ is regular and equals $f(s-1,t)$ in a neighborhood of $s_0$. Therefore $s_0-1$ is a removable singularity for $f(s,t)$, unless $s_0=0$. Note that the possible exception, $s_0=0$, is ruled out by the fact that $-1\in \Reg(f)$. We obtain that
\begin{equation}
 \Reg(f)\subseteq \Reg(\del f)\subseteq  \Reg(f)+1.
\end{equation}
\end{Rem}

\begin{Rem}
Let $f(s,t)\in \H$. Then, $\del f(s,t)\in \H$. In particular, for $s\in \Reg(f)$, $f(s,\cdot)\in \mathscr{C}^\infty(\Re_+)$ and 
\begin{equation}
\Reg(f)\subseteq \Reg(\del^n f) \subseteq \Reg(f)+n, \quad n\geq 0.
\end{equation}
This implies that $\Sing(f)$ is stable under the action of $\Z_{\geq 0}$ by translation.
\end{Rem}

%%%%%%%%%%%%%%%%%%%%%%%%%%%%%%%%%%%%%%%%%%%%%%%%%%%%%%%%%%%%%%%%%%%%%%%%%%%%%
\subsection{}
Let $g(t)\in\mathscr{C}^\infty(\Re_+)$, $\mathbf{h}\in \Re^N_{>0}$, and $\mathbf{i}\in \Z^N_{\geq 0}$. Then,
\begin{equation}\label{eq18}
\Del_{\mathbf{h}}^{\mathbf{i}} g(t)=\int_{[\mathbf{0},\mathbf{h}]^{\mathbf{i}}} \E_{\mathbf{u}} \left(\del^{|\mathbf{i}|}g(t) \right)\mathbf{du}.
\end{equation}
\begin{Prop}\label{peq9}
For $f(s,t)\in \H$ and $-s\in \Reg(f)$, we have
\begin{equation}\label{eq9}
(-\Del_{\mathbf{h}})^{\mathbf{i}} f(-s,t)=\frac{\Gamma(s+|\mathbf{i}|)}{\Gamma(s)} \int_{[\mathbf{0},\mathbf{h}]^{\mathbf{i}}} \E_{\mathbf{u}} \left(f({-s-|\mathbf{i}|,t)} \right)\mathbf{du}. 
\end{equation}
In particular, for $n\in\Z_{\geq 0}\cap \Reg(f)$, $f(n,t)$ is polynomial in $t$ of degree at most $n$.
\end{Prop}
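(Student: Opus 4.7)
The plan is to first establish a closed-form expression for the iterated derivative $\del^{|\mathbf{i}|} f(-s,t)$, and then plug that into the representation formula \eqref{eq18} for the iterated difference operator.

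\textbf{Step 1: The derivative recursion.} Starting from Corollary \ref{cor}, for $s\in\Co_{1^+}$ we have
$$
\del f(-s,t)=\del f_{-s}(t)=-s\, f_{-s-1}(t)=-s\, f(-s-1,t).
$$
By Remark \ref{rem-deriv}, this identity extends to all $-s\in \Reg(f)$, namely $\del f(s,t)=s\, f(s-1,t)$, and in particular the regular locus is closed under subtraction by $1$ (since $\Reg(f)\subseteq \Reg(\del f)\subseteq \Reg(f)+1$). Iterating the recursion $k$ times yields, for $-s\in\Reg(f)$ and any $k\geq 0$,
$$
\del^{k} f(-s,t)=(-1)^{k}\,s(s+1)\cdots(s+k-1)\,f(-s-k,t)=(-1)^{k}\,\frac{\Gamma(s+k)}{\Gamma(s)}\,f(-s-k,t).
$$

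\textbf{Step 2: Applying the difference/integral identity.} Since $-s\in\Reg(f)$, we have $f(-s,\cdot)\in \mathscr{C}^\infty(\Re_+)$, so equation \eqref{eq18} applies to $g(t)=f(-s,t)$:
$$
\Del_{\mathbf{h}}^{\mathbf{i}} f(-s,t)=\int_{[\mathbf{0},\mathbf{h}]^{\mathbf{i}}}\E_{\mathbf{u}}\!\left(\del^{|\mathbf{i}|}f(-s,t)\right)\mathbf{du}.
$$
Substituting the formula from Step 1 with $k=|\mathbf{i}|$ and pulling the constant $(-1)^{|\mathbf{i}|}\Gamma(s+|\mathbf{i}|)/\Gamma(s)$ outside the integral, and then multiplying both sides by $(-1)^{|\mathbf{i}|}$, gives the claimed identity
$$
(-\Del_{\mathbf{h}})^{\mathbf{i}} f(-s,t)=\frac{\Gamma(s+|\mathbf{i}|)}{\Gamma(s)}\int_{[\mathbf{0},\mathbf{h}]^{\mathbf{i}}}\E_{\mathbf{u}}\!\left(f(-s-|\mathbf{i}|,t)\right)\mathbf{du}.
$$

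\textbf{Step 3: The polynomial consequence.} For $n\in\Z_{\geq 0}\cap\Reg(f)$, apply the recursion from Step 1 with $s=-n$ and $k=n+1$: the product $s(s+1)\cdots(s+k-1)=(-n)(-n+1)\cdots(0)$ contains the factor $0$, so $\del^{n+1} f(n,t)=0$ identically in $t$. (Equivalently, $\Gamma(s+|\mathbf{i}|)/\Gamma(s)=\Gamma(1)/\Gamma(-n)=0$ in the main formula.) Therefore $f(n,t)$ is a polynomial in $t$ of degree at most $n$.

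\textbf{Anticipated obstacle.} The delicate point is Step 1: the derivative identity $\del f(-s,t)=-s\,f(-s-1,t)$ must be promoted from the region $\Co_{1^+}$, where it follows by differentiating under the Laplace integral, to the whole regular locus $\Reg(f)$. This requires one to know that $\Reg(f)$ is stable under subtraction by $1$, so that the iterated recursion never trips over a singularity — but this is exactly what the chain of inclusions in Remark \ref{rem-deriv} supplies, so the extension is automatic by analytic continuation in $s$.
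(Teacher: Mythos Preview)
Your proof is correct and follows essentially the same approach as the paper: both combine the integral representation \eqref{eq18} of the iterated difference with the derivative recursion $\del f(s,t)=s\,f(s-1,t)$ from Remark \ref{rem-deriv}. Your Step 3 is in fact slightly cleaner than the paper's version: the paper deduces the polynomial conclusion from the vanishing of $(-\Del)^{n+1}f(n,t)$ (via the right-hand side of \eqref{eq9}), which tacitly still requires passing back through \eqref{eq18} to conclude $\del^{n+1}f(n,t)=0$; you go straight to the vanishing of $\del^{n+1}f(n,t)$ from the factor $(-n)(-n+1)\cdots 0$ in the derivative recursion.
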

\begin{proof} 
The first claim follows directly from \eqref{eq18} and Remark \ref{rem-deriv}.  For $(-\Del)^{n+1}$ and $s=-n$, remark that  the right-hand side of \eqref{eq9} is zero. This implies that $f(n,t)$ is polynomial in $t$ of degree at most $n$.
\end{proof}
%%%%%%%%%%%%%%%%%%%%%%%%%%%%%%%%%%%%%%%%%%%%%%%%%%%%%%%%%%%%%%%%%%%%%%%%%%%%%
\subsection{}
Recall the constant $C_{s,s_0}$ is defined by \eqref{eq-C} for $s_0>0$, $s\in \Co_{s_0^{-}}\cap\Co_{0^+}$. For $s$ in $\Co$ define
$$
n(s)=\min\{n\geq 0~|~ s+n\in \Co_{1^+}\},
$$
and 
\begin{equation}
\widetilde{C}_{s,s_0}:=C_{s+n(s),s_0+n(s)}.
\end{equation}
Note that, by Remark \ref{rem-bd1},  $\widetilde{C}_{s,s_0}\geq C_{s+n,s_0+n}$ for any $n\geq 0$ such that $s+n\in \Co_{1^+}.$
The following basic estimate was inspired by \cite{HasEin}*{pg. 463-464}. 
\begin{Lm}\label{lemma-estimate}
Let $f(s,t)\in \H$, $f=\L(\varphi)$ for $\varphi\in \D^\iota(\L)$, $s_0>1$, and $s\in \Co_{s_0^-}$. Fix $\psi\in \D(\L)$, increasing, such that $\psi\geq |\varphi|$ and denote $g=\L(\psi)$.  Then, for $-s\in \Reg(f)$ and $\mathbf{i}$ such that $s+|\mathbf{i}|\in \Co_{1^+}$, we have
$$
|(-\boldsymbol{\Del}_{\mathbf{h}})^{\mathbf{i}}f(-s,t)|\leq \widetilde{C}_{s,s_0}|(-\Del_{\mathbf{h}})^{\mathbf{i}}g_{-s_0}(t)| \left|\frac{\Gamma(s_0)}{\Gamma(s)}\right| \left|\frac{\Gamma(s+|\mathbf{i}|)}{\Gamma(s_0+|\mathbf{i}|)}\right||\mathbf{i}|^{\Real(s_0-s)}\left(\frac{t}{|\mathbf{i}|}+|\mathbf{h}|_2\right)^{\Real(s_0-s)}.
$$
\end{Lm}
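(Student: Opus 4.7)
The plan is to combine the integral representation from Proposition~\ref{peq9} with the bound in Lemma~\ref{lm-bd1} and a calculus identity for $g_{-s_0}$. Since $s+|\mathbf{i}|\in\Co_{1^+}$, the fifth defining property of $\H$ identifies $f(-s-|\mathbf{i}|,\cdot)$ with $f_{-(s+|\mathbf{i}|)}$, so Proposition~\ref{peq9} gives
\[
(-\Del_\mathbf{h})^\mathbf{i} f(-s, t) = \frac{\Gamma(s+|\mathbf{i}|)}{\Gamma(s)}\int_{[\mathbf{0}, \mathbf{h}]^\mathbf{i}} f_{-(s+|\mathbf{i}|)}(t+|\mathbf{u}|)\,d\mathbf{u}.
\]

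Next I would pass absolute values inside the Laplace--Mellin integral defining $f_{-(s+|\mathbf{i}|)}$ and use $|\varphi|\leq\psi$ (with $\psi$ real, non-negative) to obtain
\[
|f_{-(s+|\mathbf{i}|)}(t+|\mathbf{u}|)| \leq \frac{\Gamma(\Real(s)+|\mathbf{i}|)}{|\Gamma(s+|\mathbf{i}|)|}\, g_{-(\Real(s)+|\mathbf{i}|)}(t+|\mathbf{u}|).
\]
Applying Lemma~\ref{lm-bd1} with $s$ replaced by the real number $\Real(s)+|\mathbf{i}|$ and $s_0$ replaced by $s_0+|\mathbf{i}|$ (for which the constant in~\eqref{eq-C} equals~$1$) converts this to
\[
g_{-(\Real(s)+|\mathbf{i}|)}(t+|\mathbf{u}|)\leq (t+|\mathbf{u}|)^{s_0-\Real(s)}\, g_{-(s_0+|\mathbf{i}|)}(t+|\mathbf{u}|).
\]
The coordinatewise bound $u_{k,j}\leq h_k$ together with Cauchy--Schwarz gives $|\mathbf{u}|\leq\langle\mathbf{i},\mathbf{h}\rangle\leq|\mathbf{i}|_2|\mathbf{h}|_2\leq|\mathbf{i}|\,|\mathbf{h}|_2$ on $[\mathbf{0},\mathbf{h}]^\mathbf{i}$, so the factor $(t+|\mathbf{u}|)^{s_0-\Real(s)}$ can be pulled outside the integral and majorized by $|\mathbf{i}|^{s_0-\Real(s)}(t/|\mathbf{i}|+|\mathbf{h}|_2)^{s_0-\Real(s)}$.

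The remaining integral of $g_{-(s_0+|\mathbf{i}|)}$ is evaluated by iterating $\del g_{-s_0}=-s_0 g_{-(s_0+1)}$ (direct from the Laplace definition) to $\del^{|\mathbf{i}|} g_{-s_0}=(-1)^{|\mathbf{i}|}(\Gamma(s_0+|\mathbf{i}|)/\Gamma(s_0))g_{-(s_0+|\mathbf{i}|)}$ and applying the elementary identity~\eqref{eq18}, which yields
\[
\int_{[\mathbf{0},\mathbf{h}]^\mathbf{i}} g_{-(s_0+|\mathbf{i}|)}(t+|\mathbf{u}|)\,d\mathbf{u} = \frac{\Gamma(s_0)}{\Gamma(s_0+|\mathbf{i}|)}\,(-\Del_\mathbf{h})^\mathbf{i} g_{-s_0}(t),
\]
a non-negative quantity since $g_{-(s_0+|\mathbf{i}|)}\geq 0$. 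Assembling the three steps, the $|\Gamma(s+|\mathbf{i}|)|$ produced by the integrand bound cancels the $\Gamma(s+|\mathbf{i}|)$ in the Proposition~\ref{peq9} prefactor, and one arrives at
\[
|(-\Del_\mathbf{h})^\mathbf{i} f(-s,t)| \leq \frac{\Gamma(\Real(s)+|\mathbf{i}|)\,\Gamma(s_0)}{|\Gamma(s)|\,\Gamma(s_0+|\mathbf{i}|)}\, |\mathbf{i}|^{s_0-\Real(s)}\left(\frac{t}{|\mathbf{i}|}+|\mathbf{h}|_2\right)^{s_0-\Real(s)} |(-\Del_\mathbf{h})^\mathbf{i} g_{-s_0}(t)|.
\]

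The main technical obstacle is the final bookkeeping step of absorbing $\Gamma(\Real(s)+|\mathbf{i}|)$ into $\widetilde{C}_{s,s_0}|\Gamma(s+|\mathbf{i}|)|$. I would apply the Gautschi-type bound
\[
\left(\frac{\Gamma(x)}{|\Gamma(x+iy)|}\right)^{2} \leq \left(1+\frac{y^2}{x^2}\right)\frac{\sinh(\pi y)}{\pi y},
\]
established inside the proof of Lemma~\ref{lm-bd1}, with $x=\Real(s)+|\mathbf{i}|$ and $y=\Imag(s)$. Since $s+|\mathbf{i}|\in\Co_{1^+}$ forces $|\mathbf{i}|\geq n(s)$, one has $1+\Imag(s)^2/(\Real(s)+|\mathbf{i}|)^2\leq 1+\Imag(s)^2/\Real(s+n(s))^2$. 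Expanding $\widetilde{C}_{s,s_0}^2$ via~\eqref{eq-C}, the ratio of $\widetilde{C}_{s,s_0}^2$ to the Gautschi right-hand side is therefore bounded below by $(\sinh(\pi\Imag(s))/(\pi\Imag(s)))(1+\Imag(s)^2/\Real(s_0-s)^2)\geq 1$, which gives $\Gamma(\Real(s)+|\mathbf{i}|)\leq\widetilde{C}_{s,s_0}|\Gamma(s+|\mathbf{i}|)|$. Using $\Gamma(s_0)=|\Gamma(s_0)|$ and $\Gamma(s_0+|\mathbf{i}|)=|\Gamma(s_0+|\mathbf{i}|)|$ to rearrange then produces the stated inequality.
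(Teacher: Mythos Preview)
Your proof is correct and follows essentially the same route as the paper: both start from the integral representation \eqref{eq9}, dominate $|f_{-s-|\mathbf{i}|}|$ via $|\varphi|\leq\psi$ and Lemma~\ref{lm-bd1}, pull the factor $(t+|\mathbf{u}|)^{\Real(s_0-s)}$ out of the box integral using $|\mathbf{u}|\leq\langle\mathbf{h},\mathbf{i}\rangle\leq|\mathbf{i}|\,|\mathbf{h}|_2$, and then recognize the remaining integral as $(-\Del_{\mathbf{h}})^{\mathbf{i}}g_{-s_0}(t)$ via \eqref{eq18}. The only cosmetic difference is that the paper applies Lemma~\ref{lm-bd1} directly with the complex parameter $s+|\mathbf{i}|$ (producing $C_{s+|\mathbf{i}|,s_0+|\mathbf{i}|}$, then bounded by $\widetilde{C}_{s,s_0}$ via Remark~\ref{rem-bd1}), whereas you first pass to the real exponent $\Real(s)+|\mathbf{i}|$, apply Lemma~\ref{lm-bd1} with constant~$1$, and handle the leftover ratio $\Gamma(\Real(s)+|\mathbf{i}|)/|\Gamma(s+|\mathbf{i}|)|$ separately using the Gautschi-type bound from that lemma's proof---which amounts to the same estimate.
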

\begin{proof}  Since  $\Real(s_0-s)>0$ we have that  $$\E_\mathbf{u} t^{\Real(s_0-s)}\leq (t+\<\be,\mathbf{i}\>)^{\Real(s_0-s)}\quad \text{for} \quad \mathbf{u}\in [\mathbf{0},\be]^{\mathbf{i}}.$$ From  \eqref{eq9} we obtain 
$$
|(-\Del_{\mathbf{h}})^{\mathbf{i}}f(-s,t)|\leq \left|\frac{\Gamma(s+|\mathbf{i}|)}{\Gamma(s)}\right| \int_{[\mathbf{0},\mathbf{h}]^{\mathbf{i}}} \E_{\mathbf{u}} \left| f(-s-|\mathbf{i}|,t) \right|\mathbf{du}.
$$
Our hypothesis readily implies that $f(-s-|\mathbf{i}|,t)=f_{-s-|\mathbf{i}|}(t)$ and $|f_{-s-|\mathbf{i}|}(t)|\leq g_{-s-|\mathbf{i}|}(t)$. Furthermore, Lemma \ref{lm-bd1} applied to $\phi(u)$, $s+|\mathbf{i}|$, and $s_0+|\mathbf{i}|$ gives us
$$
g_{-s-|\mathbf{i}|}(t)\leq C_{s+|\mathbf{i}|,s_0+|\mathbf{i}|} t^{\Real(s_0-s)} g_{-s_0-|\mathbf{i}|}(t).
$$
Putting everything together we obtain
$$
|(-\Del_{\mathbf{h}})^{\mathbf{i}}f(-s,t)|\leq C_{s+|\mathbf{i}|,s_0+|\mathbf{i}|} |(-\Del_{\mathbf{h}})^{\mathbf{i}}g_{-s_0}(t)| \left|\frac{\Gamma(s_0)}{\Gamma(s)}\right| \left|\frac{\Gamma(s+|\mathbf{i}|)}{\Gamma(s_0+|\mathbf{i}|)}\right||\mathbf{i}|^{\Real(s_0-s)}\left(\frac{t+\<\mathbf{h},\mathbf{i}\>}{|\mathbf{i}|}\right)^{\Real(s_0-s)},
$$
which implies our claim.
\end{proof}

\begin{Prop}\label{prop-estimate}
Let $\B=\sum_{\mathbf{i}\in \Z^N_{\geq 0}}c_\mathbf{i}\Del_{h\be}^\mathbf{i}$, and $s_0>1$, $s\in\Co_{s_0^-}$. 
Let $f(s,t)\in \H$, $f=\L(\varphi)$ for $\varphi\in \D^\iota(\L)$. Fix $\psi\in \D(\L)$, increasing, such that $\psi\geq |\varphi|$ and denote $g=\L(\psi)$. 

If $g_{-s_0}(t)\in \D(\B)$,  then $f(-s,t)\in \D(\B)$ for $-s\in \Reg(f)$. Furthermore, for $s\in \Co_{s_0^-}\cap(-\Reg(f))$, $\B f(-s,t)$ is holomorphic .
\end{Prop}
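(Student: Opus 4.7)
The plan is to dominate the series defining $\B f(-s,t)$ termwise via Lemma \ref{lemma-estimate}, show the dominating series is absolutely and locally uniformly summable (using the hypothesis $g_{-s_0}\in\D(\B)$), and then invoke the Weierstrass convergence theorem for holomorphicity in $s$. Throughout, fix compact sets $K_s\subset \Co_{s_0^-}\cap(-\Reg(f))$ and $K_t\subset \Re_+$.

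First, only finitely many multi-indices $\mathbf{i}\in\Z_{\geq 0}^N$ can fail the condition $s+|\mathbf{i}|\in\Co_{1^+}$ for some $s\in K_s$, since $\Real(s)$ is bounded below on the compact $K_s$. For these exceptional indices, $\Del_{h\be}^\mathbf{i} f(-s,t)$ is a finite linear combination of values $f(-s,t+h\<\be,\mathbf{j}\>)$, each of which is holomorphic in $s$ on $-\Reg(f)$ by the definition of $\H$ (and jointly continuous in $(s,t)$), so this finite contribution poses no issue for either convergence or holomorphicity. For all remaining $\mathbf{i}$, Lemma \ref{lemma-estimate} with $\mathbf{h}=h\be$ produces
\[
|c_\mathbf{i}\,\Del_{h\be}^\mathbf{i}f(-s,t)| \,\leq\, P_\mathbf{i}(s,t)\,|c_\mathbf{i}\,\Del_{h\be}^\mathbf{i}g_{-s_0}(t)|,
\]
where
\[
P_\mathbf{i}(s,t) = \widetilde{C}_{s,s_0}\left|\frac{\Gamma(s_0)}{\Gamma(s)}\right|\left|\frac{\Gamma(s+|\mathbf{i}|)}{\Gamma(s_0+|\mathbf{i}|)}\right||\mathbf{i}|^{\Real(s_0-s)}\left(\frac{t}{|\mathbf{i}|}+h|\be|_2\right)^{\Real(s_0-s)}.
\]

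The critical step is to bound $P_\mathbf{i}(s,t)$ uniformly in $\mathbf{i}$ on $K_s\times K_t$. Stirling's asymptotic gives $|\Gamma(s+|\mathbf{i}|)/\Gamma(s_0+|\mathbf{i}|)|\sim|\mathbf{i}|^{\Real(s-s_0)}$ as $|\mathbf{i}|\to\infty$, uniformly for $s$ in the compact set $K_s\subset\Co_{s_0^-}$, so the product $|\Gamma(s+|\mathbf{i}|)/\Gamma(s_0+|\mathbf{i}|)|\cdot|\mathbf{i}|^{\Real(s_0-s)}$ is bounded uniformly in $\mathbf{i}$ and $s$. Since $\Real(s_0-s)$ is bounded on $K_s$ and $t$ is bounded on $K_t$, for $|\mathbf{i}|\geq 1$ the factor $(t/|\mathbf{i}|+h|\be|_2)^{\Real(s_0-s)}$ is uniformly bounded. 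Finally, $\widetilde{C}_{s,s_0}$ and $|\Gamma(s_0)/\Gamma(s)|$ are continuous, hence bounded, on $K_s$. Combining these gives $P_\mathbf{i}(s,t)\leq M=M(K_s,K_t)$ for all sufficiently large $|\mathbf{i}|$. Since $g_{-s_0}\in\D(\B)$, the series $\sum_\mathbf{i}|c_\mathbf{i}\,\Del_{h\be}^\mathbf{i}g_{-s_0}(t)|$ converges uniformly in $t$ on $K_t$; dominated by $M$, the series $\sum_\mathbf{i}c_\mathbf{i}\,\Del_{h\be}^\mathbf{i}f(-s,t)$ therefore converges absolutely and uniformly on $K_s\times K_t$.

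Specializing to $K_s=\{s\}$ for $-s\in\Reg(f)$ establishes $f(-s,t)\in\D(\B)$. For the holomorphicity claim, each partial sum is holomorphic in $s$ on $K_s$ (the discrete-difference operators only shift the $t$-argument, and $f(-s,\cdot)$ is holomorphic in $s$ on $-\Reg(f)$ by definition of $\H$), and the convergence is uniform on $K_s$, so Weierstrass' theorem implies that $\B f(-s,t)$ is holomorphic in $s$ throughout $\Co_{s_0^-}\cap(-\Reg(f))$. The main technical obstacle is the Stirling-based uniform control of the prefactor $P_\mathbf{i}$ as $|\mathbf{i}|\to\infty$; once that is cleanly established, the remainder of the argument is a routine application of dominated convergence and Weierstrass.
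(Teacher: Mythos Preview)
Your proof is correct and follows essentially the same route as the paper: dominate the tail terms via Lemma \ref{lemma-estimate}, control the Gamma-ratio prefactor asymptotically (you invoke Stirling; the paper uses the equivalent limit $|\Gamma(s+|\mathbf{i}|)/\Gamma(s_0+|\mathbf{i}|)|\,|\mathbf{i}|^{\Real(s_0-s)}\to 1$), compare with the absolutely convergent series for $g_{-s_0}$, and conclude with Weierstrass. Your explicit handling of the finitely many exceptional $\mathbf{i}$ and your use of fixed compact sets $K_s\times K_t$ make the uniformity cleaner than in the paper's more terse presentation; one small caveat is that $\widetilde{C}_{s,s_0}$ need not be continuous in $s$ (since $n(s)$ is a step function), but it is still bounded on compacta because $n(s)$ takes only finitely many values there.
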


\begin{proof}
We know that  the series 
$$
\sum_{\mathbf{i}\in \Z^N_{\geq 0}}c_\mathbf{i}\Del_{h\be}^\mathbf{i}g_{-s_0}(t)
$$
is absolutely convergent. 

For $|\mathbf{i}|>-\Real(s)+1$, $\Gamma(s+|\mathbf{i}|)$ is finite and, as $|\mathbf{i}|$ approaches infinity, $\left|\frac{\Gamma(s+|\mathbf{i}|)}{\Gamma(s_0+|\mathbf{i}|)}\right||\mathbf{i}|^{\Real(s_0-s)}$ converges to $1$. Therefore, for a fixed constant $K>1$, there exist $k=k_{s,t}>0$ which depends locally uniformly on $s$ and $t$ such that 
$$
\left|\frac{\Gamma(s+|\mathbf{i}|)}{\Gamma(s_0+|\mathbf{i}|)}\right||\mathbf{i}|^{\Real(s_0-s)}\left(\frac{t}{|\mathbf{i}|}+|\be|_2\right)^{\Real(s_0-s)}<K(1+|\be|_2)^{\Real(s_0-s)}, \quad \text{for}\quad |\mathbf{i}|>k.
$$
Lemma \ref{lemma-estimate} now implies that  
$$
|\Del_{h\be}^\mathbf{i}f({-s},t)|<K\widetilde{C}_{s,s_0}(1+|\be|_2)^{\Real(s_0-s)}\left|\frac{\Gamma(s_0)}{\Gamma(s)}\right||\Del_{h\be}^\mathbf{i}g_{-s_0}(t)|, \quad \text{for}\quad |\mathbf{i}|>k.
$$
Therefore, the series 
$$
\sum_{\mathbf{i}\in \Z^N_{\geq 0}}c_\mathbf{i}\Del_{h\be}^\mathbf{i}f({-s},t)
$$
is absolutely convergent. The convergence is locally uniform in $t\in \Re_+$ and  $s\in \Co_{s_0-}$ and the series of term-by-term derivatives with respect to $s$ is of the same form. The Weierstrass theorem allows us to differentiate term-by-term. In conclusion, $\B f({-s},t)$ is holomorphic in the given domain.
\end{proof}

%%%%%%%%%%%%%%%%%%%%%%%%%%%%%%%%%%%%%%%%%%%%%%%%%%%%%%%%%%%%%%%%%%%%%%%%%%%%%
\subsection{}

Our first main result is the following. See also  \cite{EveMer}*{Theorem 3.3.4} (included below as Theorem \ref{thm: eve}) for a related result in the theory of Dirichlet series.

\begin{Thm}\label{t2m}
The following hold
\begin{enumerate}[label={\roman*)}]
\item $\H\subset \D(\B_\alpha)$;
\item For $f(s,t)\in \H$ we have $\B_\alpha f(s,t)\in \H$ and  $\Reg(f)= \Reg(\B_\alpha f)$.
\end{enumerate}
In particular, $\B_\alpha$ can be considered as a linear operator
$
\B_\alpha: \H \to \H.
$
\end{Thm}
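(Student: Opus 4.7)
The plan is to invoke Proposition \ref{prop-estimate} as the main technical engine, then verify the five conditions of Definition \ref{def-H} for $\B_\alpha f$. Given $f \in \H$, by definition $f(-1,t) = \L(\varphi)$ with $\varphi \in \D^\iota(\L)$; fix an increasing $\psi \in \D(\L)$ dominating $|\varphi|$ and set $g = \L(\psi)$. For any $s_0 > 1$ the integrand $\psi(u) u^{s_0-1}/\Gamma(s_0)$ lies in $\D(\L)$, so $g_{-s_0} \in \Imag(\L)$, and Proposition \ref{p1} then gives $g_{-s_0} \in \D(\B_\alpha)$. Proposition \ref{prop-estimate} (with $h=1$) now yields $f(-s,t) \in \D(\B_\alpha)$ and $\B_\alpha f(-s,t)$ holomorphic in $s$ whenever $-s \in \Reg(f)$ and $s \in \Co_{s_0^-}$. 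Since $s_0 > 1$ is arbitrary, letting $s_0 \to \infty$ exhausts $-\Reg(f)$, proving (i) and the inclusion $\Reg(f) \subseteq \Reg(\B_\alpha f)$ (so $\Sing(\B_\alpha f) \subseteq \Sing(f)$ is discrete).

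To show $\B_\alpha f \in \H$, I would check the remaining four conditions. Isolated singularities and $t$-smoothness on the regular locus follow by Weierstrass from the local-uniform convergence together with the smoothness of each term $\Del_\be^{\mathbf{i}}f(s,t)$. Proposition \ref{p1} gives $\B_\alpha f(-1,t) = \L(\beta \varphi)$; to see $\beta\varphi \in \D^\iota(\L)$ one uses the bound $|\beta(u)\varphi(u)| \leq C(1+u^\nu)\psi(u)$ from \eqref{eq19}, and notes that $(1+u^\nu)\psi(u)$ is increasing in $u$ and lies in $\D(\L)$, since $u^\nu\psi(u) \in \D(\L)$ is exhibited by differentiating $\L(\psi)$ a total of $\nu$ times with respect to its Laplace variable. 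The Laplace-Mellin compatibility $\B_\alpha f(-s,t) = (\B_\alpha f)_{-s}(t)$ on $\Co_{1^+} \times \Re_+$ is exactly Proposition \ref{p3}(i), extending to $\Co_1 \times \Re_+$ by continuity. Finally, $\Reg(\B_\alpha f) \subseteq \Reg(\del \B_\alpha f)$ follows from the termwise commutation $\del \B_\alpha = \B_\alpha \del$, legitimate by the absolute local-uniform convergence; noting $\del f \in \H$ (Remark \ref{rem-deriv}) and reapplying the first half of the proof to $\del f$ gives holomorphicity of $\B_\alpha \del f = \del \B_\alpha f$ on $\Reg(\del f) \supseteq \Reg(f) \supseteq \Reg(\B_\alpha f)$.

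The main obstacle is the reverse inclusion $\Reg(\B_\alpha f) \subseteq \Reg(f)$, which is not delivered by Proposition \ref{prop-estimate}. I would approach it by formally inverting $\B_\alpha$ on the Laplace side: since $\beta(u) = u^\nu\alpha(e^{-u})$ is analytic on a neighborhood of $[0,\infty)$ with $\beta(0) = (-1)^\nu k_\nu \neq 0$, the reciprocal $1/\beta$ is meromorphic with only isolated singularities on $(0,\infty)$, and dividing inside the integral $\L(\beta\varphi\, u^{s-1}/\Gamma(s))(t)$ recovers the analytic structure of $\L(\varphi\, u^{s-1}/\Gamma(s))(t) = f(-s,t)$. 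A more structured route is to split $\alpha = \alpha_p + \alpha_h$ via Remark \ref{rem-T}: the principal part gives $\B_{\alpha_p}$ as a finite polynomial in $\T$ and $\del$ whose continuation effect is transparent from Corollary \ref{cor}, while $\B_{\alpha_h}$ contributes $(-1)^\nu \del^\nu \T_{\alpha_h}$, which shifts $s$ by $\nu$ via Corollary \ref{cor} and does not cancel the underlying singular structure of $f$. Combining these, one concludes that any removable singularity of $\B_\alpha f(-s,t)$ at $-s_0$ forces $f(-s,t)$ to be regular at $-s_0$, yielding $\Reg(f) = \Reg(\B_\alpha f)$ and completing (ii).
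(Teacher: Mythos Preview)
Your first two paragraphs track the paper's proof closely: the paper fixes a single $s_0>2$ (so that $\Co_{s_0^-}\cup\Co_{1^+}=\Co$) rather than sending $s_0\to\infty$, but otherwise invokes Proposition~\ref{prop-estimate} and Propositions~\ref{p1},~\ref{p3} in the same way, and verifies the conditions of Definition~\ref{def-H} for $\B_\alpha f$ via the commutation $\del\B_\alpha=\B_\alpha\del$ just as you do.

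You are right that the reverse inclusion $\Reg(\B_\alpha f)\subseteq\Reg(f)$ is not delivered by Proposition~\ref{prop-estimate}; in fact the paper's own proof does not argue it explicitly. However, both of your proposed repairs fail. Dividing by $\beta$ on the Laplace side says nothing about the analytic continuation outside $\Co_{1^+}$, since there $\B_\alpha f(-s,t)$ is no longer given by the integral $\L(\beta\varphi\,u^{s-1}/\Gamma(s))$. The decomposition $\alpha=\alpha_p+\alpha_h$ from Remark~\ref{rem-T} splits the \emph{Todd} operator $\T_\alpha$, not $\B_\alpha$; Proposition~\ref{rem-todd} equates them only on polynomials, and since $\B_\alpha$ is built from $B_\alpha(z)=(-\ln z)^\nu\alpha(z)$ with $\nu=\nu(\alpha)$ while $\nu(\alpha_h)=0$, one does not have $\B_\alpha=\B_{\alpha_p}+\B_{\alpha_h}$ on $\H$.

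The reading implicit in the paper is simpler: $\B_\alpha f(s,t)$ is \emph{defined} as the series $\sum_{\mathbf i}c_{\mathbf i}\Del_\be^{\mathbf i}f(s,t)$, and each term $\Del_\be^{\mathbf i}f(s,t)$ is a finite combination of values $f(s,t+k)$, hence only makes sense for $s\in\Reg(f)$. Thus $\B_\alpha f$ has natural domain $\Reg(f)\times\Re_+$, giving $\Reg(\B_\alpha f)\subseteq\Reg(f)$ tautologically. If you intend the stronger assertion that no point of $\Sing(f)$ becomes a \emph{removable} singularity of $\B_\alpha f$, neither your argument nor the paper's establishes that.
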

\begin{proof} Fix $f(s,t)\in \H$, $f=\L(\varphi)$ for $\varphi\in \D^\iota(\L)$. Fix $\psi\in \D(\L)$, increasing, such that $\psi\geq |\varphi|$ and denote $g=\L(\psi)$. 
By Proposition \ref{p3}, $\B_\alpha f(-s,t)$ is holomorphic in $s\in \Co_{1^+}$. 
Fix $s_0>2$. Again, by Proposition \ref{p3}, $g_{-s_0}(t)\in \D(\B_\alpha)$.  Proposition \ref{prop-estimate} now implies that $f(-s,t)\in \D(\B_\alpha)$ for $s\in \Co_{2^-}$ and $\B_\alpha f({-s},t)$ is holomorphic in $s\in \Co_{2^-}$ as long as $-s\in \Reg(f)$. 
We conclude that $f(-s,t)\in \D(\B_\alpha)$ for any $-s\in \Reg(f)$ and $\B_\alpha f({-s},t)$ is holomorphic in the same domain.

By Proposition \ref{p1}, $\B_\alpha f=\L(\beta(u)\varphi(u))$. Recall that (see \eqref{eq19}) that the function $\beta(u)$ is continuous, bounded in a neighborhood of $0$ and $ \beta(u)=O(u^\nu)$, $u\to +\infty$. Therefore, $\beta(u)\varphi(u)\in \D^\iota(\L)$. From Proposition \ref{p3} we know that for $s\in \Co_{1^+}$, $\B_\alpha f(-s,t)=(\B_\alpha f)_{-s}(t)$.

Because the convergence of the series $\B_\alpha f(s,t)$ is locally uniform with respect to $t$, and $f(s,t)$ is differentiable with respect to $t$ we obtain that $\B_\alpha f(s,t)$ is differentiable with respect to $t$ and $\del$ commutes with $\B_\alpha$. Thus,
$$
\del\B_\alpha f(-s,t)=\B_\alpha \del f(-s,t)=-s\B_\alpha f(-s-1,t),
$$
and $\del\B_\alpha f(s,t)$ is therefore holomorphic in $s\in \Reg(f)$. The conditions of Definition \ref{def-H} are satisfied and in conclusion $\B_\alpha f(s,t)$ is an element of $\H$.
\end{proof}

\begin{Cor}\label{cor-independence}
The action of $\B_\alpha$ on $\H$ depends only on $\alpha(z)$.
\end{Cor}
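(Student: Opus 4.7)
The key is to combine the concrete Laplace-side formula from Proposition~\ref{p1} with the analyticity in $s$ guaranteed by Theorem~\ref{t2m}, and then invoke the identity theorem for holomorphic functions. Concretely, let $B_\alpha(z)$ and $B'_\alpha(z)$ be two multi-power series expansions of $(-1)^\nu \ln(z)^\nu \alpha(z)$ around $z=1$ that both converge absolutely and uniformly in a neighborhood of $(0,1]$, and let $\B_\alpha$ and $\B'_\alpha$ denote the corresponding Bernoulli operators built from them. We want $\B_\alpha f = \B'_\alpha f$ for every $f(s,t)\in \H$.

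The plan is to first pin down the action of both operators on $\Imag(\L)$. By Proposition~\ref{p1}, for any $f=\L(\varphi)\in \Imag(\L)$ one has
\[
\B_\alpha f = \L(\beta(u)\varphi(u)) = \B'_\alpha f,
\]
since $\beta(u) = u^\nu \alpha(e^{-u})$ is defined purely from the function $\alpha(z)$, independent of how we expand it as a multi-power series. This identifies the two operators on $\Imag(\L)$.

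Next, I would transfer this equality to $\H$ along the half-plane where $f(-s,t)$ is genuinely a Laplace transform. Fix $f(s,t)\in \H$ with $f(t)=f(-1,t)=\L(\varphi)$. By the fifth item in Definition~\ref{def-H}, for every $s\in \Co_{1}$ we have $f(-s,t) = f_{-s}(t) = \L\bigl(\varphi(u)u^{s-1}/\Gamma(s)\bigr)(t) \in \Imag(\L)$, so the previous step gives
\[
\B_\alpha f(-s,t) = \L\!\left(\beta(u)\varphi(u)\tfrac{u^{s-1}}{\Gamma(s)}\right)(t) = \B'_\alpha f(-s,t)
\quad\text{for all } s\in \Co_1,\ t\in \Re_+.
\]
Equivalently, $\B_\alpha f$ and $\B'_\alpha f$ agree on the left closed half-plane $\{s\in \Co:\Real(s)\le -1\}$.

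Finally, I would upgrade this to agreement on all of $\Reg(f)$ via analytic continuation. By Theorem~\ref{t2m}, both $\B_\alpha f$ and $\B'_\alpha f$ belong to $\H$ and satisfy $\Reg(\B_\alpha f)=\Reg(\B'_\alpha f)=\Reg(f)$; in particular, each is a single-valued holomorphic function of $s$ on the complement $\Reg(f)$ of a discrete subset of $\Co$, which is connected. Since the half-plane $\{\Real(s)\le -1\}\subseteq \Reg(f)$ is a set with non-empty interior, the identity theorem forces $\B_\alpha f(s,t)=\B'_\alpha f(s,t)$ for all $s\in \Reg(f)$ and $t\in \Re_+$, and hence as elements of $\H$. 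There is no serious obstacle here; the only thing worth checking carefully is that the hypotheses of Theorem~\ref{t2m} apply equally to any choice of defining multi-power series, so that the analytic continuation framework genuinely applies to both $\B_\alpha f$ and $\B'_\alpha f$ simultaneously.
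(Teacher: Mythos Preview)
Your proof is correct and follows essentially the same approach as the paper's own argument: first observe that on $\Imag(\L)$ the action is given by $\L(\beta(u)\varphi(u))$ and hence depends only on $\alpha(z)$, then use analytic continuation in $s$ (via Theorem~\ref{t2m}) to propagate the equality from the half-plane where $f(-s,t)\in\Imag(\L)$ to all of $\Reg(f)$. The paper compresses this into two sentences, while you have spelled out the identity-theorem step explicitly; your caveat about Theorem~\ref{t2m} applying to any choice of multi-power series is appropriate and causes no difficulty, since the proof of that theorem uses only the defining data of a Bernoulli operator.
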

\begin{proof}
Indeed, the action of $\B_\alpha$ on $\Imag(\L)$ depends only on $\alpha(z)$ and $\B_\alpha f(-s,t)$ is completely determined by its restriction to $s\in \Co_{1^+}$ for which $f(-s,t)\in \Imag(\L)$.
\end{proof}

\begin{Cor} Let $f(s,t)\in \H$ and assume that  $f=\L(\varphi)$ with $\varphi(u)=o(u^\gamma)$, $u\to 0^+$, for some $\gamma>-1$. Then, $\B_\alpha f(-s,t)$ is the analytic continuation of 
$$
s^{\overline{\nu}}\Di_\alpha^f(s+\nu,t)=s^{\overline{\nu}}\A_\alpha f(-s-\nu,t)=\B_\alpha f(-s, t), \quad s\in \Co_{1^+}, 
$$
and $\B_\alpha f(n,t)=\T_\alpha(-d) f(n,t)$ if $n\in \Reg(f)$.
\end{Cor}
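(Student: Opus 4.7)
The statement has three components that I would prove in sequence: (a) the chain of equalities $\B_\alpha f(-s,t) = s^{\overline{\nu}}\A_\alpha f(-s-\nu,t) = s^{\overline{\nu}}\Di_\alpha^f(s+\nu,t)$ for $s\in \Co_{1^+}$; (b) that $\B_\alpha f(-s,t)$ furnishes the analytic continuation of this common expression to $-\Reg(f)$; and (c) the closed form $\B_\alpha f(n,t)=\T_\alpha(-\del) f(n,t)$ at non-singular non-negative integers. Steps (a) and (b) are essentially repackagings of earlier results; the only point requiring attention is (c), where the action of $\B_\alpha$ needs to be compared against its action on a polynomial truncation.

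For (a), the hypothesis $\varphi(u)=o(u^\gamma)$ with $\gamma>-1$ is exactly the input to Proposition \ref{p3}(ii), which immediately yields
$$
\B_\alpha f(-s,t)=(-1)^\nu \A_\alpha f_{-s}^{(\nu)}(t), \qquad s\in \Co_{1^+}.
$$
Iterating Corollary \ref{cor} (which gives $\del f_{-s}=-s f_{-s-1}$) produces $f_{-s}^{(\nu)}(t)=(-1)^\nu s(s+1)\cdots(s+\nu-1)f_{-s-\nu}(t)=(-1)^\nu s^{\overline{\nu}} f_{-s-\nu}(t)$, so the two signs cancel and $\B_\alpha f(-s,t)=s^{\overline{\nu}}\A_\alpha f(-s-\nu,t)$. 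The final equality $\A_\alpha f(-s-\nu,t)=\Di_\alpha^f(s+\nu,t)$ is immediate from the definitions of both series. For (b), Theorem \ref{t2m} asserts $\B_\alpha f\in \H$ with $\Reg(\B_\alpha f)=\Reg(f)$; in particular $\B_\alpha f(-s,t)$ is a single-valued meromorphic function on all of $\Co$ with singular locus contained in $-\Sing(f)$, and it agrees with the series expression on the non-empty open set $\Co_{1^+}\subset -\Reg(f)$, so uniqueness of analytic continuation settles the claim.

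For (c), Proposition \ref{peq9} gives that $f(n,\cdot)$ is a polynomial in $t$ of degree at most $n$ for each $n\in \Z_{\geq 0}\cap \Reg(f)$. The substantive step is to argue that the value of $\B_\alpha f$ at $s=n$, defined a priori via the extension through $\H$, agrees with the result of applying $\B_\alpha$ directly to the polynomial $f(n,\cdot)$. Here I would use Proposition \ref{peq9} once more: the identity
$$
\Del_{\be}^{\mathbf{i}}f(-s,t)=\frac{\Gamma(s+|\mathbf{i}|)}{\Gamma(s)}(-1)^{|\mathbf{i}|}\int_{[\mathbf{0},\be]^{\mathbf{i}}} \E_{\mathbf{u}} f(-s-|\mathbf{i}|,t)\,\mathbf{du}
$$
shows that at $s=-n$ the prefactor $\Gamma(s+|\mathbf{i}|)/\Gamma(s)$ vanishes whenever $|\mathbf{i}|>n$ (the denominator has a pole while the numerator is finite and non-zero). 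Independently, the standard fact that $\Del_{\be}^{\mathbf{i}}$ annihilates polynomials of degree less than $|\mathbf{i}|$ makes the direct application of $\B_\alpha$ to $f(n,\cdot)$ also a finite sum truncated at $|\mathbf{i}|\leq n$. The two finite sums match term by term. Finally, Proposition \ref{rem-todd} identifies $\B_\alpha$ with $\T_\alpha=\T_\alpha(-\del)$ on polynomials, giving $\B_\alpha f(n,t)=\T_\alpha(-\del)f(n,t)$ and concluding the proof.
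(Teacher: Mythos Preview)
Your proof is correct and follows the same route the paper indicates by its citation list (Theorem \ref{t1}/Proposition \ref{p3}, Theorem \ref{t2m}, Remark \ref{rem-deriv}/Corollary \ref{cor}, Proposition \ref{peq9}, Proposition \ref{rem-todd}); you have simply unpacked the details the paper leaves implicit. One minor remark on part (c): the ``substantive step'' you isolate is actually automatic, since Theorem \ref{t2m} shows that the series $\sum_{\mathbf{i}} c_{\mathbf{i}}\Del_{\be}^{\mathbf{i}} f(s,t)$ converges for every $s\in\Reg(f)$, so the value $\B_\alpha f(n,t)$ \emph{is} by construction the series applied to $f(n,\cdot)$ --- there is no separate ``extension through $\H$'' to reconcile, and your $\Gamma$-factor computation, while correct, is redundant with the polynomial annihilation observation.
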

\begin{proof}
Straightforward from Theorem \ref{t1}, Theorem \ref{t2m}, Proposition \ref{p3}, Remark \ref{rem-deriv}, Proposition \ref{rem-todd}, and Proposition \ref{peq9}.
\end{proof}
%%%%%%%%%%%%%%%%%%%%%%%%%%%%%%%%%%%%%%%%%%%%%%%%%%%%%%%%%%%%%%%%%%%%%%%%%%%%%
\subsection{} Another consequence of Theorem \ref{t2m} is the following

\begin{Thm}\label{thm-module} The map $$\Tcal\times \H\to \H,\quad (\alpha(z), f(s,t))\mapsto \B_\alpha f(s,t)$$
gives a $\Tcal$-module structure on $\H$.
\end{Thm}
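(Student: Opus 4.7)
The plan is to verify the module axioms for the action $(\alpha,f)\mapsto \B_\alpha f$. By Theorem~\ref{t2m} the action is well defined, and by Corollary~\ref{cor-independence} it depends only on $\alpha(z)$, not on the auxiliary multi-power series $B_\alpha$. Linearity in $f$ is immediate from the series definition of $\B_\alpha$ as a sum of linear operators $\Del_{\be}^{\mathbf{i}}$ (with the extension to all of $\H$ provided by Theorem~\ref{t2m}), and the unit axiom $\B_1 = \id$ is immediate from $B_1(z)=1$. The remaining axioms, multiplicativity $\B_{\alpha_1\alpha_2}=\B_{\alpha_1}\circ \B_{\alpha_2}$ and additivity $\B_{\alpha_1+\alpha_2}=\B_{\alpha_1}+\B_{\alpha_2}$, constitute the substantive content.

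The strategy is a two-step argument: verify the identities first on the subspace $\Imag(\L)\subset \H$ using Proposition~\ref{p1}, then extend to all of $\H$ by analytic continuation. On $\Imag(\L)$, we have $\B_\alpha \L(\varphi)=\L(\beta_\alpha(u)\varphi(u))$ with $\beta_\alpha(u)=u^{\nu_\alpha}\alpha(e^{-u})$. Since pole orders add under multiplication of tame series, $\nu_{\alpha_1\alpha_2}=\nu_{\alpha_1}+\nu_{\alpha_2}$, and thus $\beta_{\alpha_1\alpha_2}(u)=\beta_{\alpha_1}(u)\beta_{\alpha_2}(u)$ as functions of $u$; this gives
\[
\B_{\alpha_1\alpha_2}\L(\varphi) = \L(\beta_{\alpha_1}\beta_{\alpha_2}\varphi) = \B_{\alpha_1}\L(\beta_{\alpha_2}\varphi) = \B_{\alpha_1}\B_{\alpha_2}\L(\varphi).
\]
For additivity I reduce to the identity at the level of $\beta$-functions, leveraging the equivalent representation $\B_\alpha f = (-1)^{\nu_\alpha}\A_\alpha f^{(\nu_\alpha)}$ from Theorem~\ref{t1} and the derivative relation $\del f(-s,t)=-sf(-s-1,t)$ from Corollary~\ref{cor} to reconcile the possibly differing pole orders of $\alpha_1$, $\alpha_2$, and $\alpha_1+\alpha_2$ by bringing all operators to a common level of differentiation before comparison.

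To propagate these identities from $\Imag(\L)$ to all of $\H$ I invoke the identity principle. For $f\in\H$, both sides of each module identity are elements of $\H$ by Theorem~\ref{t2m}, hence are single-valued holomorphic functions of $s$ on the common domain $\Reg(f)$. They already coincide on the half-plane $\Co_{1^+}\subset \Reg(f)$, since by Definition~\ref{def-H} the restriction $f(-s,t)=f_{-s}(t)$ lies in $\Imag(\L)$ there. Fixing $t\in\Re_+$, the identity principle for holomorphic functions of one complex variable extends the equality from $\Co_{1^+}$ to all of $\Reg(f)$, and since elements of $\H$ are determined by their values on the regular set, the identities hold on all of $\H$.

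The main technical obstacle is the additivity step: when $\nu_{\alpha_1}\neq\nu_{\alpha_2}$, the operators $\B_{\alpha_i}$ carry different prefactors $\del^{\nu_{\alpha_i}}$ via Theorem~\ref{t1}, and the desired equality $\B_{\alpha_1+\alpha_2}=\B_{\alpha_1}+\B_{\alpha_2}$ has to be interpreted via the $\H$-intrinsic derivative relation of Corollary~\ref{cor} rather than directly at the level of the $\beta$-functions; this is the only part of the argument where the finer structure of $\H$ (and not merely $\Imag(\L)$) is essential.
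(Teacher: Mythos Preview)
Your multiplicativity argument is correct and essentially equivalent to the paper's. The paper is slightly more direct: it observes that the product $B_{\alpha}B_{\alpha'}$ of the two multi-power series is itself an admissible multi-power series expansion for $(-\ln z)^{\nu_\alpha+\nu_{\alpha'}}\alpha(z)\alpha'(z)$, so the difference operator it defines is literally $\B_\alpha\B_{\alpha'}$; Corollary~\ref{cor-independence} then says this coincides with $\B_{\alpha\alpha'}$ computed from any other choice. Your route via Proposition~\ref{p1} on $\Imag(\L)$ followed by analytic continuation reproduces exactly the argument behind Corollary~\ref{cor-independence}, so the two approaches amount to the same thing.

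The genuine problem is your additivity step. The map $\alpha\mapsto\B_\alpha$ is \emph{not} additive when the pole orders differ, and no amount of ``bringing to a common level of differentiation'' will fix this. On $\Imag(\L)$ Proposition~\ref{p1} gives $\B_\alpha\L(\varphi)=\L(u^{\nu_\alpha}\alpha(e^{-u})\varphi)$. If $\nu_{\alpha_1}>\nu_{\alpha_2}$ then $\nu_{\alpha_1+\alpha_2}=\nu_{\alpha_1}$ and
\[
\B_{\alpha_1+\alpha_2}\L(\varphi)-\B_{\alpha_1}\L(\varphi)=\L\bigl(u^{\nu_{\alpha_1}}\alpha_2(e^{-u})\varphi\bigr),
\]
whereas $\B_{\alpha_2}\L(\varphi)=\L\bigl(u^{\nu_{\alpha_2}}\alpha_2(e^{-u})\varphi\bigr)$; the two differ by a factor $u^{\nu_{\alpha_1}-\nu_{\alpha_2}}$ inside the transform, hence are different functions of $t$. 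Concretely, with $\alpha_1(z)=1/(1-z)$ and $\alpha_2(z)=1$ one has $\B_{\alpha_2}=\id$ but $\B_{\alpha_1+\alpha_2}-\B_{\alpha_1}=\L(u\varphi)=-\del$, which is not the identity. Your appeal to Theorem~\ref{t1} and Corollary~\ref{cor} cannot repair this: those relations let you rewrite $\B_\alpha f(-s,t)$ in terms of shifted arguments, but they do not turn $-\del$ into $\id$.

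The paper sidesteps this entirely: its proof checks only the multiplicative compatibility $\B_\alpha\B_{\alpha'}=\B_{\alpha\alpha'}$ and the unit, so the ``$\Tcal$-module structure'' should be read as an action of the multiplicative monoid of $\Tcal$, not as a module over $\Tcal$ as a ring. Your proposal overreaches by asserting additivity.
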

\begin{proof}
We only have to argue that, for $\alpha,\alpha^\prime\in \Tcal$ we have $\B_\alpha\B_{\alpha^\prime}=\B_{\alpha\alpha^\prime}$ as operators on $\H$. This follows from Corollary \ref{cor-independence} and the fact that $B_\alpha B_{\alpha^\prime}$ is a multi-power series expansion corresponding to $\alpha \alpha^\prime$.
\end{proof}

It would be interesting to acquire some basic understanding of the $\Tcal$-mod structure on $\H$. For example, the submodule $\Tcal t^s$ consists of a certain class of Dirichlet series; we look more closely at this particular situation in the next section.

%%%%%%%%%%%%%%%%%%%%%%%%%%%%%%%%%%%%%%%%%%%%%%%%%%%%%%%%%%%%%%%%%%%%%%%%%%%%%

\section{Dirichlet series}\label{dirichlet}

\subsection{} In this section we explore the consequences of Theorem \ref{t2m} for the particular case $f(s,t)=t^s\in \H$ or, equivalently, for $\varphi(u)=1$. The first thing to note is that $f(s,t)=t^s$ is entire.

\begin{Thm}\label{t2}
The following hold
\begin{enumerate}[label={\roman*)}]
\item $t^{s}\in \D(\B_\alpha)$ for all $s\in \Co$;
\item $\B_\alpha(t^{s})$ is holomorphic in $s\in \Co$;
\item For $s\in \Co_{0^+}$ we have
$$
\B_\alpha(t^{-s})=s^{\overline{\nu}}\A_\alpha(t^{-s-\nu})=s^{\overline{\nu}}\Di_\alpha(s+\nu,t).
$$
\end{enumerate}
\end{Thm}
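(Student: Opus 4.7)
The plan is to recognize that the theorem is the specialization of Theorem \ref{t2m} to the seed function $f(s,t)=t^s$, so essentially the work is (a) checking that $t^s$ belongs to $\H$ and (b) reducing part (iii) to an instance of Theorem \ref{t1}. First I would verify the membership: $t^s$ is entire in $s$, so $\Reg(f)=\Co$ (there are no singularities, which vacuously constitutes an isolated set), it is $\mathscr{C}^\infty$ in $t\in \Re_+$ with $\del t^s = s\,t^{s-1}$, and $\Reg(\del f)=\Co\supseteq \Reg(f)$. For the Laplace-transform conditions, set $\varphi(u)\equiv 1$; then $\varphi\in \D(\L)$, $\varphi\leq \psi\equiv 1$ with $\psi$ (weakly) increasing, so $\varphi\in \D^\iota(\L)$, and $\L(1)(t)=1/t=f(-1,t)$. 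For $s\in \Co_1$ the convolution formula following Theorem \ref{convolution} gives $\L(u^{s-1}/\Gamma(s))(t)=t^{-s}$, which is precisely $f_{-s}(t)=f(-s,t)$. Hence $t^s\in \H$ with $\Reg(t^s)=\Co$.

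Part (i) then follows from the inclusion $\H\subset \D(\B_\alpha)$ in Theorem \ref{t2m}, and part (ii) follows because Theorem \ref{t2m} gives $\B_\alpha(t^s)\in \H$ with $\Reg(\B_\alpha(t^s))=\Reg(t^s)=\Co$, which by the definition of $\Reg$ means $\B_\alpha(t^s)$ is holomorphic on all of $\Co$.

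For part (iii), fix $s\in \Co_{0^+}$ and apply Theorem \ref{t1} to the function $t^{-s}=\L(u^{s-1}/\Gamma(s))\in \Imag(\L)$. The hypothesis $\varphi(u)=o(u^\gamma)$ with $\gamma>-1$ is satisfied because $u^{s-1}/\Gamma(s)=o(u^{\Real(s)-1+\epsilon})$ and $\Real(s)>0$. Theorem \ref{t1} then yields
\[
\B_\alpha(t^{-s})=(-1)^\nu \A_\alpha \bigl(\del^\nu t^{-s}\bigr),
\]
and a direct computation gives $\del^\nu t^{-s}=(-s)(-s-1)\cdots(-s-\nu+1)\,t^{-s-\nu}=(-1)^\nu s^{\overline{\nu}}\,t^{-s-\nu}$. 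Substituting and unfolding the definition of $\A_\alpha$ yields
\[
\B_\alpha(t^{-s})=s^{\overline{\nu}}\,\A_\alpha(t^{-s-\nu})=s^{\overline{\nu}}\sum_{n=0}^{\infty}\frac{a_{n+1}}{(t+n)^{s+\nu}}=s^{\overline{\nu}}\,\Di_\alpha(s+\nu,t),
\]
as required. The only real subtlety is bookkeeping: ensuring that $s\in \Co_{0^+}$ is exactly the range where both $t^{-s}\in \Imag(\L)$ with the required decay and the Dirichlet series on the right converges absolutely (so that $\A_\alpha$ makes classical sense there). Everything past that half-plane is covered by (ii), which provides the analytic continuation to $s\in \Co$ without extra work.
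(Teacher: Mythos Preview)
Your proof is correct and follows exactly the paper's approach: apply Theorem \ref{t2m} to $t^s\in\H$ for (i)--(ii) and Theorem \ref{t1} for (iii), with the explicit verification that $\varphi\equiv 1$ places $t^s$ in $\H$. One small slip: in your check of the $o$-hypothesis you wrote $u^{s-1}/\Gamma(s)=o(u^{\Real(s)-1+\epsilon})$, but the exponent should be $\Real(s)-1-\epsilon$ (so that $\gamma=\Real(s)-1-\epsilon>-1$ when $\Real(s)>0$); the conclusion is unaffected.
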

\begin{proof} 
By Theorem \ref{t2m}, $\B_\alpha(t^{s})$ is holomorphic in $s\in \Co$. For part iii) apply Theorem \ref{t1}.
\end{proof}

\subsection{} Theorem \ref{t2} can be restated in terms of the Dirichlet series associated to $\alpha$. Let us first define the following concept.
\begin{Def} Let $P[t]\in \Co[t]$ be a polynomial of degree $n$. The argument $t_0\in \Co$ is said to be generic for $P[t]$ if $\del^i P[t_0]\neq 0$ for all $0\leq i\leq n-1$, and otherwise it is said to be special. If $\nu\geq 1$, the generic and special arguments for the Dirichlet series $\Di_\alpha(s,t)$ are defined as the generic and, respectively,  special arguments of the Bernoulli polynomial $B_\alpha[\nu-1,t]$. If $\nu=0$, all arguments are considered to be generic.
\end{Def}

\begin{Thm}\label{t3}
The series $\Di_\alpha(s,t)$ 
\begin{enumerate}[label={\roman*)}]
\item converges absolutely and is holomorphic in $s\in \Co_{\nu^+}$;
\item has meromorphic continuation to $s\in \Co$ with only simple poles.
\end{enumerate}
More precisely, for $t_0\in \Co$ let $(p_n)_{1\leq n\leq \nu}$, $p_\nu\neq 0$, denote the coefficients in the expansion
$$
B_\alpha[\nu-1;t]=p_\nu(t-t_0)^{\nu-1}+\cdots+p_2(t-t_0)+p_1. 
$$
Then,
\begin{enumerate}[resume, label={\roman*)}]
\item The set of poles of $\Di_\alpha(s,t_0)$ is $P_\alpha(t_0)=\{n ~|~1\leq n\leq \nu, ~~p_n\neq 0\}$;
\item  If $\nu\geq 1$, $\Di_\alpha(s,t_0)$ has a simple pole at $s=\nu$ with residue $(-1)^\nu k_\nu/(\nu-1)!$;
\item The residue of $\Di_\alpha(s,t_0)$ at the pole $s=n$ is \begin{equation}\Res(\Di_\alpha(s,t_0), n)=\frac{(-1)^{\nu}}{(\nu-1)!}(-1)^np_n.\end{equation}
\end{enumerate}
Furthermore, for $n\geq 0$ we have  
\begin{enumerate}[resume, label={\roman*)}]
\item \begin{equation}\Di_\alpha(-n,t)=(-1)^\nu \frac{n!}{(\nu+n)!} B_\alpha[\nu+n;t].\end{equation}
\end{enumerate}
\end{Thm}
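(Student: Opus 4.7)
The strategy is to leverage the master identity of Theorem \ref{t2}(iii), namely $\B_\alpha(t^{-s}) = s^{\overline{\nu}}\Di_\alpha(s+\nu, t)$ for $\Real(s) > 0$, which I rewrite as
$$
\Di_\alpha(s, t) = \frac{\B_\alpha(t^{\nu - s})}{(s - \nu)^{\overline{\nu}}}.
$$
Since $\B_\alpha(t^{\nu - s})$ is entire in $s$ by Theorem \ref{t2}(ii), the right-hand side gives the meromorphic continuation claimed in (ii), with candidate simple poles at the zeros of the denominator $(s - \nu)^{\overline{\nu}} = (s-\nu)(s-\nu+1)\cdots(s-1)$, namely $s = 1, 2, \ldots, \nu$. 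For (i), I apply Proposition \ref{p2} to $t^{-s} = \L(u^{s-1}/\Gamma(s))$: for $\Real(s) > \nu$ the function $\varphi(u) = u^{s-1}/\Gamma(s)$ satisfies $\varphi(u) = o(u^{\gamma + \nu})$ for some $\gamma > -1$, so the proposition yields absolute convergence of $\A_\alpha t^{-s} = \Di_\alpha(s, t)$ with holomorphy standard.

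For (iv) and (v), I extract the residues of the rearranged formula at $s = n$ with $1 \leq n \leq \nu$. Using that $t^{\nu - n}$ is a polynomial of non-negative degree (so $\B_\alpha(t^{\nu-n}) = B_\alpha[\nu-n; t]$ by Definition \ref{def-bpoly}) and the elementary identity $\prod_{j=1,\,j\neq n}^\nu (n-j) = (-1)^{\nu-n}(n-1)!(\nu-n)!$, I get
$$
\Res_{s = n} \Di_\alpha(s, t_0) = \frac{(-1)^{\nu - n}}{(n-1)!(\nu - n)!}\, B_\alpha[\nu - n; t_0].
$$
Substituting the relation $B_\alpha[\nu - n; t_0] = \frac{(n-1)!(\nu-n)!}{(\nu-1)!}\,p_n$ from Proposition \ref{prop-taylor} collapses this to $\frac{(-1)^\nu (-1)^n}{(\nu-1)!}\, p_n$, proving (v). Specializing $n = \nu$ and invoking $B_\alpha[0; t] = (-1)^\nu k_\nu$ from Proposition \ref{prop-top} gives (iv). Part (iii) is then immediate: the nonzero prefactor relating $B_\alpha[\nu - n; t_0]$ to $p_n$ ensures that the simple pole at $s = n$ survives (is not removable) precisely when $p_n \neq 0$.

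For (vi), I evaluate the rearranged formula at $s = -n$, $n \geq 0$. The numerator becomes $\B_\alpha(t^{n+\nu}) = B_\alpha[n+\nu; t]$, while the denominator is $(-n-\nu)^{\overline{\nu}} = (-1)^\nu (n+\nu)!/n!$, yielding
$$
\Di_\alpha(-n, t) = (-1)^\nu\, \frac{n!}{(n+\nu)!}\, B_\alpha[n+\nu; t].
$$
There is no substantial technical obstacle in this proof: once the master identity of Theorem \ref{t2} is in hand, the entire theorem is an unpacking of that identity together with the polynomial-evaluation statements of Propositions \ref{prop-top} and \ref{prop-taylor} and some careful bookkeeping of signs and factorials. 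The most delicate conceptual point is pinning down which candidate poles are actually present in (iii), and this is handled cleanly by the faithful correspondence in Proposition \ref{prop-taylor} between $B_\alpha[\nu - n; t_0]$ and the Taylor coefficient $p_n$.
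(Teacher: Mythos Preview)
Your proof is correct and follows essentially the same route as the paper's: both rewrite Theorem~\ref{t2}(iii) as $\Di_\alpha(s,t)=\B_\alpha(t^{\nu-s})/(s-\nu)^{\overline{\nu}}$, read off the candidate simple poles at $s=1,\dots,\nu$, compute the residue at $s=n$ as $\frac{(-1)^{\nu-n}}{(n-1)!(\nu-n)!}B_\alpha[\nu-n;t_0]$, and then convert to $p_n$ via equation~\eqref{eq13} of Proposition~\ref{prop-taylor}. The only cosmetic difference is that for part (i) you invoke Proposition~\ref{p2} directly on $\varphi(u)=u^{s-1}/\Gamma(s)$, whereas the paper cites Theorem~\ref{t2} (which itself rests on that proposition).
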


\begin{proof}
From Theorem \ref{t2}, $\Di_\alpha(s,t)$ converges for $s\in \Co_{\nu^+}$ to a holomorphic function in $s$ and the function $\B_\alpha(t^{-s+\nu})$ is the holomorphic continuation of $(s-\nu)^{\overline{\nu}}\Di_\alpha(s,t)$  to $s\in \Co$. If $\nu=0$ then $\Di_\alpha(s,t)$ is holomorphic. If $\nu\geq 1$, it is clear that the potential poles of $\Di_\alpha(s,t)$ are simple and form a subset of $\{1,2,\dots,\nu\}$. More precisely, $1\leq n\leq \nu$ is a pole of  $\Di_\alpha(s,t_0)$ if and only if 
$$\B_\alpha(t^{-n+\nu})=B_\alpha[-n+\nu;t]=\frac{1}{(\nu-1)^{\underline{n-1}}}\del^{(n-1)}B_\alpha[\nu-1;t]$$ 
does not vanish at $t=t_0$. This, in turn, translates to $p_n\neq 0$. In such a case
\begin{align*}
\Res(\Di_\alpha(s,t_0), n)&=\lim_{t\to t_0}\lim_{s\to n}\frac{s-n}{(s-\nu)^{\overline{\nu}}}\B_\alpha(t^{-s+\nu})\\
&=\frac{(-1)^{\nu-n}}{(\nu-n)!(n-1)!}B_\alpha[\nu-n;t_0]\\
&=\frac{(-1)^{\nu-n}}{(\nu-1)!}p_n.
\end{align*}
In the last step we used \eqref{eq13}. Since $B_\alpha[0;t]=(-1)^\nu k_\mu$ we obtain that the residue at $s=\nu$ equals $(-1)^\nu k_\mu/(\nu-1)!$.
The last claim is clear from Theorem \ref{t2}iii).
\end{proof}
\begin{Cor}\label{cor-t3}
If $t_0$ is a generic  argument for $\Di_\alpha(s,t)$ then the singularities of $\Di_\alpha(s,t_0)$ are simple poles at $s=1,\dots,\nu$. If $t_0$ is a special argument, $1\leq n\leq \nu$ is a removable singularity of $\Di_\alpha(s,t_0)$ if and only if $t=t_0$ is a critical point of $B_\alpha[\nu-n; t]$.
\end{Cor}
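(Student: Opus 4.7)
The plan is to trace the statement back to Theorem \ref{t3}, which already characterizes the poles of $\Di_\alpha(s,t_0)$ via the set $P_\alpha(t_0)=\{n:1\leq n\leq\nu,\ p_n\neq 0\}$, where $(p_n)_{1\leq n\leq \nu}$ are the Taylor coefficients of $B_\alpha[\nu-1;t]$ around $t_0$. All that remains is to re-express the conditions $p_n=0$ in terms of the polynomials $B_\alpha[\nu-n;t]$ and to unpack the definitions of generic and special arguments.

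First, I would invoke identity (\ref{eq13}), namely $B_\alpha[\nu-n;t_0]=\tfrac{(n-1)!}{(\nu-1)^{\underline{n-1}}}p_n$. Since the falling factorial $(\nu-1)^{\underline{n-1}}$ is nonzero for every $1\leq n\leq\nu$, this yields the key equivalence $p_n=0 \iff B_\alpha[\nu-n;t_0]=0$. Combined with Theorem \ref{t3}(iii), this shows that $s=n$ is a removable singularity of $\Di_\alpha(s,t_0)$ precisely when $t_0$ is a zero of the polynomial $B_\alpha[\nu-n;t]$, which is the ``critical point'' condition appearing in the corollary. As a sanity check, for $n=\nu$ the polynomial in question is the nonzero constant $B_\alpha[0;t]=(-1)^\nu k_\nu$, so the pole at $s=\nu$ is never removable, consistent with Theorem \ref{t3}(iv).

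Next, for the generic case, I would unpack the definition: $t_0$ is generic exactly when $\del^i B_\alpha[\nu-1;t_0]\neq 0$ for all $0\leq i\leq \nu-2$. Since $p_{i+1}=\tfrac{1}{i!}\del^i B_\alpha[\nu-1;t_0]$, this is equivalent to $p_1,\dots,p_{\nu-1}\neq 0$; together with the always-nonzero leading coefficient $p_\nu$ (from Proposition \ref{prop-top}), this forces $P_\alpha(t_0)=\{1,2,\dots,\nu\}$, so every $s=n$ with $1\leq n\leq \nu$ is an actual simple pole. The special-argument half is then the contrapositive: the singularity at $s=n$ becomes removable exactly when the corresponding Taylor coefficient $p_n$ vanishes, which by the equivalence above is the condition $B_\alpha[\nu-n;t_0]=0$.

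No substantial obstacle is expected, since the entire argument reduces to Theorem \ref{t3} and identity (\ref{eq13}). The only care required is the bookkeeping in the reindexing $i\leftrightarrow n=i+1$ between the derivative order in the genericity definition and the pole location $s=n$, together with reading ``critical point of $B_\alpha[\nu-n;t]$'' as a zero of this polynomial---the interpretation forced by the residue computation in the proof of Theorem \ref{t3}.
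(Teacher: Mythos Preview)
Your proposal is correct and follows the same route as the paper, which treats the corollary as an immediate consequence of Theorem~\ref{t3} (no separate proof is given there). Your observation that ``critical point of $B_\alpha[\nu-n;t]$'' must be read as ``zero of $B_\alpha[\nu-n;t]$'' is exactly right: the proof of Theorem~\ref{t3} shows that $s=n$ is a pole iff $B_\alpha[\nu-n;t_0]\neq 0$, so the phrasing in the corollary is nonstandard and your interpretation is the one forced by the mathematics.
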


%%%%%%%%%%%%%%%%%%%%%%%%%%%%%%%%%%%%%%%%%%%%%%%%%%%%%%%%%%%%%%%%%%%%%%%%%%%%%
\subsection{} An immediate consequence of Theorem \ref{t3} is that it allows one to construct Dirichlet series with any specified  finite set of positive integers as simple poles and any specified residues at those poles.

\begin{Thm}\label{t7}
Let $t_0\in \Re_+$, $P\subset \Z_{>0}$ a finite subset, and $\{r_n~|~n\in P\}\subset \Co^*$. There exist a tame series $\alpha(z)$ such that the singularities of the associated Dirichlet series $\Di_\alpha(s, t_0)$ has the following properties
\begin{enumerate}[label={\alph*)}]
\item the singularities of $\Di_\alpha(s, t_0)$ are precisely the simple poles at $s=n\in P$;
\item $\Res(\Di(s, t_0), n)=r_n$, $n\in P$.
\end{enumerate}
Furthermore, all such tame series $\alpha(z)$ have the same principal part at $z=1$.
\end{Thm}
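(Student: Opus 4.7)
The plan is to reduce the theorem to Proposition \ref{prop-taylor} via the explicit pole and residue formulas of Theorem \ref{t3}. The guiding observation is that any tame series $\alpha(z)$ realizing the prescribed data must have $\nu := \max(P)$ as the order of its pole at $z = 1$: if $\nu > \max(P)$ then Theorem \ref{t3}iv) places a nonzero-residue pole at $s = \nu$ outside $P$, and if $\nu < \max(P)$ then Theorem \ref{t3}iii) prevents a pole at $s = \max(P)$ from appearing. So the pole order is forced.

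First I would fix $\nu = \max(P)$ and invert the residue formula of Theorem \ref{t3}v) to read off the required Taylor coefficients of $B_\alpha[\nu-1;t]$ at $t_0$. Setting
\[
p_n = (-1)^{\nu-n}(\nu-1)!\, r_n \text{ for } n \in P, \qquad p_n = 0 \text{ for } n \in \{1,\dots,\nu\}\setminus P,
\]
I form $P[t] = \sum_{n=1}^{\nu} p_n (t - t_0)^{n-1}$, which is a polynomial of degree exactly $\nu - 1$ because $\nu \in P$ guarantees $p_\nu \neq 0$. Proposition \ref{prop-taylor} then produces a tame $\alpha(z)$ with $B_\alpha[\nu-1;t] = P[t]$ and asserts that the principal part $\alpha_p(z)$ is the same for every such tame series.

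The verification of (a) and (b) for this $\alpha$ should then be a direct read-off: Theorem \ref{t3}iii) identifies the pole set with $\{n \in \{1,\dots,\nu\} : p_n \neq 0\} = P$, and Theorem \ref{t3}v) returns $\Res(\Di_\alpha(s,t_0), n) = r_n$ by the way $p_n$ was chosen. For the uniqueness of the principal part, I would take a second tame realization $\alpha^\prime(z)$; the opening observation forces it to have the same $\nu$, and applying Theorem \ref{t3}v) together with \eqref{eq13} to $\alpha^\prime$ forces the Taylor coefficients at $t_0$ of $B_{\alpha^\prime}[\nu-1;t]$ to coincide with the $p_n$ above. Since both polynomials have degree at most $\nu - 1$, we get $B_{\alpha^\prime}[\nu-1;t] = P[t]$, and Proposition \ref{prop-taylor} yields $\alpha^\prime_p(z) = \alpha_p(z)$.

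The only subtle point is really the forcing $\nu = \max(P)$, since without it a legitimate tame series whose pole order exceeds $\max(P)$ might compete; this is resolved cleanly by the nonvanishing of $k_\nu$ in the residue at $s = \nu$. Once past that step, everything else is formal bookkeeping of signs and factorials against the already-available Proposition \ref{prop-taylor}, so I do not expect any analytic obstruction.
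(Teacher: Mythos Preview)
Your proposal is correct and follows essentially the same route as the paper: invert the residue formula of Theorem \ref{t3}v) to define the coefficients $p_n$, build the polynomial $P[t]$, and invoke Proposition \ref{prop-taylor} for both existence and uniqueness of the principal part. You are in fact slightly more explicit than the paper on the uniqueness claim, spelling out why any competing tame realization is forced to have pole order $\nu=\max(P)$ and hence the same $B_{\alpha'}[\nu-1;t]$; the only omission is the trivial case $P=\emptyset$, which the paper dispatches in one line by taking any tame series holomorphic at $z=1$.
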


\begin{proof}
If $P= \emptyset$ we can pick $\alpha(z)$ to be any tame series which is holomorphic around $z=1$. Assume that $P\neq \emptyset$.
Let $\nu=\max(P)\geq 1$. Define $r_n=0$ for $1\leq n\leq \nu$, $n\not\in P$. For $1\leq n\leq \nu$ let $p_n:=(\nu-1)!(-1)^{\nu+n}r_n$ and
$$
P[t]:=p_\nu(t-t_0)^{\nu-1}+\cdots+p_2(t-t_0)+p_1.
$$
Since $\nu\in P$ we have $r_\nu\neq 0$ so $P[t]$ is a polynomial of degree $\nu-1$. From Proposition \ref{prop-taylor} we know that there exists a tame series $\alpha(z)$ such that $B_\alpha[\nu-1;t]=P[t]$. Theorem \ref{t3} implies now that the singularities of $\Di_\alpha(s, t_0)$ are precisely simple poles at $s=n\in P$ with residues $r_n$, respectively. The uniqueness  of $\alpha_p(z)$ follows again from Proposition \ref{prop-taylor}.
\end{proof}
Even more, we can show that the Dirichlet series $\Di_\alpha(s,t)$ is uniquely determined by it poles, residues, and values at negative integers.

\begin{Thm}\label{t5}
Let $t_0\in \Re_+$, $P\subset \Z_{>0}$ a finite subset, $\{r_n~|~n\in P\}\subset \Co^*$, and $(v_n)_{n\geq 0}\subset \Co$. Then, there exist at most one tame series $\alpha(z)$ such that the associated Dirichlet series $\Di_\alpha(s, t_0)$ has the following properties
\begin{enumerate}[label={\alph*)}]
\item the singularities of $\Di_\alpha(s, t_0)$ are precisely the simple poles at $s=n\in P$;
\item $\Res(\Di(s, t_0), n)=r_n$, $n\in P$;
\item $\Di_\alpha(-n,t_0)=v_n$, $n\geq 0$.
\end{enumerate}
\end{Thm}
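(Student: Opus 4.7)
The plan is to reconstruct the sequence $B_\alpha[k;t_0]$, $k\geq 0$, from the given data $(t_0,P,\{r_n\},(v_n))$, and then invoke Theorem \ref{t4} to conclude uniqueness of $\alpha(z)$. The whole argument is an unpacking of Theorem \ref{t3} combined with the uniqueness result already established in Theorem \ref{t4}.

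First I would extract $\nu$ from $P$. By Theorem \ref{t3}(iv), whenever $\nu\geq 1$ the series $\Di_\alpha(s,t_0)$ has a genuine simple pole at $s=\nu$ with residue $(-1)^\nu k_\nu/(\nu-1)!\neq 0$, so $\nu\in P$; therefore $P=\emptyset$ forces $\nu=0$, and otherwise $\nu=\max(P)$. Either way, $\nu$ is read off from $P$ alone.

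Next, assuming $\nu\geq 1$, I would recover the polynomial $B_\alpha[\nu-1;t]$ from the residue data. By Theorem \ref{t3}(iii) the set $\{n~|~1\leq n\leq \nu,\ p_n\neq 0\}$ equals $P$, and by Theorem \ref{t3}(v) the residue formula inverts to
$$
p_n=(-1)^{\nu+n}(\nu-1)!\,r_n\quad\text{for }n\in P,\qquad p_n=0\quad\text{for }1\leq n\leq \nu,\ n\notin P.
$$
These Taylor coefficients around $t_0$ determine the polynomial $B_\alpha[\nu-1;t]$ uniquely. Using \eqref{eq13}, one then has $B_\alpha[\nu-n;t_0]=\frac{(n-1)!}{(\nu-1)^{\underline{n-1}}}p_n$, so in particular $B_\alpha[k;t_0]$ is known for $0\leq k\leq \nu-1$.

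Finally, I would invert Theorem \ref{t3}(vi): the special values $v_n=\Di_\alpha(-n,t_0)=(-1)^\nu\frac{n!}{(\nu+n)!}B_\alpha[\nu+n;t_0]$ yield
$$
B_\alpha[\nu+n;t_0]=(-1)^\nu\frac{(\nu+n)!}{n!}v_n,\qquad n\geq 0.
$$
When $\nu=0$ this already gives the entire sequence directly, since $v_n=B_\alpha[n;t_0]$. Combined with the previous step (for $\nu\geq 1$), the full sequence $\{B_\alpha[k;t_0]\}_{k\geq 0}$ is determined by the data, and Theorem \ref{t4} forces $\alpha(z)$ to be unique. The only genuinely substantive point is the reading of $\nu$ from $P$ (relying on the non-vanishing of the top residue), which is the principal obstacle — and it is resolved immediately by Theorem \ref{t3}(iv); the remainder is bookkeeping.
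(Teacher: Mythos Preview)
Your proof is correct and follows the same approach as the paper's own proof: recover the full sequence $B_\alpha[k;t_0]$, $k\geq 0$, from the data $(P,\{r_n\},(v_n))$ via Theorem~\ref{t3}, and then invoke Theorem~\ref{t4}. The paper's proof is terse (it says the data ``uniquely determine the sequence of values $B_\alpha[n;t_0]$'' and cites Theorem~\ref{t4}), while you have spelled out the bookkeeping explicitly, including the extraction of $\nu$ from $P$; but the logic is identical.
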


\begin{proof} Assume that there is a tame series $\alpha(z)$ with the properties required in the statement.
As before, $t_0\in \Re_+$, $P\subset \Z_{>0}$ a finite subset, $\{r_n~|~n\in P\}\subset \Co^*$, and $(v_n)_{n\geq 0}\subset \Co$ uniquely determine the sequence of values $B_\alpha[n;t_0]$, $n\geq 0$. By Theorem \ref{t4} the sequence of values uniquely determine the Laurent expansion of $\alpha(z)$ around $z=1$ and therefore $\alpha(z)$.
\end{proof}

\begin{Rem}\label{rem-ext}
As pointed out in Remark \ref{rem-unq}, the data in the statement of Theorem \ref{t5} determines a formal Laurent expansion around $z=1$. The existence of a Dirichlet series specified by the data in Theorem \ref{t5} boils down to the study of the analytic properties of this formal Laurent expansion, specifically, whether it is the Laurent expansion around $z=1$ of some tame $\alpha(z)$. The problem of finding necessary or sufficient conditions for this to happen is akin to the classical converse theorems in the theory of L-functions \cites{Ham1, Ham2, Ham3, HecUbe, Wei, KMP}.
\end{Rem}
%%%%%%%%%%%%%%%%%%%%%%%%%%%%%%%%%%%%%%%%%%%%%%%%%%%%%%%%%%%%%%%%%%%%%%%%%%%%%
\subsection{} The results showing that the poles and residues of a Dirichlet series only depend on information about its holomorphic behavior around $z=1$ suggest that the third condition in the definition of a tame power series is not necessary for the existence of holomorphic continuation and the location of the poles. In what follows we show that this is indeed the case. Let $\eta(z)$ be a power series around $z=0$ that represents a function of one complex variable with the following properties
\begin{itemize}
\item $\eta(z)$ is holomorphic in $D(0,1)$;
\item $\eta(z)$ has a pole at $z=1$ (of order $\nu\geq 0$).
\end{itemize}
We adopt the usual notation for the corresponding Dirichlet series $\Di_\eta(s,t)$, the operators $\A_\eta$, $\T_\eta$, and Bernoulli polynomials $B_\eta[n;t]=\T_\eta(t^n)$ as well as for the principal part $\eta_p$ and homomorphic part $\eta_h(z)$ of the Laurent expansion of $\eta(z)$ around $z=1$. Note that $\eta_h(z)$ represents a function that is holomorphic in  $D(0,1)$  and around $z=1$ while $\eta_p(z)$ is tame. The following statement is known to specialists (see e.g. \cite{EveMer}*{Theorem 3.3.4} or \S\ref{ow}). We include a proof for the reader's convenience. 

\begin{Prop}\label{prop-hol}
Assume that $\nu=0$. Then, the series $\Di_{\eta}(s,t)$
\begin{enumerate}[label={\roman*)}]
\item converges absolutely and is holomorphic in $s\in \Co_{1^+}$;
\item has holomorphic continuation to $s\in \Co$.
\end{enumerate}
Furthermore, for $n\geq 0$ we have  
\begin{enumerate}[resume, label={\roman*)}]
\item \begin{equation}\Di_\eta(-n,t)=B_\eta[n;t].\end{equation}
\end{enumerate}
\end{Prop}
\begin{proof}

From Proposition \ref{p2}, for $s\in \Co_{0^+}$, $t^{-s}\in \D(\A_\eta)$  and $\Di_\eta(s,t)=\A_\eta(t^{-s})=\L(\eta(e^{-u})u^{s-1}/\Gamma(s))$ converges absolutely and is holomorphic in $s\in \Co_{1^+}$.  Let $0<\eps$ strictly smaller than the radius of convergence of the power series of 
$$
\eta(e^{-u})=\sum_{n=0}^\infty {c_n} u^n
$$ 
around $u=0$. Then,
$$
\L(\eta(e^{-u})u^{s-1}/\Gamma(s))(t)=\int_0^\eps e^{-ut}\eta(e^{-u})u^{s-1}/\Gamma(s)du+\int_\eps^\infty  e^{-ut}\eta(e^{-u})u^{s-1}/\Gamma(s) du
$$
The second integral is convergent for all $s\in \Co$ and represents an entire function.
The first integral, after the usual argument that justifies the interchange of integration and summation, writes as
\begin{equation}\label{eq16}
\sum_{n\geq 0} \frac{c_n}{t^{s+n}} \frac{\gamma(s+n,t\eps)}{\Gamma(s)}=\eps^s  \sum_{n\geq 0} {c_n}{\eps^n} s^{\overline{n}} {\gamma^*(s+n,t\eps)}
\end{equation}
where $\gamma(s,z)$ is the lower incomplete Gamma function and 
$$
\gamma^*(s,z)=\frac{\gamma(s,z)}{z^s\Gamma(s)}=e^{-z}\sum_{k=0}^{\infty}\frac{z^k}{\Gamma(s+k+1)}. 
$$
We refer to \cite{ParInc} for the properties of the functions $\gamma(s,z)$ and $\gamma^*(s,z)$. The function $\gamma^*(s,z)$ is entire in $s$ for fixed $z$ and since the series \eqref{eq16} converges absolutely for $s\in \Co_{1^+}$ we obtain that the series converges 
 locally uniformly on compact subsets of $\Co$ and therefore it defines an entire function.
 
 For the proof of part iii) we follow the argument used in the proof of \cite{EveMer}*{Theorem 3.3.4} (see the discussion in \S\ref{ow}). Fix $n\in \Z_{\geq  0}$ and for  $s\in \Co_{1^+}$ write 
\begin{align*}
 \Di_\eta(s,t)&=\frac{1}{\Gamma(s)}\int_0^\infty e^{-tu}\eta(e^{-u})u^{s-1} du\\ &= \frac{1}{\Gamma(s)}\int_0^\infty e^{-tu}\sum_{k=0}^{n+1} {c_k} u^{k+s-1} du +\frac{1}{\Gamma(s)}\int_0^\infty e^{-tu}\left(\eta(e^{-u})-\sum_{k=0}^{n+1} {c_k} u^k\right)u^{s-1} du\\
 &=\sum_{k=0}^{n+1} {c_k} s^{\overline{k}}t^{-s-k} +\frac{1}{\Gamma(s)}\int_0^\infty e^{-tu}\left(\eta(e^{-u})-\sum_{k=0}^{n+1} {c_k} u^k\right)u^{s-1} du.
\end{align*}
 The first term is polynomial of degree $n+1$  in $s$  and the second term, which we denote by $R_n(s,t)$, is given by an integral that converges locally uniformly for $\Real(s)>-n-1$. Consequently, the series 
$$
\sum_{m\geq 0} \frac{1}{\Gamma(s)}\int_m^{m+1} e^{-tu}\left(\eta(e^{-u})-\sum_{k=0}^{n+1} {c_k} u^k\right)u^{s-1} du
$$
converges locally uniformly in $\Real(s)>-n-1$ and the Weierstrass theorem allows us to differentiate term by term with respect to $s$. In conclusion,  $R_n(s,t)$ is holomorphic for  $\Real(s)>-n-1$. Moreover, $R_n(-N,t)=0$ for  $0\leq  N\leq n$ because of the corresponding pole of the Gamma function. Therefore, 
$$
 \Di_\eta(s,t)=\sum_{k=0}^{n+1} {c_k} s^{\overline{k}}t^{-s-k} + R_n(s,t)
$$
holds for $\Real(s)>-n-1$. For $s=-n$ we obtain $$\Di_\eta(-n, t)=\sum_{k=0}^{n} {c_k} (-n)^{\overline{k}}t^{n-k}=\sum_{k=0}^{n} (-1)^k{c_k} n^{\underline{k}}t^{n-k}=\T_\eta(t^n)=B_\eta[n;t],$$
which completes the proof for part iii).
\end{proof}

\subsection{}
We are now ready to state an extension of Theorem \ref{t3} for the series $\eta(z)$.

\begin{Thm}\label{t6}
The series $\Di_{\eta}(s,t)$ has meromorphic continuation to $s\in \Co$ with only simple poles. More precisely, for $t_0\in \Co$ let $(p_n)_{1\leq n\leq \nu}$, $p_\nu\neq 0$, denote the coefficients in the expansion
$$
B_\eta[\nu-1;t]=p_\nu(t-t_0)^{\nu-1}+\cdots+p_2(t-t_0)+p_1. 
$$
Then,
\begin{enumerate}[resume, label={\roman*)}]
\item The set of poles of $\Di_\eta(s,t_0)$ is $P_\eta(t_0)=\{n ~|~1\leq n\leq \nu, ~~p_n\neq 0\}$;
\item  If $\nu\geq 1$, $\Di_\eta(s,t_0)$ has a simple pole at $s=\nu$ with residue $(-1)^\nu k_\nu/(\nu-1)!$;
\item The residue of $\Di_\eta(s,t_0)$ at the pole $s=n$ is \begin{equation}\Res(\Di_\eta(s,t_0), n)=\frac{(-1)^{\nu}}{(\nu-1)!}(-1)^np_n.\end{equation}
\end{enumerate}
Furthermore, for $n\geq 0$ we have  
\begin{enumerate}[resume, label={\roman*)}]
\item \begin{equation}\Di_\alpha(-n,t)=(-1)^\nu \frac{n!}{(\nu+n)!} B_\alpha[\nu+n;t].\end{equation}
\end{enumerate}
\end{Thm}

\begin{proof} We can write 
$$
\Di_\eta(s,t)=\Di_{\eta_p}(s,t)+\Di_{\eta_h}(s,t).
$$
By Proposition \ref{prop-hol}  $\Di_{\eta_h}(s,t)$ has holomorphic continuation to the entire plane. The power series $\eta_p(z)$ is tame and its meromorphic continuation follows from Theorem \ref{t3}. The location of the poles, and the corresponding residues for $\Di_\eta(s,t)$ are inherited from those of $\Di_{\eta_p}(s,t)$ and the remaining claims follow directly from Theorem \ref{t3} applied to $\eta_p(z)$.  For the proof of part iv) we use Proposition \ref{prop-hol} iii) for  $\Di_{\eta_h}(-n,t)$ and Theorem  \ref{t3} vi) for  $\Di_{\eta_p}(-n,t)$.
\end{proof}

%It is tempting to believe that Theorem \ref{t3}vi) also holds true in this case. As pointed out in Remark \ref{rem-unq} the values $B_\eta[n;t_0]$, $n\geq 0$, will uniquely determine the series $\eta(z)$.
%The missing ingredient is the relationship between these values and the meromorphic continuation of the series $\Di_{\eta_h}(s,t)$. More precisely, while the properties of $\eta(z)$ are sufficient to define the operator $\T_\eta$, they are not  sufficient (in this generality) to relate the action of this operator to the meromorphic continuation of the series.
%%%%%%%%%%%%%%%%%%%%%%%%%%%%%%%%%%%%%%%%%%%%%%%%%%%%%%%%%%%%%%%%%%%%%%%%%%%%%
\subsection{} \label{ow}

We state  \cite{EveMer}*{Theorem 3.3.4} adapted to the notation used in this article and briefly discuss its relationship with the previous results. Let $\eta(z)=\sum_{n\geq 0} a_{n+1}z^n $ and $\phi(z)=\sum_{n\geq 0}b_{n+1}z^n$ be power series convergent in $D(0,1)$. Assume that $\eta(z)$ is meromorphic in a neighborhood of $z=1$ and denote by $\nu$ the order of $\eta(z)$ at $z=1$, with the convention that $\nu>0$ if $z=1$ is a pole of order $\nu$, and $\nu<0$ if $z=1$ is a zero of order $-\nu$.  Let $\eta(e^{-z})=\sum_{k=-\nu}^\infty c_k z^k$ denote the expansion of $\eta(e^{-z})$ in a neighborhood of $z=0$. We adopt the usual notation for the associated operators and Dirichlet series. For a Dirichlet series we denote by $\sigma_c$ the abscissa of convergence, by $\sigma_a$ the abscissa of absolute convergence, and by $\sigma_m$ the abscissa of the boundary line of the maximal open right half-plane in which it has meromorphic continuation (both are elements of $\Re\cup\{\pm\infty\}$, and $\sigma_m\leq \sigma_a$). 

\begin{Thm}\label{thm: eve}
With the notation above, assume that $\sigma_a(\Di_\phi)<\infty$ and 
 \begin{equation}\label{eq24} \sum_{1\leq i\leq n} a_i=O(n^r),\quad n\to \infty,\quad  \text{for some } r\geq 0.\end{equation} Then,
\begin{enumerate}[label=\roman*)]
\item $\sigma_a(\Di_{\eta\phi})\leq \max(r,\sigma_a(\Di_\phi)+r)$ and $\sigma_m(\Di_{\eta\phi})\leq \sigma_m(\Di_\phi)+\nu$;
\item For each $K\in \Z_{\geq 0}$, 
\begin{equation}\label{eq23}
\Di_{\eta\phi}(s)=\sum_{k=-\nu}^{K} c_{k}s^{\overline{k}}\Di_\phi(s+k)+R_K(s),
\end{equation}
for $\Real(s)>\max(\sigma_c(\Di_\phi)-K-1, -K-1, \sigma_m(\Di_\phi)+\nu)$, with $R_K(s)$ a holomorphic function in the domain $\Real(s)>\max(\sigma_c(\Di_\phi)-K-1, -K-1)$. Furthermore, $R_K(-m)=0$ for $-m\in \Z_{\leq 0}$ in its domain.
\end{enumerate}
\end{Thm}

This result, applied to $\phi(z)=1$, implies the analytic continuation of $\Di_\eta(s,1)$ as in Proposition \ref{prop-hol} and Theorem \ref{t6}. The arguments for the analytic continuation are to some extent distinct and Proposition \ref{prop-hol} does not make use of the hypothesis \eqref{eq24}. The equation \eqref{eq23} allows for the identification of the poles, residues,  and values at negative integers in the form given in Theorem \ref{t6}. In fact, the proof of Proposition \ref{prop-hol} iii), which identifies the values at negative integers, emulates the corresponding argument in the proof of Theorem \ref{thm: eve}.
There are also connections between Theorem \ref{thm: eve} and Theorem \ref{t2m} that we hope to discuss elsewhere.

%%%%%%%%%%%%%%%%%%%%%%%%%%%%%%%%%%%%%%%%%%%%%%%%%%%%%%%%%%%%%%%%%%%%%%%%%%%%%
\section{Examples}\label{sec-exp}

\subsection{Riemann zeta function} For $\alpha(z)=1/(1-z)$ we have $\nu=1$. The meromorphic continuation of the Dirichlet series $\Di_\alpha(s,t)$ is the Hurwitz zeta function and that of $\Di_\alpha(s)=\zeta(s)$ is the Riemann zeta function. As seen from Remark \ref{rem-bgs}, the polynomials $B_\alpha[t;n]$ are the classical Bernoulli polynomials. In this case, Theorem \ref{t3} reduces to the well-know classical properties of the Hurwitz zeta function.

\subsection{Dirichlet eta function}  For $\alpha(z)=1/(1+z)$ we have $\nu=0$. The corresponding Dirichlet series are 
$$
\Di_\alpha(s,t)=\sum_{n=0}^{\infty} \frac{(-1)^{n}}{(t+n)^s}\quad \text{and}\quad \Di_\alpha(s)=\sum_{n=1}^{\infty} \frac{(-1)^{n-1}}{n^s},
$$
the latter being known as Dirichlet's eta series. They both extend to entire functions in $s$ and their properties are usually studied through their relationship with the Riemann and Hurwitz zeta functions, but all this can be also read from Theorem \ref{t3}. By Remark  \ref{rem-bgs}, the exponential generating function for the corresponding Bernoulli polynomials is 
$$
\sum_{n=0}^{\infty} B_\alpha[n;t]\frac{u^n}{n!}=\frac{e^{tu}}{e^u+1}, 
$$
thus identifying $2B_\alpha[n;t]$ with the classical Euler polynomial $E_n(t)$.

\subsection{Dirichlet L-functions} Let $\chi:\Z\to \Co$ be a Dirichlet character of modulus $k$ and let
$$
\alpha(z)=\frac{1}{1-z^k}\sum_{i=1}^{k}\chi(i)z^{i-1}.
$$
Then, 
$$\Di_\alpha(s,t)=\sum_{n=0}^{\infty} \frac{\chi(n+1)}{(n+t)^s}=L(s,\chi,t),$$
and $L(s,\chi)=L(s,\chi,1)$ is the classical Dirichlet L-series. If $\chi$ is a principal character then $$\chi(1)+\cdots+\chi(k)=\varphi(k)\neq 0$$ (Euler's $\varphi$-function) and therefore $\nu=1$. Otherwise, $\chi(1)+\cdots+\chi(k)= 0$ (by orthogonality of characters) and $\nu=0$. So, the meromorphic extension of $L(s,\chi,t)$ has, for $\chi$ principal, a simple pole at $s=1$  with residue equal to minus the residue of $\alpha(z)$ at $z=1$ (hence equal to $\varphi(k)/k$) and it is otherwise entire.  The statements in Theorem \ref{t3} and Remark \ref{rem-bgs} are classical in this case. For completeness, we record the exponential generating function for the corresponding Bernoulli polynomials 
$$
\sum_{n=0}^{\infty} B_\alpha[n;t]\frac{u^n}{n!}=\frac{u}{e^{ku}-1}\sum_{i=1}^{k}\chi(i)e^{(t-1+i)u}.
$$

\subsection{Lerch zeta function}\label{sec-lerch} For $a\in\Co$ such that $\Imag(a)\geq 0$, the following series was introduced and studied in \cite{LerNot}
 $$\zeta(s,a,t)=\sum_{n=0}^\infty \frac{e^{2\pi i n a}}{(n+t)^s}.$$
The series  $\zeta(s,a)=e^{2\pi i a}\zeta(s,a,1)$ for $a$ real is called the periodic zeta series. One important number-theoretical aspect is that one of the constructions of $p$-adic L-functions \cite{Mor} is using interpolation of Lerch zeta function values. 

The function $\alpha(z)=1/(1-e^{2\pi i a}z)$ is holomorphic in holomorphic in $D(0,1)$ if and only if $\Imag(a)\geq 0$. In this case, the corresponding Dirichlet series is $\Di_\alpha(s,t)=\zeta(s,a,t)$. If $a\in \Z$, then the meromorphic extension of $\zeta(s,a,t)$ is precisely the Hurwitz zeta function. If $a\not \in \Z$ we have $\nu=0$ and the Dirichlet series extends to an entire function.  In this case, the exponential generating function of the corresponding Bernoulli polynomials  is 
$$
\sum_{n=0}^{\infty} B_\alpha[n;t]\frac{u^n}{n!}=\frac{e^{tu}}{1-e^{2\pi i a}e^u}.
$$
Their relationship with the Apostol-Bernoulli polynomials \cites{ApoLer, BoyApo} $\beta_n(t,e^{2\pi i n a})$ is the following
$$
\beta_{n+1}(t,e^{2\pi i n a})=-(n+1)B_\alpha[n;t],\quad n\geq 0.
$$
Under the change of variable $e^{2\pi i a}=w$ the Lerch zeta function is known as the Lerch transcendent \cite{EMOT} and arises as the meromorphic continuation of the series  $$\Phi(s,w,t)=\sum_{n=0}^\infty \frac{w^n}{(n+t)^s}.$$  
The condition $\Imag(a)\geq 0$ is equivalent to $|w|\leq 1$. The corresponding generating series is  $\alpha(z)=1/(1-wz)$. Again, for $w=1$, one goes back to the case of the Hurwitz zeta function. For $|w|\leq 1$, $w\neq 1$, the series $\Di_\alpha(s,t)$ extends to an entire function whose values at negative integers are given as $\Di_\alpha(-n, t)=B_\alpha[n;t]$, $n\geq 0$ where
$$
\sum_{n=0}^{\infty} B_\alpha[n;t]\frac{u^n}{n!}=\frac{e^{tu}}{1-we^u}.
$$
The coefficients of the polynomials $B_\alpha[n;t]$ can be qualitatively described: $B_\alpha[0;t]=1/(1-w)$ and $B_\alpha[n;0]$, $n>0$, is a polynomial in $w$ with rational coefficients. This, together with Proposition \ref{prop-top} implies that the coefficients of $B_\alpha[n;t]$ are polynomials in $w$, except for the top degree coefficient which equals $1/(1-w)$.

The polylogarithm is defined as  $\Li_s(w)= w\Phi(s,w,1)$ and it arises as the meromorphic continuation of the series 
$$\Li_s(w)=\sum_{n=1}^\infty \frac{w^n}{n^s}.$$ 
The polylogarithm, whose history goes back to Euler,  features prominently in the theory of motives and algebraic cycles but it is also ubiquitous throughout several mathematical and physical contexts (see e.g. \cites{Car, Lew, Mil, Oes, Zag}).
It is important to mention that the Lerch zeta function has also a continuation (with possible branch point singularities) for $\Imag(a)<0$ (or, equivalently, $|w|>1$). See, for example, \cites{LL, LL3} (also for historical context and commentary on previous work) where the continuation is established with the help of the four term functional equation \cite{LerNot}. However, for $|w|>1$ the corresponding function is no longer locally  represented by a Dirichlet series (e.g. $\Li_1(w)=-\ln(1-w)$ is no longer represented by a Dirichlet series anywhere outside $|w|<1$).

\subsection{Barnes zeta function} Let $\nu\geq 1$ and $\ba\in \Re^\nu_{>0}$. The Barnes zeta function \cite{Bar} arises as the meromorphic continuation of the series 
$$
\zeta_\ba (s, t) = \sum_{\mathbf{m} \in \Z_{\geq 0}^\nu} \frac{1}{\left(t + \<\mathbf{m}, \ba\> \right)^s}.
$$
For $\ba\in \Z^\nu_{>0}$ the series is a Dirichlet series. Assume that $\ba\in \Z^\nu_{>0}$ and let
$$
\alpha(z)=\prod_{i=1}^\nu\frac{1}{1-z^{a_i}}=\sum_{n=0}^\infty p_\ba(n)z^n.
$$
The function $p_\ba(n)$ is usually called a (partial) partition function. With this notation, 
$$
\Di_\alpha(s,t)= \sum_{n=0}^\infty \frac{p_\ba(n)}{\left(t +n \right)^s}=\zeta_\ba (s, t) .$$
The function $\alpha(z)$ has a pole at $z=1$ or order $\nu$. By Theorem \ref{t3}, the location of the poles, the corresponding residues and the values at negative integers are determined by the Bernoulli polynomials which, in this case, are precisely the Barnes-Bernoulli polynomials \cite{Bar} 
$$
\sum_{n=0}^{\infty} B_\alpha[n;t]\frac{u^n}{n!}=\prod_{i=1}^\nu\frac{u}{e^{a_iu}-1}e^{tu}.
$$
The meromorphic continuation, information about the poles, their residues, as well as the values at negative integers are due to Barnes \cite{Bar}, but the results have been revisited and reproved many times (e.g. \cite{RuiBar}). It would be interesting to provide an explicit expression for the polynomial $B_\alpha[\nu-1;t]$. Recall that this polynomial contains all the information about the behavior of $\Di_\alpha(s,t)$ at the poles.

The general case $\ba\in \Re^\nu_{>0}$ can be treated similarly. More precisely, one has to allow $\be \in \Re^\nu_{>0}$ in the definition of a multi-power series.

\subsection{Ehrhart series}
An interesting example of geometric origin is the following. Let $P\subset \Re^d$ be a convex $d$-polytope. Let $a_{n}=|nP\cap \Z^d|$, $n\geq 1$. The  series 
$$
Ehr(z)=1+\sum_{n=1}^{\infty} a_n z^n
$$
 is called the Ehrhart series of $P$. The relationship with the series $\alpha(z)$ is the following
 $$
 Ehr(z)=1+z\alpha(z).
 $$
If $P$ is in addition rational, then $a_n$ is quasi-polynomial in $n$ and for some positive integer $p$ (the smallest positive integer $k$ such that $kP$ is an integral polytope) and some polynomial $g(z)$ of degree at most $p(d+1)$ and constant term $1$ we have
$$
Ehr(z)=\frac{g(z)}{(1-z^p)^{d+1}}.
$$
Therefore, $\alpha(z)$ is a rational function of the form 
$$
\alpha(z)=\frac{h(z)}{(1-z^p)^{d+1}}
$$
and consequently tame. The corresponding Dirichlet series 
$$
\Di_\alpha(s,t)=\sum_{n=0}^\infty \frac{|(n+1)P\cap \Z^d|}{(t+n)^s}\quad \text{and}\quad \Di_\alpha(s)=\sum_{n=1}^\infty \frac{|nP\cap \Z^d|}{n^s}
$$
are subject to Theorem \ref{t3}. It is interesting to remark that for Delzant polytopes the Khovanskii-Pukhlikov theorem \cite{KhPu} expresses $|P\cap \Z^d|$ in terms of the action of a $d$-variable version of the classical Todd operator on the volume polynomial of $P$.

\subsection{Central binomial sums} Let $a_n=\binom{2n}{n}^{-1}$, $n\geq 1$. The corresponding power series $\alpha(z)$ has radius of convergence $4$ so, by Theorem \ref{t3},  the associated Dirichlet series extends to an entire function. The Dirichlet series
$$
\Di_\alpha(s)=\sum_{n=1}^\infty \frac{1}{n^s\binom{2n}{n}}
$$
is referred to in the literature as a central binomial sum. There are very interesting relations between the values $\Di_\alpha(s)$ for $s$ a positive integer and those of the Riemann zeta at positive integers \cites{BB, BBK}. The values at negative integers do not seem to have been investigated yet. From \cite{BBbook}*{pg. 385} we have
$$
\sum_{n=1}^\infty  \frac{(2z)^{2n}}{\binom{2n}{n}}=\frac{z^2}{1-z^2}+\frac{z\arcsin(z)}{(1-z^2)^{3/2}}
$$
which implies that
$$
\alpha(e^u)= \frac{1}{4-e^u}+\frac{4\arcsin(e^{u/2}/2)}{e^{u/2}(4-e^u)^{3/2}}.
$$
The exponential generating series for the Bernoulli polynomials is therefore
$$
\sum_{n=0}^{\infty} B_\alpha[n;t]\frac{u^n}{n!}=\left( \frac{1}{4-e^u}+\frac{4\arcsin(e^{u/2}/2)}{e^{u/2}(4-e^u)^{3/2}}\right) e^{tu}.
$$
According to Theorem \ref{t3}vi) we have $\Di_\alpha(-n)=B_\alpha[n;1]$, $n\geq 0$.

\subsection{Zeta-Dirichlet series} We use this as a generic term for Dirichlet series whose coefficients involve the values of the Riemann zeta function at positive integers. We will only look at one particular situation. 
Let $B_n$ denote  the classical Bernoulli numbers.
Let $a_1=0$ and $\displaystyle a_{n}=-\frac{1}{2}\cdot\frac{(2\pi i)^{n}B_{n}}{n!}$, $n> 1$. By Euler's formula, $a_{n}=0$ for $n$ odd and $a_{n}=\zeta(n)$ for $n$ even. The generating series is
$$
\alpha(z)=-\frac{1}{2}\sum_{n=1}^{\infty} \frac{(-1)^n(2\pi)^{2n}B_{2n}}{(2n)!}z^{2n-1}=-\frac{1}{2}({\pi \cot(\pi z)-1/z}),
$$
which is meromorphic in $\Co$ with only simple poles at $z\in \Z^*$. Therefore, $\alpha(z)$ is tame and, by Theorem \ref{t3}, the associated Dirichlet series 
$$
\Di_\alpha(s,t)=\sum_{n=1}^\infty \frac{\zeta(2n)}{(t+2n-1)^s} \quad \text{and}\quad  \Di_\alpha(s)=\sum_{n=1}^\infty \frac{\zeta(2n)}{(2n)^s} 
$$
extend to  meromorphic functions with only a simple pole at $s=1$ with residue $1/2$. 
The exponential generating series for the Bernoulli polynomials is therefore
$$
\sum_{n=0}^{\infty} B_\alpha[n;t]\frac{u^n}{n!}=\left( \frac{u(\pi e^u\cot(\pi e^u)-1)}{2e^u}\right) e^{tu}.
$$
According to Theorem \ref{t3}vi) we have $\Di_\alpha(-n, t)=B_\alpha[n;t]$, $n\geq 0$.

\subsection{Jacobi polynomials} The Jacobi polynomials  $P_n^{(a,b)}(x)$ are a family of polynomials on the interval $(-1,1)$, orthogonal with respect to the weight function $(1-x)^a(1+x)^b$. They sit high in the hierarchy of classical special orthogonal polynomials, only below the Wilson polynomials. It is arguably possible to obtain results similar to those in this section for Wilson polynomials; we restrict to Jacobi polynomials for technical convenience.

For us, the parameters $a,b\in \Co$ and $x \in (-1,1)$. The generating function for Jacobi polynomials   is given by \cite{AAR}*{Theorem 6.4.2}
$$
\alpha(z)=\sum_{n=0}^{\infty} P_n^{(a,b)}(x)z^n=2^{a+b}R^{-1}(R+1-z)^{-a}(R+1+z)^{-b}, \quad \text{where}\ R=R(x,z)=(z^2-2xz+1)^{1/2}.
$$
We work with the principal branch of the complex power function; the branch cut is made along the negative real axis. The power series has radius of convergence $1$. A routine computation shows that the only singularities of the analytic function represented by $\alpha(z)$ are branch points at $z$ for which $|z|^2=1,~\Real(z)=x$, and $|z|^2=2,~\Real(z)=x\pm 1/2$. Given that $x\in (-1,1)$, $\alpha(z)$ is holomorphic in a neighborhood of $z=1$ and Theorem \ref{t6} implies that the corresponding Dirichlet series
$$
\Di_\alpha(s,t)=\sum_{n=0}^\infty \frac{P_{n}^{(a,b)}(x)}{(t+n)^s} \quad \text{and}\quad  \Di_\alpha(s)=\sum_{n=1}^\infty \frac{P_{n-1}^{(a,b)}(x)}{n^s} 
$$
have holomorphic continuation to $s\in \Co$. If $x\in(-1,1/2]$ then $\alpha(z)$ satisfies the hypotheses of Proposition \ref{prop-ML} and it is therefore tame. In this case, Theorem \ref{t3} allows the computation of the values at negative integers of this Dirichlet series in terms of the associated Bernoulli polynomials. Their exponential generating series is 
$$
\sum_{n=0}^{\infty} B_\alpha[n;t]\frac{u^n}{n!}=2^{a+b}\widetilde{R}^{-1}(\widetilde{R}+1-z)^{-a}(\widetilde{R}+1+z)^{-b} e^{tu}, \quad \text{where}\ \widetilde{R}=\widetilde{R}(x,u)=(e^{2u}-2xe^u+1)^{1/2}.
$$
It would be interesting to investigate whether $\alpha(z)$ remains tame also for $x\in (1/2,1)$.

\section{Other singularities}\label{sec-other}

\subsection{} We briefly discuss some known examples of series $\alpha(z)$ for which the singular point $z=1$ is not a pole and thus fall outside the hypotheses of our main results. In these examples, $z=1$ is either a branch point or a non-isolated singularity. However, in these examples the Dirichlet series $\Di_\alpha(s)$ (and therefore $\Di_\alpha(s,t)$) admits meromorphic continuation to $s\in \Co$. At present there is no general technique that produces such a meromorphic continuation;  it is usually obtained either by directly relating $\Di_\alpha(s)$ to a Dirichlet series of tame type or, especially in arithmetic situations, by making use of functional equations.

\subsection{Branch points} Recall from \S\ref{sec-lerch} that, for fixed $s_0\in \Co$, the series $$\Phi(s_0,z,1)=\sum_{n=0}^\infty \frac{z^n}{(n+1)^{s_0}}=\frac{Li_{s_0}(z)}{z}$$
has radius of convergence $1$ and is also convergent for $|z|=1$ if $s_0\in \Co_{1^+}$. The series extends to a holomorphic  function in $z\in \Co$ with at most a singularity at $z=1$ \cites{GS, LL3}. More precisely,  for $s_0 \in \Z_{\leq 0}$, $\Phi(s_0,z,1)$ is a rational function in $z$ with a simple pole at $z=1$ (see \S\ref{sec-lerch} or \cite{LL3}*{Theorem 2.5}). For all other values of $s_0$,  $z=1$ is a branch point so its maximal domain of holomorphy is not included in the complex plane. However, one can consider a  maximal domain of holomorphy inside the complex plane by making a branch cut (e.g. along $[1,\infty)$). The monodromy around $z=1$ is described in  \cite{LL3}*{Theorem 2.4}.

For $\alpha(z)=\Phi(s_0,z,1)$, the corresponding Dirichlet series
$$
\Di_\alpha(s)=\sum_{n=1}^{\infty} \frac{n^{-s_0}}{n^s}=\zeta(s+s_0)
$$
has meromorphic extension to $s\in \Co$ with a simple pole at $s=1-s_0$. It is interesting to note that the location of the pole falls within $\Z_{>0}$ (as in Theorems \ref{t3}, \ref{t7}, \ref{t6}) precisely when the singularity of $\alpha(z)$ at  $z=1$ is a pole. When the pole of $\Di_\alpha(s)$ falls in $\Z_{\leq 0}$, there is special monodromy around the branch point $z=1$. Overall, it is not yet clear whether the location of the poles of $\Di_\alpha(s)$ can be read from the monodromy around $z=1$, or other invariants have to be considered.

Another example of the same nature that should play a role in evaluating this phenomenon is the following. Consider $\alpha(z)=-\sum_{n=0}^{\infty}\ln(n+1)z^n$. It can be shown that $z=1$ is a branch point and $\alpha(z)$ has no other singularities in $D(0,2\pi)$. The corresponding Dirichlet series
$$
\Di_\alpha(s)=-\sum_{n=1}^{\infty} \frac{\ln(n)}{n^s}=\zeta^\prime(s)
$$
has meromorphic extension to $s\in \Co$ with a double pole at $s=1$. In this case, the location of the pole falls within $\Z_{>0}$ but the pole is no longer simple.

\subsection{Circle cuts} Another important situation is when $z=1$ is not an isolated singularity for $\alpha(z)$. A particular case consists of power series for which the unit circle is the natural boundary.
The following result, due to Fritz Carlson \cite{Carl} (see also \cite{Pol}), will be useful.
\begin{Thm}\label{t8}
Let $\alpha(z)$ be a power series with radius of convergence $1$ and integer coefficients. Then either $\alpha(z)$  has the unit circle as its natural boundary, or $\alpha(z)$ can be represented as a rational function of the form
$$
\frac{P(z)}{(1-z^p)^q},
$$
for some $P(z)\in \Z[z]$ and $p,q\in \Z_{>0}$.
\end{Thm}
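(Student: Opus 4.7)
The plan is to combine two classical results: the P\'olya--Carlson theorem on integer-coefficient power series, and the Fatou--Kronecker description of rational power series with integer coefficients. Suppose $\alpha(z)$ does not have the unit circle as its natural boundary; equivalently, $\alpha$ admits analytic continuation across some open arc of $|z|=1$. The goal is then to show that $\alpha(z)$ is rational of the required form $P(z)/(1-z^p)^q$.

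The first step is the rationality assertion of P\'olya--Carlson: under the above hypotheses, $\alpha(z)$ must be a rational function. The standard proof uses the Hankel determinants $H_n=\det(a_{i+j})_{0\le i,j\le n}$. By Kronecker's rationality criterion, $\alpha$ is rational if and only if $H_n=0$ for all sufficiently large $n$; if not, infinitely many $H_n$ are nonzero integers and hence $|H_n|\ge 1$. The assumed analytic continuation across an arc, combined with a contour-deformation/polynomial-approximation argument producing good uniform estimates for $\alpha$ on a domain strictly containing the closed unit disk outside a thin sliver, yields a competing upper estimate $|H_n|\to 0$, a contradiction. I expect this quantitative approximation estimate for $H_n$ to be the main obstacle in the argument.

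Once $\alpha=P/Q$ is known to be rational with $\gcd(P,Q)=1$, I would invoke Fatou's lemma: the integrality of $(a_n)$ together with the linear recurrence imposed by $Q$ forces, after normalizing so that $Q(0)=1$, that $P,Q\in\Z[z]$. Since the radius of convergence equals $1$, no root of $Q$ lies in $|z|<1$ and at least one lies on $|z|=1$. The reciprocal polynomial $\widetilde Q(z):=z^{\deg Q}Q(1/z)\in\Z[z]$ has leading coefficient $Q(0)=1$, so it is monic; hence each $1/z_i$ (where $z_i$ ranges over the roots of $Q$) is an algebraic integer of modulus at most $1$, and so are all its Galois conjugates, since they are of the form $1/z_j$ for other roots $z_j$ of $Q$. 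Kronecker's theorem on algebraic integers whose conjugates all lie in the closed unit disk forces each $1/z_i$, and therefore each $z_i$, to be a root of unity.

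Let $p$ be a common multiple of the orders of the $z_i$ and $q$ the maximum multiplicity of a root of $Q$. Then $Q(z)$ divides $(1-z^p)^q$ in $\Co[z]$; since both polynomials lie in $\Z[z]$ with constant term $1$, the cofactor $R(z)=(1-z^p)^q/Q(z)$ also lies in $\Z[z]$ by Gauss's lemma. Setting the new numerator to be $P(z)R(z)\in\Z[z]$ gives $\alpha(z)=P(z)R(z)/(1-z^p)^q$, which is the claimed form.
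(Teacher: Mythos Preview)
The paper does not prove this theorem; it is stated as a classical result due to Fritz Carlson (with a reference also to P\'olya) and is used as a black box in the subsequent discussion. So there is no ``paper's own proof'' to compare against.

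Your sketch is the standard route to Carlson's theorem and is correct in outline. The decomposition into (i) the P\'olya--Carlson rationality step via Hankel determinants and (ii) the Fatou/Kronecker step forcing all poles to be roots of unity is exactly how the result is classically obtained. A few remarks on the details: your identification of the hard part is right---the upper bound $|H_n|\to 0$ under analytic continuation across an arc is the substantive analytic input, and it is indeed nontrivial (P\'olya's original argument uses a conformal map or a Tschebyscheff-type polynomial construction to push the contour out over the arc; this is where the actual work lies, and your proposal only gestures at it). The algebraic part is clean: once $\alpha=P/Q$ with $\gcd(P,Q)=1$ and $Q(0)=1$, Fatou's lemma gives $P,Q\in\Z[z]$; holomorphy in $D(0,1)$ forces $|z_i|\ge 1$ for every root $z_i$ of $Q$; the reciprocal polynomial $\widetilde Q$ is monic in $\Z[z]$, so the $1/z_i$ are algebraic integers with all conjugates in the closed unit disk, and Kronecker's theorem makes them roots of unity. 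Your final Gauss's-lemma step is fine since $Q$ is primitive (its constant term is $1$), so $Q\mid (1-z^p)^q$ in $\Z[z]$ and the cofactor lies in $\Z[z]$.

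In short: the paper cites the theorem without proof, and your proposal correctly reconstructs the classical argument, with the analytic Hankel-determinant estimate honestly flagged as the step requiring real work.
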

In the discussion below, we will largely restrict to the case of power series with integral coefficients and therefore will be concerned with power series with the unit circle as a cut. If $\alpha_1(z)$ is of this type and $\alpha_2(z)$ has $z=1$ as an isolated singularity then $\alpha_1(z)+\alpha_2(z)$ has again the unit circle as a cut. Therefore, the question of understanding when does $\Di_\alpha(s)$ have meromorphic continuation to $s\in \Co$  hinges in part on the ability to characterize  $\alpha(z)$, with the unit circle as its natural boundary, for which $\Di_\alpha(z)$ is entire.

\subsubsection{Dedekind zeta function} Let $k$ be an algebraic number field of degree $N$ and $$\zeta_k(s)=\sum_{n=1}^\infty \frac{a_n}{n^s}$$ 
the corresponding Dedekind zeta function; the coefficient $a_n$ counts the number of ideals of norm $n$ in the  integer ring of $k$. The series converges for $\Real(s)>1$. Its meromorphic continuation to $s\in \Co$ is due to Hecke (see e.g. \cite{Hec}) and it is based on the existence of a functional equation. The resulting  function has only a simple pole at $s=1$. The residue is computed by the analytic class number formula (in this form due to Dedekind)
$$
\Res(\zeta_k(s), 1)=\frac{2^{r_1}(2\pi)^{r_2}h_k R_k}{w_k\sqrt{|D_k|}}
$$
where $r_1$, $2r_2$ denote the number of real embeddings and complex embeddings of $k$, $h_k$ the class number, $w_k$ the number of roots of unity in $k$, $D_k$ the discriminant, and $R_k$ the regulator. The regulator, in particular, is typically a transcendental real number, defined as a determinant of logarithms of algebraic numbers (units in $O_k$). 

The power series $\alpha(z)=\sum_{n\geq 0}a_{n+1}z^n$ has radius of convergence at least $1$ (since the Dirichlet series has non-empty domain of convergence) and it cannot be holomorphic in a neighborhood of $z=1$ (otherwise the Dirichlet series would be entire). Therefore, the radius of convergence is precisely $1$ and, according to Theorem \ref{t8}, $\alpha(z)$ is either a rational function or has the circle as a cut. If  $\alpha(z)$ is a rational function then, again by Theorem \ref{t8} and Theorem  \ref{t6}, the pole at $z=1$ is simple and the residue of $\zeta_k(s)$ at $s=1$ is a rational number. Therefore, if 
$$\frac{\pi^{r_2}R_k}{\sqrt{|D_k|}}\not\in \Rat$$
(which seems to be always the case if $n>1$), then $\alpha(z)$ has the circle as its natural boundary. To wit, the Dirichlet series has pole that falls in $\Z_{>0}$ but the residue is not what it is expected if it would arise from a rational function.

\subsubsection{Modular forms}  To the sequence $(a_n)_{n\geq 1}$ and $\lambda>0$ we associate the function
$$
f(\tau)=\sum_{n=1}^\infty a_n e^{2\pi i n \tau/\lambda}.
$$
It is clear that $\tau$ belongs to the upper half-plane if and only if  $e^{2\pi i \tau/\lambda}\in D(0,1)$.
The relationship between $f(z)$ and $\alpha(z)$ is the following
$$
f(\tau)=e^{2\pi i  \tau/\lambda}\alpha(e^{2\pi i  \tau/\lambda}).
$$
In particular, $f(\tau)$  is analytic inthe upper half-plane. We assume that $a_n=O(n^c)$, $n\to +\infty$, for some $c\in \Re$. If, in addition, $f(\tau)$ satisfies 
\begin{equation}\label{eq20}
f(-1/\tau)=\gamma(\tau/i)^k f(\tau),
\end{equation}
for some $k>0$ and $\gamma=\pm1$, we say that $f(\tau)$ is a cusp form of weight $k$ and multiplier $\gamma$ and write $f(\tau)\in M_0(\lambda, k, \gamma)$. Hecke \cite{HecUbe} (see \cite{BK} for a detailed account) developed a correspondence between modular forms (the relevant group $G(\lambda)$ being the group generated by the maps $\tau\mapsto -1/\tau$ and $\tau\mapsto \tau +\lambda$) and Dirichlet series. For our situation, the correspondence 
\cite{BK}*{Theorem 2.1} says that \eqref{eq20} is satisfied if and only if $\Phi(s)=(2\pi/\lambda)^{-s}\Gamma(s)\Di_\alpha(s)$ has holomorphic extension to an entire function bounded in each vertical strip and
\begin{equation}
\Phi(s)=\gamma\Phi(k-s).
\end{equation}

Assume that $f(\tau)\in M_0(\lambda, k, \gamma)$.
For $\lambda>2$ the vector space $M_0(\lambda, k, \gamma)$ is infinite dimensional and the standard fundamental region for $G(\lambda)$ has on its boundary the line segments $(-\lambda/2,-1)$ and $(1,\lambda/2)$. Under favorable conditions one can use of the Schwarz reflection principle to extend $f(\tau)$ to a holomorphic function into the lower half-plane, with potential singularities (see \cite{BK}*{Chapter 4} for details). In this case, it is possible that $z=1$ is a regular point for $\alpha(z)$ and it is covered by Theorem \ref{t6}.

On the other hand, for $\lambda< 2$  the vector space $M_0(\lambda, k, \gamma)$ is finite dimensional. The real line is the natural boundary for $f(\tau)$ and therefore the unit circle is the natural boundary for $\alpha(z)$. Indeed, if $f(\tau)$ extends to a neighborhood of a point on the real line then, because any such neighborhood contains a fundamental domain for $G(\lambda)$, $f(\tau)$ extends to a bounded entire function and it is therefore the constant function $0$.  This case thus provides examples of entire Dirichlet series that do not arise from tame power series, as well as elements of $\H$ that are not in $\Tcal t^s$. Nevertheless, any such Dirichlet series will generate  a $\Tcal$-submodule of $\H$.

\subsubsection{Arithmetic convolution} The arithmetic convolution $(\alpha_1*\alpha_2)(z)$ of the power series $\alpha_1(z)$, $\alpha_2(z)$ is the unique series for which $\Di_{\alpha_1*\alpha_2}(s,t)=\Di_{\alpha_1}(s,t)\Di_{\alpha_2}(s,t)$. The following result provides another class of examples of series with the unit circle as a natural boundary for which the associated Dirichlet series admit meromorphic continuation to the complex plane.
\begin{Thm}\label{t9}
Let $\alpha_1(z)$, $\alpha_2(z)$ be power series with non-negative integer coefficients such that their radius of convergence equals $1$ and they represent rational functions. Then,
\begin{enumerate}[label={\roman*)}]
\item $(\alpha_1*\alpha_2)(z)$ has the unit circle as its natural boundary;
\item $\Di_{\alpha_1*\alpha_2}(s,t)$ has meromorphic extension to $s\in\Co$.
\end{enumerate} 
\end{Thm}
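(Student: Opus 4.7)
Each $\alpha_i(z)$ is rational with radius of convergence $1$ and non-negative integer coefficients, so by Pringsheim's theorem $z=1$ is a singular point, and being rational it must be a pole of some order $\nu_i\geq 1$. All singularities of $\alpha_i$ are poles located at roots of unity, so $\alpha_i(z)$ is tame by Proposition \ref{prop-ML}. Part (ii) is then essentially free: Theorem \ref{t3} furnishes the meromorphic continuation of each factor $\Di_{\alpha_i}(s,t)$ to $s\in\Co$, and the defining identity $\Di_{\alpha_1*\alpha_2}(s,t)=\Di_{\alpha_1}(s,t)\Di_{\alpha_2}(s,t)$ promotes the product to a meromorphic function on $\Co$.

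For (i) the plan is to verify the hypotheses of Carlson's theorem (Theorem \ref{t8}) for $\alpha_1*\alpha_2$ and then eliminate the rational alternative using Theorem \ref{t3}. The Dirichlet convolution coefficients $c_n=\sum_{d\mid n}a_{1,d}a_{2,n/d}$ are non-negative integers. An upper bound on the growth of $c_n$ follows from the polynomial growth of $a_{i,n}$ (automatic for a rational function of radius $1$) combined with the standard divisor-sum estimate, yielding radius of convergence at least $1$. For the lower bound, with $d_1$ the smallest index such that $a_{1,d_1}\neq 0$, the inequality $c_{d_1 n}\geq a_{1,d_1}a_{2,n}$ together with $\limsup a_{2,n}^{1/n}=1$ forces the radius to equal exactly $1$. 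Carlson's theorem now applies and gives the dichotomy: either $(\alpha_1*\alpha_2)(z)$ admits the unit circle as its natural boundary, or it is a rational function of the form $P(z)/(1-z^p)^q$.

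To exclude the rational option I argue by contradiction. Such an $\alpha_1*\alpha_2$ is again tame, so Theorem \ref{t3} constrains $\Di_{\alpha_1*\alpha_2}(s,t)$ to have at most simple poles for every $t\in\Re_+$. By Proposition \ref{prop-top}, each Bernoulli polynomial $B_{\alpha_i}[\nu_i-1;t]$ has degree $\nu_i-1$ with non-vanishing top coefficient $(-1)^{\nu_i}k_{\nu_i}$, so it is nonzero outside a finite exceptional set of values of $t$. Choosing $t_0\in\Re_+$ outside both exceptional sets, Theorem \ref{t3}(iii) yields a genuine simple pole at $s=1$ for each of $\Di_{\alpha_1}(s,t_0)$ and $\Di_{\alpha_2}(s,t_0)$. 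The product $\Di_{\alpha_1}(s,t_0)\Di_{\alpha_2}(s,t_0)=\Di_{\alpha_1*\alpha_2}(s,t_0)$ therefore has a pole of order at least two at $s=1$, contradicting the simple-pole conclusion and completing the proof of (i).

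The step I expect to require the most care is the calibration of genericity in $t$: the defining identity has to be applied at a value $t_0$ at which \emph{both} Bernoulli polynomials $B_{\alpha_i}[\nu_i-1;\cdot]$ are non-vanishing, because at exceptional values of $t$ the pole at $s=1$ can collapse and the double-pole conflict evaporates. For instance, at $t=1$ with $\alpha_2(z)=1/(1-z)^2$ one has $B_{\alpha_2}[1;1]=0$ and $\Di_{\alpha_2}(s,1)=\zeta(s-1)$ is regular at $s=1$, which is why working only at the classical value $t=1$ is insufficient in general. Avoiding the finite union of exceptional $t$-values is what makes the contradiction go through.
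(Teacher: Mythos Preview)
Your argument is correct and follows essentially the same route as the paper: establish the hypotheses of Carlson's theorem (Theorem~\ref{t8}) for $\alpha_1*\alpha_2$, then rule out the rational alternative by producing a double pole of $\Di_{\alpha_1*\alpha_2}(s,t_0)$ at $s=1$ for a suitably chosen $t_0$, which Theorem~\ref{t3} forbids for tame series. The only notable difference is how the radius of convergence is pinned down to exactly $1$: you do this by direct coefficient estimates ($c_n$ has polynomial growth, and $c_{d_1 n}\geq a_{1,d_1}a_{2,n}$), whereas the paper argues radius $\geq 1$ from convergence of the Dirichlet series and then invokes the double pole together with Theorem~\ref{t6} to show $z=1$ cannot be a regular point. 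Both arguments are valid; yours is a bit more elementary, the paper's reuses the double-pole observation and avoids the divisor-sum estimate. Your explicit discussion of why a generic $t_0$ (rather than $t=1$) is needed, with the example $\alpha_2(z)=1/(1-z)^2$, is a useful clarification of a point the paper leaves implicit.
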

\begin{proof}
The series $\alpha_1(z)$, $\alpha_2(z)$ must have $z=1$ as a pole. By Theorem \ref{t3} $\Di_{\alpha_1}(s,t)$ and $\Di_{\alpha_2}(s,t)$ have meromorphic continuation to $s\in \Co$ with only simple poles implying that $\Di_{\alpha_1*\alpha_2}(s,t)$ has meromorphic continuation to $s\in \Co$ with at most double poles. In fact, for $t_0$ that is generic for both $\alpha_1(z)$ and $\alpha_2(z)$, both $\Di_{\alpha_1}(s,t)$ and $\Di_{\alpha_2}(s,t)$ have a pole at $s=1$ and therefore $\Di_{\alpha_1*\alpha_2}(s,t_0)$ has a double pole at $s=1$. The series $(\alpha_1*\alpha_2)(z)$ must have radius of convergence at least $1$ (because its Dirichlet series converges in some right half-plane) and cannot be holomorphic in a neighborhood of $z=1$ (otherwise Theorem \ref{t6} forces $\Di_{\alpha_1*\alpha_2}(s,t)$ to be entire). Furthermore, $(\alpha_1*\alpha_2)(z)$ cannot represent a rational function because 
 $\Di_{\alpha_1*\alpha_2}(s,t_0)$ has a double pole at $s=1$. Theorem \ref{t8} now implies that $(\alpha_1*\alpha_2)(z)$ has the unit circle as its natural boundary.
\end{proof}
The same principle applies to the series corresponding to the quotient $\Di_{\alpha_1}(s,t)/\Di_{\alpha_2}(s,t)$ if we have some information about the zeroes of $\Di_{\alpha_2}(s,t)$ (at least for some value of $t$). Results of this type imply that many of the Dirichlet series constructed from arithmetical sequences (Moebius function, Euler $\varphi$-function, divisor functions, etc.) have the unit circle as a cut. If also shows that Dirichlet series $\Di_\alpha(s)$ associated to $\alpha(z)$ with the circle as a cut  do acquire poles that correspond to zeroes of other Dirichlet series. In some instances, these zeroes are expected to lie on vertical lines, hence the poles of $\Di_\alpha(s)$ are expected to lie on vertical lines. There are examples in the literature for which this known to be the case \cites{AMP, Eve}.

One class of Dirichlet series that was extensively studied is the extended Selberg class $S$. Such a Dirichlet series is required to converge for $s\in \Co_{1^+}$, and have meromorphic extension to the complex plane with only a possible pole at $z=1$, satisfy a functional equation of Riemann type, have an Euler product, and satisfy the Ramanujan hypothesis. We refer to \cite{Kac} for a survey of the theory and related developments. An element $F\in S$ is primitive if $F=F_1F_2$, $F_1,F_2\in S$ implies $F_1=1$ or $F_2=1$. As it turns out \cite{CG}, every element of $S$ can be factored into primitive factors and, assuming the Selberg  Orthogonality Conjecture, the factorization is unique up to the order of the factors; furthermore, the Riemann zeta function is the only polar primitive element of $S$. This is consistent,  in particular, with the Dedekind conjecture on the Dedekind zeta function and with the Artin conjecture. Therefore, up to arithmetic convolution, understanding the structure of $S$ depends, as remarked before, on the ability to characterize the power functions with $z=1$ a non-isolated singularity for which Dirichlet series is entire.

\subsection{Non-isolated singularities} Historically, the first examples of series that have the circle of their disk of convergence as the natural boundary were lacunary series. We only point out that in general these might produce Dirichlet series that would fall outside the range of what might be considered interesting. One example is that of the series defined as  $\displaystyle z\alpha(z)=\sum_{p~\text{prime}} z^p$. A classical theorem of Mandelbrojt \cite{Man}*{Theorem 4} applies to show that the series has radius of convergence $1$ and its set of singularities on the unit circle is irreducible. In particular, $z=1$ is a non-isolated singularity. The corresponding Dirichlet series
$$
\Di_\alpha(s)=\sum_{p~\text{prime}} \frac{1}{p^s},
$$
was studied by Landau and Walfisz \cite{LW}. As it turns out, $\Di_\alpha(s)$ has $\Real(s)=0$ as natural boundary.

%%%%%%%%%%%%%%%%%%%%%%%%%%%%%%%%%%%%%%%%%%%%%%%%%%%%%%%%%%%%%%%%%%%%%%%%%%%%%

%%%%%%%%%%%%%%%%%%%%%%%%%%%%%%%%%%%%%%%%%%%%%%%%%%%%%%%%%%%%%%%%%%%%%%%%%%%%%
%%%%%%%%%%%%%%%%%%%%%%%%%%%%%%%%%%%%%%%%%%%%%%%%%%%%%%%%%%%%%%%%%%%%%%%%%%%%%
\begin{bibdiv}
\begin{biblist}[\normalsize]
\BibSpec{article}{%
+{}{\PrintAuthors} {author}
+{,}{ }{title}
+{.}{ \textit}{journal}
+{}{ \textbf} {volume}
+{}{ \PrintDatePV}{date}
+{,}{ no. }{number}
+{,}{ }{pages}
+{,}{ }{status}
+{.}{}{transition}
}

\BibSpec{book}{%
+{}{\PrintAuthors} {author}
+{,}{ \textit}{title}
+{.}{ }{series}
+{,}{ vol. } {volume}
+{,}{ \PrintEdition} {edition}
+{,}{ }{publisher}
+{,}{ }{place}
+{,}{ }{date}
+{,}{ }{status}
+{.}{}{transition}
}

\BibSpec{collection.article}{
+{}{\PrintAuthors} {author}
+{,}{ \textit}{title}
+{.}{ In: \textit}{conference}
+{,}{ }{pages}
+{.}{ }{series}
+{,}{ vol. } {volume}
+{,}{ }{publisher}
+{,}{ }{place}
+{,}{ }{date}
+{,}{ }{status}
+{.}{}{transition}
}

\BibSpec{collection}{
+{}{\PrintAuthors}{editor}
+{,}{ \textit}{title}
+{.}{ In: \textit}{conference}
+{,}{ }{pages}
+{.}{ }{series}
+{,}{ vol. } {volume}
+{,}{ }{publisher}
+{,}{ }{place}
+{,}{ }{date}
+{,}{ }{status}
+{.}{}{transition}
}

\bib{AMP}{article}{
   author={Allouche, J.-P.},
   author={Mend\`es France, M.},
   author={Peyri\`ere, J.},
   title={Automatic Dirichlet series},
   journal={J. Number Theory},
   volume={81},
   date={2000},
   number={2},
   pages={359--373},
   issn={0022-314X},
   review={\MR{1752260}},
   doi={10.1006/jnth.1999.2487},
}

\bib{AAR}{book}{
   author={Andrews, George E.},
   author={Askey, Richard},
   author={Roy, Ranjan},
   title={Special functions},
   series={Encyclopedia of Mathematics and its Applications},
   volume={71},
   publisher={Cambridge University Press, Cambridge},
   date={1999},
   pages={xvi+664},
   isbn={0-521-62321-9},
   isbn={0-521-78988-5},
   review={\MR{1688958}},
   doi={10.1017/CBO9781107325937},
}

\bib{ApoLer}{article}{
   author={Apostol, T. M.},
   title={On the Lerch zeta function},
   journal={Pacific J. Math.},
   volume={1},
   date={1951},
   pages={161--167},
   issn={0030-8730},
   review={\MR{43843}},
}

\bib{AR}{article}{
   author={Askey, R. A.},
   author={Roy, R.},
   title={Gamma function},
   conference={NIST handbook of mathematical functions},
   book={
      publisher={U.S. Dept. Commerce, Washington, DC},
   },
   date={2010},
   pages={135--147},
   review={\MR{2655345}},
}

\bib{BB}{article}{
   author={Bailey, David H.},
   author={Broadhurst, David J.},
   title={Parallel integer relation detection: techniques and applications},
   journal={Math. Comp.},
   volume={70},
   date={2001},
   number={236},
   pages={1719--1736},
   issn={0025-5718},
   review={\MR{1836930}},
   doi={10.1090/S0025-5718-00-01278-3},
}

\bib{BadOpe}{article}{
   author={Bade, William G.},
   title={An operational calculus for operators with spectrum in a strip},
   journal={Pacific J. Math.},
   volume={3},
   date={1953},
   pages={257--290},
   issn={0030-8730},
   review={\MR{0055579}},
}

\bib{Bar}{article}{
   author={Barnes, E. W.},
   title={On the theory of the multiple Gamma function},
   journal={Trans. Cambridge Phil. Soc.},
   volume={19},
   date={1904},
   pages={374--425},
}

\bib{BK}{book}{
   author={Berndt, Bruce C.},
   author={Knopp, Marvin I.},
   title={Hecke's theory of modular forms and Dirichlet series},
   series={Monographs in Number Theory},
   volume={5},
   publisher={World Scientific Publishing Co. Pte. Ltd., Hackensack, NJ},
   date={2008},
   pages={xii+137},
   isbn={978-981-270-635-5},
   isbn={981-270-635-6},
   review={\MR{2387477}},
}

\bib{Beu}{article}{
   author={Beurling, Arne},
   title={An extremal property of the Riemann zeta-function},
   journal={Ark. Mat.},
   volume={1},
   date={1951},
   pages={295--300},
   issn={0004-2080},
   review={\MR{40428}},
   doi={10.1007/BF02591365},
}

\bib{BlaThr}{article}{
   author={Blagouchine, Iaroslav V.},
   title={Three notes on Ser's and Hasse's representations for the
   zeta-functions},
   journal={Integers},
   volume={18A},
   date={2018},
   pages={Paper No. A3, 45},
   review={\MR{3777525}},
}

\bib{BBbook}{book}{
   author={Borwein, Jonathan M.},
   author={Borwein, Peter B.},
   title={Pi and the AGM},
   series={Canadian Mathematical Society Series of Monographs and Advanced
   Texts},
   note={A study in analytic number theory and computational complexity;
   A Wiley-Interscience Publication},
   publisher={John Wiley \& Sons, Inc., New York},
   date={1987},
   pages={xvi+414},
   isbn={0-471-83138-7},
   review={\MR{877728}},
}

\bib{BBK}{article}{
   author={Borwein, Jonathan Michael},
   author={Broadhurst, David J.},
   author={Kamnitzer, Joel},
   title={Central binomial sums, multiple Clausen values, and zeta values},
   journal={Experiment. Math.},
   volume={10},
   date={2001},
   number={1},
   pages={25--34},
   issn={1058-6458},
   review={\MR{1821569}},
}

\bib{BoyApo}{article}{
   author={Boyadzhiev, Khristo N.},
   title={Apostol-Bernoulli functions, derivative polynomials and Eulerian
   polynomials},
   journal={Adv. Appl. Discrete Math.},
   volume={1},
   date={2008},
   number={2},
   pages={109--122},
   issn={0974-1658},
   review={\MR{2450260}},
}

\bib{BV}{article}{
   author={Brion, Michel},
   author={Vergne, Mich\`ele},
   title={Lattice points in simple polytopes},
   journal={J. Amer. Math. Soc.},
   volume={10},
   date={1997},
   number={2},
   pages={371--392},
   issn={0894-0347},
   review={\MR{1415319}},
   doi={10.1090/S0894-0347-97-00229-4},
}

\bib{CI}{article}{
   author={Caginalp, Gunduz},
   author={Ion, Bogdan},
   title={Renormalization and analytic continuation},
   journal={},
   volume={},
   date={2020},
   number={},
   pages={},
   status={in preparation},
}

\bib{Carl}{article}{
     author={Carlson, Fritz},
     title={\" Uber ganzwertige Funktionen}, 
     journal={Math. Z.}, 
     volume={11},
     date={1921}, 
     number={1-2}, 
     pages={1--23},
}

\bib{Car}{article}{
   author={Cartier, Pierre},
   title={Fonctions polylogarithmes, nombres polyz\^{e}tas et groupes
   pro-unipotents},
   language={French, with French summary},
   note={S\'{e}minaire Bourbaki, Vol. 2000/2001},
   journal={Ast\'{e}risque},
   number={282},
   date={2002},
   pages={Exp. No. 885, viii, 137--173},
   issn={0303-1179},
   review={\MR{1975178}},
}

\bib{CG}{article}{
   author={Conrey, J. B.},
   author={Ghosh, A.},
   title={On the Selberg class of Dirichlet series: small degrees},
   journal={Duke Math. J.},
   volume={72},
   date={1993},
   number={3},
   pages={673--693},
   issn={0012-7094},
   review={\MR{1253620}},
   doi={10.1215/S0012-7094-93-07225-0},
}

\bib{Dix}{article}{
   author={Dixit, Anup B.},
   title={A uniqueness property of general Dirichlet series},
   journal={J. Number Theory},
   volume={206},
   date={2020},
   pages={123--137},
   issn={0022-314X},
   review={\MR{4013166}},
   doi={10.1016/j.jnt.2019.06.007},
}

\bib{EMOT}{book}{
   author={Erd\'{e}lyi, Arthur},
   author={Magnus, Wilhelm},
   author={Oberhettinger, Fritz},
   author={Tricomi, Francesco G.},
   title={Higher transcendental functions. Vol. I},
   note={Based on notes left by Harry Bateman;
   With a preface by Mina Rees;
   With a foreword by E. C. Watson;
   Reprint of the 1953 original},
   publisher={Robert E. Krieger Publishing Co., Inc., Melbourne, Fla.},
   date={1981},
   pages={xiii+302},
   isbn={0-89874-069-X},
   review={\MR{698779}},
}

\bib{EveMer}{book}{
   author={Everlove, Corey},
   title={Meromorphic Dirichlet Series},
   note={Thesis (Ph.D.)-- The University of Michigan},
   publisher={ProQuest LLC, Ann Arbor, MI},
   date={2018},
   pages={104},
   isbn={},
}

\bib{Eve}{article}{
   author={Everlove, Corey},
   title={Dirichlet series associated to sum-of-digits functions},
   journal={},
   volume={},
   date={2018},
   number={},
   pages={},
   status={arXiv: 1807.07890},
}

\bib{GS}{article}{
   author={Guillera, Jes\'{u}s},
   author={Sondow, Jonathan},
   title={Double integrals and infinite products for some classical
   constants via analytic continuations of Lerch's transcendent},
   journal={Ramanujan J.},
   volume={16},
   date={2008},
   number={3},
   pages={247--270},
   issn={1382-4090},
   review={\MR{2429900}},
   doi={10.1007/s11139-007-9102-0},
}

\bib{Ham1}{article}{
   author={Hamburger, Hans},
   title={\"{U}ber die Riemannsche Funktionalgleichung der $\xi$-Funktion},
   language={German},
   journal={Math. Z.},
   volume={10},
   date={1921}, 
   number={3--4},
   pages={240--254}, 
   issn={0025-5874},
   review={\MR{1544538}},
   doi={10.1007/BF01485292},
}

\bib{Ham2}{article}{
   author={Hamburger, Hans},
   title={\"{U}ber die Riemannsche Funktionalgleichung der $\xi$-Funktion},
   language={German},
   journal={Math. Z.},
   volume={11},
   date={1922}, 
   number={3--4},
   pages={224--245},

}

\bib{Ham3}{article}{
   author={Hamburger, Hans},
   title={\"{U}ber die Riemannsche Funktionalgleichung der $\xi$-Funktion},
   language={German},
   journal={Math. Z.},
   volume={13},
   date={1922},
   number={1},
   pages={283--311},
   issn={0025-5874},
   review={\MR{1544538}},
   doi={10.1007/BF01485292},
}

\bib{HasEin}{article}{
   author={Hasse, Helmut},
   title={Ein Summierungsverfahren f\"ur die Riemannsche $\zeta$-Reihe},
   language={German},
   journal={Math. Z.},
   volume={32},
   date={1930},
   number={1},
   pages={458--464},
   issn={0025-5874},
   review={\MR{1545177}},
}

\bib{HecUbe}{article}{
   author={Hecke, E.},
   title={\"{U}ber die Bestimmung Dirichletscher Reihen durch ihre
   Funktionalgleichung},
   language={German},
   journal={Math. Ann.},
   volume={112},
   date={1936},
   number={1},
   pages={664--699},
   issn={0025-5831},
   review={\MR{1513069}},
   doi={10.1007/BF01565437},
}

\bib{HecEP}{article}{
   author={Hecke, E.},
   title={\"{U}ber Modulfunktionen und die Dirichletschen Reihen mit Eulerscher
   Produktentwicklung. I},
   language={German},
   journal={Math. Ann.},
   volume={114},
   date={1937},
   number={1},
   pages={1--28},
   issn={0025-5831},
   review={\MR{1513122}},
   doi={10.1007/BF01594160},
}

\bib{Hec}{book}{
   author={Hecke, Erich},
   title={Lectures on the theory of algebraic numbers},
   series={Graduate Texts in Mathematics},
   volume={77},
   note={Translated from the German by George U. Brauer, Jay R. Goldman and
   R. Kotzen},
   publisher={Springer-Verlag, New York-Berlin},
   date={1981},
   pages={xii+239},
   isbn={0-387-90595-2},
   review={\MR{638719}},
}

\bib{HPFun}{book}{
   author={Hille, Einar},
   author={Phillips, Ralph S.},
   title={Functional analysis and semi-groups},
   note={Third printing of the revised edition of 1957;
   American Mathematical Society Colloquium Publications, Vol. XXXI},
   publisher={American Mathematical Society, Providence, R. I.},
   date={1974},
   pages={xii+808},
   review={\MR{0423094}},
}

\bib{HirTop}{book}{
   author={Hirzebruch, Friedrich},
   title={Topological methods in algebraic geometry},
   series={Classics in Mathematics},
   note={Translated from the German and Appendix One by R. L. E.
   Schwarzenberger;
   With a preface to the third English edition by the author and
   Schwarzenberger;
   Appendix Two by A. Borel;
   Reprint of the 1978 edition},
   publisher={Springer-Verlag, Berlin},
   date={1995},
   pages={xii+234},
   isbn={3-540-58663-6},
   review={\MR{1335917}},
}

\bib{KanPol}{article}{
   author={Kaneko, Masanobu},
   title={Poly-Bernoulli numbers},
   language={English, with English and French summaries},
   journal={J. Th\'{e}or. Nombres Bordeaux},
   volume={9},
   date={1997},
   number={1},
   pages={221--228},
   issn={1246-7405},
   review={\MR{1469669}},
}

\bib{LL}{article}{
   author={Lagarias, Jeffrey C.},
   author={Li, Wen-Ching Winnie},
   title={The Lerch zeta function II. Analytic continuation},
   journal={Forum Math.},
   volume={24},
   date={2012},
   number={1},
   pages={49--84},
   issn={0933-7741},
   review={\MR{2879971}},
   doi={10.1515/form.2011.048},
}

\bib{LL3}{article}{
   author={Lagarias, Jeffrey C.},
   author={Li, Wen-Ching Winnie},
   title={The Lerch zeta function III. Polylogarithms and special values},
   journal={Res. Math. Sci.},
   volume={3},
   date={2016},
   pages={Paper No. 2, 54},
   issn={2522-0144},
   review={\MR{3465529}},
   doi={10.1186/s40687-015-0049-2},
}

\bib{Kac}{article}{
   author={Kaczorowski, Jerzy},
   title={Axiomatic theory of $L$-functions: the Selberg class},
   conference={Analytic number theory},
   book={
      series={Lecture Notes in Math.},
      volume={1891},
      publisher={Springer, Berlin},
   },
   date={2006},
   pages={133--209},
   review={\MR{2277660}},
   doi={10.1007/978-3-540-36364-4_4},
}

\bib{KMP}{article}{
   author={Kaczorowski, Jerzy},
   author={Molteni, Giuseppe},
   author={Perelli, Alberto},
   title={A converse theorem for Dirichlet $L$-functions},
   journal={Comment. Math. Helv.},
   volume={85},
   date={2010},
   number={2},
   pages={463--483},
   issn={0010-2571},
   review={\MR{2595186}},
   doi={10.4171/CMH/202},
}

\bib{LW}{article}{
   author={Landau, E.},
   author={Walfisz, A.},
   title={\" Uber die {N}ichtfortsetzbarkeit einiger durch {D}irichletsche {R}eihen definierter {Funktionen}},
   language={German},
   journal={Rendiconti di Palermo},
   volume={44},
   date={1919},
   number={},
   pages={82--86},
}

\bib{Lew}{collection}{
   title={Structural properties of polylogarithms},
   series={Mathematical Surveys and Monographs},
   volume={37},
   editor={Lewin, Leonard},
   publisher={American Mathematical Society, Providence, RI},
   date={1991},
   pages={xviii+412},
   isbn={0-8218-1634-9},
   review={\MR{1148371}},
   doi={10.1090/surv/037},
}

\bib{LerNot}{article}{
   author={Lerch, M.},
   title={Note sur la fonction ${\germ K} \left( {w,x,s} \right) =
   \sum\limits_{k = 0}^\infty {\frac{{e^{2k\pi ix} }}{{\left( {w + k}
   \right)^s }}} $},
   language={French},
   journal={Acta Math.},
   volume={11},
   date={1887},
   number={1-4},
   pages={19--24},
   issn={0001-5962},
   review={\MR{1554747}},
   doi={10.1007/BF02418041},
}

\bib{Man}{article}{
   author={Mandelbrojt, S.},
   title={Sur la D\'{e}finition des Fonctions Analytiques},
   language={French},
   journal={Acta Math.},
   volume={45},
   date={1925},
   number={1},
   pages={129--143},
   issn={0001-5962},
   review={\MR{1555193}},
   doi={10.1007/BF02395469},
}

%\bib{Mat}{article}{
%   author={Matsuoka, Yasushi},
%   title={On the values of a certain Dirichlet series at rational integers},
%   journal={Tokyo J. Math.},
%   volume={5},
%   date={1982},
%   number={2},
%   pages={399--403},
%   issn={0387-3870},
%   review={\MR{688132}},
%   doi={10.3836/tjm/1270214900},
%}

\bib{Mil}{article}{
   author={Milnor, John},
   title={On polylogarithms, Hurwitz zeta functions, and the Kubert
   identities},
   journal={Enseign. Math. (2)},
   volume={29},
   date={1983},
   number={3-4},
   pages={281--322},
   issn={0013-8584},
   review={\MR{719313}},
}

\bib{M-LSur}{article}{
   author={Mittag-Leffler, G.},
   title={Sur la repr\'{e}sentation analytique des fonctions monog\`enes
   uniformes d'une variable ind\'{e}pendante},
   language={French},
   journal={Acta Math.},
   volume={4},
   date={1884},
   number={1},
   pages={1--79},
   issn={0001-5962},
   review={\MR{1554629}},
   doi={10.1007/BF02418410},
}

\bib{Mor}{article}{
   author={Morita, Yasuo},
   title={On the Hurwitz-Lerch $L$-functions},
   journal={J. Fac. Sci. Univ. Tokyo Sect. IA Math.},
   volume={24},
   date={1977},
   number={1},
   pages={29--43},
   issn={0040-8980},
   review={\MR{441924}},
}

\bib{MS}{article}{
   author={Murty, M. Ram},
   author={Sinha, Kaneenika},
   title={Multiple Hurwitz zeta functions},
   conference={Multiple Dirichlet series, automorphic forms, and analytic
      number theory},
   book={
      series={Proc. Sympos. Pure Math.},
      volume={75},
      publisher={Amer. Math. Soc., Providence, RI},
   },
   date={2006},
   pages={135--156},
   review={\MR{2279934}},
   doi={10.1090/pspum/075/2279934},
}

\bib{Oes}{article}{
   author={Oesterl\'{e}, Joseph},
   title={Polylogarithmes},
   language={French, with French summary},
   note={S\'{e}minaire Bourbaki, Vol. 1992/93},
   journal={Ast\'{e}risque},
   number={216},
   date={1993},
   pages={Exp. No. 762, 3, 49--67},
   issn={0303-1179},
   review={\MR{1246392}},
}

\bib{ParInc}{article}{
   author={Paris, R. B.},
   title={Incomplete gamma and related functions},
   conference={NIST handbook of mathematical functions},
   book={
      publisher={U.S. Dept. Commerce, Washington, DC},
   },
   date={2010},
   pages={175--192},
   review={\MR{2655348}},
}

\bib{Per}{article}{
   author={Perelli, Alberto},
   title={General $L$-functions},
   journal={Ann. Mat. Pura Appl. (4)},
   volume={130},
   date={1982},
   pages={287--306},
   issn={0003-4622},
   review={\MR{663975}},
   doi={10.1007/BF01761499},
}

\bib{Pol}{article}{
   author={P\'{o}lya, G.},
   title={\"{U}ber gewisse notwendige Determinantenkriterien f\"{u}r die
   Fortsetzbarkeit einer Potenzreihe},
   language={German},
   journal={Math. Ann.},
   volume={99},
   date={1928},
   number={1},
   pages={687--706},
   issn={0025-5831},
   review={\MR{1512473}},
   doi={10.1007/BF01459120},
}

\bib{KhPu}{article}{
   author={Pukhlikov, A. V.},
   author={Khovanski\u{\i}, A. G.},
   title={The Riemann-Roch theorem for integrals and sums of
   quasipolynomials on virtual polytopes},
   language={Russian, with Russian summary},
   journal={Algebra i Analiz},
   volume={4},
   date={1992},
   number={4},
   pages={188--216},
   issn={0234-0852},
   translation={
      journal={St. Petersburg Math. J.},
      volume={4},
      date={1993},
      number={4},
      pages={789--812},
      issn={1061-0022},
   },
   review={\MR{1190788}},
}

\bib{RO}{article}{
   author={Roy, R.},
   author={Olver, F. W. J.},
   title={Elementary functions},
   conference={NIST handbook of mathematical functions},
   book={
      publisher={U.S. Dept. Commerce, Washington, DC},
   },
   date={2010},
   pages={103--134},
   review={\MR{2655344}},
}

\bib{RuiBar}{article}{
   author={Ruijsenaars, S. N. M.},
   title={On Barnes' multiple zeta and gamma functions},
   journal={Adv. Math.},
   volume={156},
   date={2000},
   number={1},
   pages={107--132},
   issn={0001-8708},
   review={\MR{1800255}},
   doi={10.1006/aima.2000.1946},
}

\bib{Sel}{article}{
   author={Selberg, Atle},
   title={Old and new conjectures and results about a class of Dirichlet
   series},
   conference={Proceedings of the Amalfi Conference on Analytic Number Theory (Maiori,1989)
   },
   book={
      publisher={Univ. Salerno, Salerno},
   },
   date={1992},
   pages={367--385},
   review={\MR{1220477}},
}

\bib{SerSur}{article}{
	author={Ser, Joseph}, 
	title={Sur une expression de la fonction $\zeta(s)$ de Riemann}, 
	journal={C. R. Acad. Sci. Paris (2)}, 
	volume={182},
	date={1926}, 
	pages={1075--1077},
}

\bib{StaEnu}{book}{
   author={Stanley, Richard P.},
   title={Enumerative combinatorics. Vol. 2},
   series={Cambridge Studies in Advanced Mathematics},
   volume={62},
   note={With a foreword by Gian-Carlo Rota and appendix 1 by Sergey Fomin},
   publisher={Cambridge University Press, Cambridge},
   date={1999},
   pages={xii+581},
   isbn={0-521-56069-1},
   isbn={0-521-78987-7},
   review={\MR{1676282}},
   doi={10.1017/CBO9780511609589},
}

\bib{Wei}{article}{
   author={Weil, Andr\'{e}},
   title={\"{U}ber die Bestimmung Dirichletscher Reihen durch
   Funktionalgleichungen},
   language={German},
   journal={Math. Ann.},
   volume={168},
   date={1967},
   pages={149--156},
   issn={0025-5831},
   review={\MR{207658}},
   doi={10.1007/BF01361551},
}

\bib{WidLap}{book}{
   author={Widder, David Vernon},
   title={The Laplace Transform},
   series={Princeton Mathematical Series, v. 6},
   publisher={Princeton University Press, Princeton, N. J.},
   date={1941},
   pages={x+406},
   review={\MR{0005923}},
}

\bib{WorStu}{article}{
   author={Worpitzky, J.},
   title={Studien \"{u}ber die Bernoullischen und Eulerschen Zahlen},
   language={German},
   journal={J. Reine Angew. Math.},
   volume={94},
   date={1883},
   pages={203--232},
   issn={0075-4102},
   review={\MR{1579945}},
   doi={10.1515/crll.1883.94.203},
}

\bib{Zag}{article}{
   author={Zagier, Don},
   title={The dilogarithm function},
   conference={Frontiers in number theory, physics, and geometry. II},
   book={
      publisher={Springer, Berlin},
   },
   date={2007},
   pages={3--65},
   review={\MR{2290758}},
   doi={10.1007/978-3-540-30308-4_1},
}

\end{biblist}
\end{bibdiv}

\end{document}